\newtheorem{theorem}{Theorem}[section]
\newtheorem{prop}[theorem]{Proposition}
\newtheorem{cor}[theorem]{Corollary}
\newtheorem{lemma}[theorem]{Lemma}
\newtheorem{algorithm}[theorem]{Algorithm}
\newtheorem{remark}[theorem]{Remark}
\newtheorem{define}[theorem]{Definition}
\newtheorem{example}[theorem]{Example}
\newtheorem{condition}[theorem]{Condition}
\def\citet{\cite}
\newcommand{\comment}[1]{}
\def\cal{\mathcal}
\newcommand{\rank}{{\sf rank}}
\newcommand{\jac}{{\sf Jac}}
\newcommand{\RR}{{\mathbb R}}
\newcommand{\NN}{{\mathbb N}}
\newcommand{\CC}{{\mathbb C}}
\newcommand{\mylinebreak}{\hfill\par\vspace{1.0ex}

\def\ow{o\kern-.42em\raise.82ex\hbox{
  \vrule width .12em height .0ex depth .075ex \kern-0.16em \char'56}\kern-.07em}
\def\OW{O\kern-.460em\raise1.36ex\hbox{
\vrule width .13em height .0ex depth .075ex \kern-0.16em \char'56}\kern-.07em}

\noindent\hspace*{1em}}
\newcommand{\crit}{{\sf Crit}}
\newcommand{\sing}{{\sf Sing}}
\newcommand{\bfz}{\mathbf{0}}
\newcommand{\bfV}{\mathbf{V}}
\newcommand{\bfB}{\mathbf{B}}
\newcommand{\SSn}{\mathcal{S}^{n\times n}_{++}}
\newcommand{\Sn}{\mathcal{S}^{n\times n}}
\newcommand{\ud}{\mbox{\upshape d}}
\newcommand{\td}{\tilde}
\newcommand{\wt}{\widetilde}
\numberwithin{equation}{section}
\newenvironment{proof}{\text{\it Proof. }}{\hfill $\square$\par\hfill}
\def\ps@pprintTitle{%
 \let\@oddhead\@empty
 \let\@evenhead\@empty
 \def\@oddfoot{\centerline{\thepage}}%
 \let\@evenfoot\@oddfoot}
\begin{document}

\begin{frontmatter}
\title{On types of degenerate critical points of real polynomial functions\tnoteref{t1}}
\tnotetext[t1]{The first author was supported by
the Chinese National Natural Science Foundation under grants 11401074, 11571350.
The second author was partially supported by Vietnam National Foundation for Science and Technology Development (NAFOSTED), grant 101.04-2016.05.}

\author[1]{Feng Guo}
\ead{fguo@dlut.edu.cn}

\author[2]{Ti{\' {\^ e}}n-So\kern-.42em\raise.82ex\hbox{ \vrule width .12em height .0ex depth .075ex \kern-0.16em \char'56}\kern-.07em n Ph\d{a}m}
\ead{sonpt@dlu.edu.vn}

\address[1]{School of Mathematical Sciences,\\
Dalian  University of Technology,
Dalian, 116024, China}

\address[2]{Department of Mathematics, \\
University of Dalat,
1 Phu Dong Thien Vuong, Dalat, Viet Nam}

\begin{abstract}
	In this paper, we consider the problem of identifying the type
	(local minimizer, maximizer or saddle point) of a given
	isolated real critical point $c$, which is degenerate,
	of a multivariate polynomial function $f$.
	To this end, we introduce the definition of faithful radius of $c$
	by means of the curve of tangency of $f$. We show that
	the type of $c$ can be determined by the global extrema
	of $f$ over the Euclidean ball centered at $c$ with a faithful radius.
	We propose algorithms to compute a faithful radius of
	$c$ and determine its type.
\end{abstract}

\begin{keyword}
polynomial functions, critical points, degenerate, types, tangency varieties
\MSC[2010] 65K05, 68W30
\end{keyword}

\date{}

\end{frontmatter}

%
%
%
%
%
%

\section{Introduction}
Let $f\in\RR[X]:=\RR[X_1,\ldots,X_n]$, the polynomial ring over the real number field
$\RR$ with $n$ variables.
Throughout this paper, we denote upper case letters (like $X, Y$) as variables
and lower case letters (like $x, y$) as points in the ambient spaces.
Denote $\bfz$ as the origin or the vector of zeros.
Given $c\in\RR^n$ such that the gradient
$\nabla f(c)=\bfz$ and the Hessian matrix
$\nabla^2 f(c)$ is singular, i.e. $c$ is a degenerate real critical
point of $f$.
An interesting problem is to identify the type of $c$, i.e. is $c$ a local minimizer,
maximizer or saddle point of $f$?
To solve it, it is intuitive to consider the higher order partial derivatives of $f$ at $c$.
However, to the best of our knowledge, it is difficult to obtain a straightforward and simple
method, which takes into account the higher order derivatives of $f$,
to systematically solve this problem. When $f$ is a sufficiently smooth function
(not necessarily a polynomial), some partial answers to this problem were given in
\cite{Bolis1980,Cushing1975} under certain assumptions on its Taylor expansion at $c$.
When $f$ is a multivariate real polynomial, Qi investigated
its critical points and extrema structures in \cite{Qi2004} without giving a computable
method to determine their types. Nie gave a numerical method in \cite{Nie2015} to compute
all $H$-minimizers (critical points at which the Hessian matrices are positive semidefinite)
of a polynomial by semidefinite relaxations. However, there is no completed procedure in
\cite{Nie2015} to verified that a $H$-minimizer is a saddle point.

Without loss of generality, we
suppose that $c=\bfz$ and $f(\bfz)=0$.
In this paper, we consider the case when
\begin{center}
{\itshape $\bfz$ is an {\bf\itshape isolated} real critical point of $f$},
\end{center}
i.e., there exists a
neighborhood $\mathcal{O}\subseteq\RR^n$ of $\bfz$ such that $\bfz$ is the only real
critical point of $f$ in $\mathcal{O}$. It is well known that any small changes
of the coefficients of $f$ may render $\bfz$ nondegenerate.
Hence, we aim to present a computable and symbolic method
to determine the type of $\bfz$.


Now let us briefly introduce the basic idea we use
to deal with this problem and the contribution made in this paper.
Denote $\RR_+$ as the set of positive real numbers and $\Vert x\Vert_2$ as the
Euclidean norm of $x\in\RR^n$. For any $r\in\RR_+$,
let
\begin{equation}\label{eq::BS}
	\mathsf{B}_r:=\{x\in\RR^n\mid \Vert x\Vert_2\le r\}\quad
	\text{and}\quad\mathsf{S}_r:=\{x\in\RR^n\mid \Vert x\Vert_2 = r\}.
\end{equation}
Define
\begin{equation}\label{eq::ops}
	f_r^{\min}:=\min\{f(x)\mid x\in \mathsf{B}_r\}\quad\text{and}\quad
	f_r^{\max}:=\max\{f(x)\mid x\in \mathsf{B}_r\}.
\end{equation}
Obviously, it holds that
\begin{enumerate}[\upshape (i)]
	\item\label{case1} if $\bfz$ is a local minimizer, then $f_r^{\max}>0$ and $f_r^{\min}=0$
for some $r\in\RR_+$;
\item\label{case2} if $\bfz$ is a local maximizer, then $f_r^{\max}=0$ and $f_r^{\min}<0$
for some $r\in\RR_+$;
\item\label{case3} if $\bfz$ is a saddle point, then $f_r^{\max}>0$ and $f_r^{\min}<0$
for any $r\in\RR_+$.
\end{enumerate}
Now we consider the above statements the other way around.
That is, can we classify the degenerate critical point
$\bfz$ by the signs of
$f_r^{\min}$ and $f_r^{\max}$? Two issues have to be addressed.
\begin{enumerate}[\upshape (1)]
\item If $\bfz$ is a local minimizer or maximizer, it can be certified by
giving a radius $r$ such that
$f_r^{\min}=0$ or $f_r^{\max}=0$. The difficulty is that if $f_r^{\min}<0<f_r^{\max}$
for some $r\in\RR_+$, then what is the type of $\bfz$?
Since we do not know if the radius $r$ is sufficiently small, we can not
claim that $\bfz$ is a saddle point. For example, consider the polynomial
$f(X_1,X_2)=X_1^2+(1-X_1)X_2^4$ (Example \ref{ex::ex1})
with $\bfz$ being an isolated real critical point.
Notice that $\bfz$ is degenerate.
If we choose $r=2\sqrt{2}$, then
$f_r^{\min}\le f(2,2)=-12<0<52=f(-2,2)\le f_r^{\max}$. However,
if $r<1$, we have $f(x_1,x_2)>0$ for any
$x\in\mathsf{B}_r\backslash\{\bfz\}$ and $f_r^{\min}=0$.
Hence, $\bfz$ is not a saddle point but a strict local minimizer.
Therefore, in order to determine the type of $\bfz$ by
the global extrema of $f$ over some ball $\mathsf{B}_r$,
we need to ensure that the radius $r$ is sufficiently small.
\item
Note that the optimization problems in $(\ref{eq::ops})$ are
themselves NP-hard. In particular, maximizing a cubic polynomial over a unit ball
is NP-hard \cite{RW}. Numerically, approximation methods for polynomial
optimization problems based on semidefinite relaxations have been extensively studied
\cite{HPh, Lasserre09, Laurent_sumsof, PPSOSMarshall}.
However, since we need to certify that $f_r^{\min}$ and $f_r^{\max}$
in cases (\ref{case1}) and (\ref{case2}) are exact $0$, any numerical errors in the output
of approximation methods for $(\ref{eq::ops})$ may mislead us to the
wrong case (\ref{case3}). Symbolically, a univariate polynomial whose roots contain
$f_r^{\min}$ and $f_r^{\max}$ may be obtained by means of the KKT system of (\ref{eq::ops})
and some elimination computations as in \cite{XZ2015}.
However, to determine the signs of $f_r^{\min}$
and $f_r^{\max}$, extra symbolic computations are needed to find one point in some real
algebraic set or to certify its emptiness \cite{Safey08, SafeyElDin2003}.
\end{enumerate}

In this paper, we aim to tackle the above issues and classify isolated
critical point $\bfz$ of $f$ by its global extrema over some Euclidean balls.
For the first issue,
we define the so-called {\itshape faithful radius} (Definition \ref{def::faithr})
of $\bfz$,
via the curve of tangency of $f$ at $\bfz$ which is studied in \cite{Durfee,VS},
such that
the type of $\bfz$ can be determined by the signs of $f^{\max}_r$ and
$f^{\min}_r$ for any faithful radius $r$ of $\bfz$ (Theorem \ref{th::main}).
Provided that an isolation radius (Definition \ref{def::is}) of $\bfz$ is known,
we propose an algorithm (Algorithm \ref{al::fr}) to compute a faithful radius of $\bfz$.
We also discuss some strategies to compute an isolation radius of $\bfz$.
For the second issue, instead of computing the extrema
$f_r^{\max}$ and $f_r^{\min}$ in $(\ref{eq::ops})$, we present an algorithm (Algorithm \ref{al::main})
to identify the type of $\bfz$ by
computing isolating intervals for each real root of a zero-dimensional polynomial system,
which can be done by, for example, the Rational Univariate Representations (RUR) \cite{Rouillier1999}
for multivariate polynomial systems.

\vskip 5pt
The paper is organized as follows.
Some notation and preliminaries used in this paper are given in Section \ref{sec::pre}.
We define the faithful radius of $\bfz$ in Section \ref{sec::faithfulradius}.
An algorithm for computing a faithful radius of $\bfz$ is presented in Section
\ref{sec::cmptr}. We show that how to determine the type of $\bfz$ in a symbolic way
in Section \ref{sec::types}. Some conclusion is made in Section \ref{sec::cons}.

\section{Notation and preliminaries}\label{sec::pre}
The symbol $\RR$ (resp., $\CC$) denotes the set of real (resp., complex) numbers.
Denote $\RR^{n\times n}$ (resp., $\CC^{n\times n}$) as the set of $n\times n$ matrices
with real (resp. complex) number entries.
$\RR[X]=\RR[X_1,\ldots,X_n]$ denotes the ring of polynomials in variables
$X=(X_1,\ldots,X_n)$ with real coefficients. Denote $\Vert X\Vert^2_2\in\RR[X]$ as the polynomial
$X_1^2+\cdots+X_n^2$ in variables $X$
while $\Vert x\Vert_2$ as the Euclidean norm of $x\in\RR^n$.
If $f, g$ are two functions with suitably chosen domains and codomains, then
$f\circ g$ denotes the composite function of $f$ and $g$.

A subset $I\subseteq\RR[X]$ is called an ideal if $0\in I$, $I+I\subseteq I$ and
$p\cdot q\in I$ for all $p\in I$ and $q\in\RR[X]$.
The product of  two ideals $I$ and $J$ in $\mathbb{R}[X],$
denoted by $I \cdot J,$ is the ideal generated by all products $f \cdot g$ where $f \in I$ and $g \in J.$
For $g_1,\ldots,g_s\in\RR[X]$,
denote $\langle g_1,\ldots,g_s\rangle$ as the ideal in $\RR[X]$ generated by $g_i$'s,
i.e., the set $g_1\RR[X]+\cdots+g_s\RR[X]$.
An ideal is radical if $f^m\in I$ for some integer $m\ge 1$ implies that $f\in I$.
The radical of an ideal $I\subseteq\RR[X]$, denoted $\sqrt{I}$, is the set
$\{f\in\RR[X]\mid f^m\in I\text{ for some integer }m\ge 1\}$.
An (resp. real) affine variety is a subset of $\CC^n$ (resp. $\RR^n$) that consists of common
zeros of a set of polynomials.
For an ideal $I\subseteq\RR[X]$, denote $\bfV_\CC(I)$ and $\bfV_\RR(I)$ as the affine
varieties defined by $I$ in $\CC^n$ and $\RR^n$, respectively.
For a polynomial $g\in\RR[X]$,
respectively replace $\bfV_\CC(\langle g\rangle)$ and $\bfV_\RR(\langle g\rangle)$ by
$\bfV_\CC(g)$ and $\bfV_\RR(g)$ for simplicity.
Given a set
$V\subseteq\CC^n$, denote $\mathbf{I}(V)\subseteq\RR[X]$ as the vanishing ideal of
$V$ in $\RR[X]$, i.e., the set of all polynomials in $\RR[X]$ which equal zero at
every point in $V$.
For an ideal $I\subseteq\RR[X]$, denote $\dim(I)$ as the Hilbert
dimension of $I$, i.e., the degree of the affine Hilbert polynomial of $I$.
For an ideal $I\subseteq\RR[X]$,
the decomposition $I=I_1\cap\cdots\cap I_s$ is called the equidimensional decomposition
of $I$ if each ideal $I_i$ is pure dimensional, i.e., all its associated
primes have the same dimension.
For an affine variety $V\subseteq\CC^n$,
denote $\dim(V)=\dim(\mathbf{I}(V))$ as its dimension.
When $\bfV_\CC(I)$ is finite, the ideal $I$ is called to be zero-dimensional.
For any subset $S\subseteq\CC^n$, denote $\overline{S}^\mathcal{Z}$ as the Zariski
closure of $S$ in $\CC^n$, i.e., $\overline{S}^{\mathcal{Z}}=\bfV_\CC(\mathbf{I}(S))$.
The $l$-th elimination ideal $I_l$ of an ideal $I\in\RR[X]$ is the ideal of
$\RR[X_{l+1},\ldots,X_n]$ defined by $I_l=I\cap\RR[X_{l+1},\ldots,X_n]$ which can be
computed by the Groebner basis of $I$ with respect to an elimination order of $X$.
For more basic concepts from algebraic geometry, we refer to \cite{CLO,singular}.
The following procedures are considered as black boxes
in this paper (c.f. \cite{ARG,singular}):
\begin{enumerate}[-]
	\item Compute the Hilbert dimension of a given ideal $I\subseteq\RR[X]$;
	\item Compute the equidimensional decomposition of a given ideal $I\subseteq\RR[X]$;
	\item Test whether a given ideal $I\subset\RR[X]$ is radical and compute
		$\sqrt{I}$ if it is not;
	\item Compute the vanishing ideal $\mathbf{I}
		\left(\overline{\bfV_\CC(I)\backslash\bfV_\CC(J)}^{\mathcal{Z}}\right)
		\subseteq\RR[X]$ for some ideals $I, J\subseteq\RR[X]$;
	\item Compute isolating intervals for each real root of a zero-dimensional
		polynomial system. This can be done by, for example, the
		Rational Univariate Representations (RUR) \cite{Rouillier1999}
		for multivariate polynomial systems.
\end{enumerate}

We recall some background in real algebraic geometry and refer to
\cite{realAG} for more details.
A semi-algebraic subset of $\RR^n$ is a subset of $\RR^n$ satisfying a
boolean combination of polynomial equations and inequalities with real coefficients.
In this paper, $\RR^n$ will always be considered with its Euclidean topology,
unless stated otherwise.
Let $S_1\subseteq\RR^m$ and $S_2\subseteq\RR^n$ be two semi-algebraic sets. A
mapping $\psi: S_1\rightarrow S_2$ is semi-algebraic if its graph is semi-algebraic
in $\RR^{m+n}$.
\begin{theorem}\cite[Theorem 2.3.6]{realAG}\label{th::decomp}
	Every semi-algebraic subset $S$ of $\RR^n$ is the disjoint union of a finite
	number of semi-algebraic sets $\cup_{i=1}^s S_i$.
	Each $S_i$ is semi-algebraically homeomorphic
	to an open hypercube $(0,1)^{d_i}\subseteq\RR^{d_i}$
	for some $d_i\in\NN$.
\end{theorem}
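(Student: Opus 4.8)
\medskip
\noindent\emph{Proof sketch.} The plan is to deduce the statement from the existence of a \emph{cylindrical algebraic decomposition} (CAD) adapted to a finite polynomial family. Concretely, I would prove by induction on $n$ the stronger claim: given any finite set $P_1,\ldots,P_k\in\RR[X_1,\ldots,X_n]$, there is a finite partition of $\RR^n$ into semi-algebraic cells, each semi-algebraically homeomorphic to some $(0,1)^d$, such that every $P_i$ has constant sign on each cell. Since $S$ is, by definition, described by a Boolean combination of sign conditions on certain polynomials $P_1,\ldots,P_k$, applying the claim to this family exhibits $S$ as a disjoint union of finitely many of the resulting cells, which is exactly the assertion.

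For $n=1$, let $a_1<\cdots<a_m$ be the finitely many real roots of the nonzero $P_i$'s and partition $\RR$ into the points $a_j$ and the open intervals $(-\infty,a_1),(a_1,a_2),\ldots,(a_m,+\infty)$; a point is semi-algebraically homeomorphic to $(0,1)^0$, each (possibly unbounded) open interval is semi-algebraically homeomorphic to $(0,1)$, and every $P_i$ is sign-constant on each piece.

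For the inductive step, regard each $P_i$ as a polynomial in $X_n$ with coefficients in $\RR[X_1,\ldots,X_{n-1}]$. The heart of the argument is to build a finite semi-algebraic partition of $\RR^{n-1}$ over each cell $B$ of which the real roots of the $P_i(x',X_n)$ behave uniformly: the number of distinct real roots of each $P_i$ is constant on $B$, these roots are continuous semi-algebraic functions $\xi_1<\cdots<\xi_\ell$ of $x'\in B$, and the order type of the full collection of roots of all the $P_i$'s is constant on $B$. For this one forms the finite auxiliary family $Q_1,\ldots,Q_l\in\RR[X_1,\ldots,X_{n-1}]$ made of the leading coefficients, discriminants and pairwise resultants of the $P_i$'s together with their principal subresultant coefficients (the data behind Thom's lemma), applies the induction hypothesis to $\{Q_j\}$ to get a cell partition of $\RR^{n-1}$ adapted to it, and then uses a connectedness and continuity argument to verify the roots vary as claimed over each such cell $B$.

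Finally, over each base cell $B$, semi-algebraically homeomorphic to $(0,1)^d$ via some $\varphi$, I cut the cylinder $B\times\RR$ along the graphs $\Gamma_j=\{(x',\xi_j(x'))\mid x'\in B\}$ of the root functions. This produces the graphs $\Gamma_j$ themselves, each semi-algebraically homeomorphic to $B$ (hence to $(0,1)^d$) by projection, and the open bands $\{(x',t)\mid x'\in B,\ \xi_j(x')<t<\xi_{j+1}(x')\}$ between consecutive graphs together with the two unbounded bands, each semi-algebraically homeomorphic to $B\times(0,1)$ (hence to $(0,1)^{d+1}$) via a fiberwise rescaling of $t$, using a semi-algebraic homeomorphism $\RR\cong(0,1)$ such as $t\mapsto t/(1+|t|)$ for the unbounded bands. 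Every $P_i$ is sign-constant on each such cell because it has a fixed real-root configuration over $B$, and ranging over all $B$ yields the required finite partition of $\RR^n$. The main obstacle is exactly the inductive construction of the adapted base decomposition: proving that, after refining to a fine enough semi-algebraic partition of $\RR^{n-1}$, the roots of the $P_i$ in the last variable genuinely vary continuously and never collide or cross — this non-collision and continuity fact, which is the technical core of CAD, is the only place real work beyond bookkeeping is needed.
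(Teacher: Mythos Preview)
The paper does not supply its own proof of this statement: it is quoted verbatim as a result from \cite[Theorem~2.3.6]{realAG} and used as a black box. Your sketch via cylindrical algebraic decomposition is correct and is in fact essentially the argument given in the cited reference, so there is nothing to compare against within the paper itself.
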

The dimension $\dim(S)$ of a semi-algebraic set $S\subseteq\RR^n$ is the maximum
of $d_i$ as in Theorem \ref{th::decomp}.
A subset $S\subseteq\RR^n$ is connected if for every pair of sets $S_1$
and $S_2$ closed in $S$, disjoint and satisfying $S_1\cup S_2=S$, one has
$S_1=S$ or $S_2=S$.
Given points $p_1$ and $p_2$ of a subset $S\subseteq\RR^n$, a path in $S$
from $p_1$ to $p_2$ is a continuous map $\varphi: [a, b]\rightarrow S$ of some
closed interval in the real line into $S$, such that $\varphi(a)=p_1$ and
$\varphi(b)=p_2$. A subset $S\subseteq\RR^n$ is said to be path connected if
every pair of points of $S$ can be jointed by a path in $S$.

Combining Theorems 2.4.4, 2.4.5 and Proposition 2.5.13 in \cite{realAG},
it follows that
\begin{prop}\label{prop::connected}
Let $S$ be a semi-algebraic set of $\RR^n$.
Then,
\begin{enumerate}[\upshape (i)]
	\item $S$ has a finite number of connected components which are closed
		in $S$;
	\item $S$ is connected if and only if it is path connected.
\end{enumerate}
\end{prop}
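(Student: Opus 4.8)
The plan is to derive both assertions from the finite cell decomposition already recorded in Theorem~\ref{th::decomp}, together with the local conic structure of semi-algebraic sets from \cite{realAG}; with these inputs in hand the two statements reduce to elementary point-set topology, and the easy implications (a path-connected space is connected; a connected component is closed) need nothing beyond general topology.

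I would prove (i) first. Using Theorem~\ref{th::decomp}, write $S=\bigsqcup_{i=1}^{s}S_i$ with each $S_i$ semi-algebraically homeomorphic to $(0,1)^{d_i}$, hence connected. Since a connected subset of a topological space is contained in exactly one of its connected components, every connected component of $S$ is a union of some of the pairwise disjoint pieces $S_1,\dots,S_s$; in particular $S$ has at most $s$ connected components. Closedness in $S$ is then automatic: the closure in $S$ of a connected component is connected and contains the component, so by maximality it equals the component. I will also record the consequent remark that, there being only finitely many of them, each connected component is open in $S$ as well (its complement is a finite union of the other, closed, components).

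For (ii) the implication ``path connected $\Rightarrow$ connected'' is immediate, so the content is the converse. Here I would invoke the local conic structure theorem for semi-algebraic sets (this is what underlies the quoted Theorems~2.4.4, 2.4.5 and Proposition~2.5.13 of \cite{realAG}): every $p\in S$ has an open neighbourhood in $S$ that is semi-algebraically homeomorphic to a cone with apex $p$, and a cone is path connected; hence $S$ is locally path connected, so its path components are open. Repeating the counting argument of (i) with ``path connected'' in place of ``connected'' (each cell $S_i$ is path connected, hence lies in a single path component) shows there are at most $s$ path components, so each is open with open complement, i.e.\ clopen in $S$. If $S$ is connected it has no proper nonempty clopen subset, so $S$ consists of a single path component, i.e.\ $S$ is path connected; this proves (ii).

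The only real obstacle is the local conic structure (equivalently, semi-algebraic triviality or triangulation) of semi-algebraic sets, which I would cite from \cite{realAG} rather than reprove --- exactly as the statement does when it says the result follows by ``combining'' those three results. Granting that black box, the delicate point is merely to guarantee that the path components are semi-algebraic and finite in number, and this is supplied by the finite cell decomposition together with local path connectedness. An alternative that sidesteps the conic structure is to start from a finite semi-algebraic triangulation of $S$ and detect adjacency of open simplices through their common faces; either way, all the genuine difficulty is localized in the triangulation/triviality machinery of real algebraic geometry, and the remainder is bookkeeping.
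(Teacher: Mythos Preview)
Your proposal is correct and aligned with the paper's approach: the paper's own ``proof'' consists solely of the sentence preceding the proposition, namely that it follows by combining Theorems~2.4.4, 2.4.5 and Proposition~2.5.13 of \cite{realAG}, with no further argument given. Your write-up essentially unpacks what those citations deliver---finiteness via the cell decomposition of Theorem~\ref{th::decomp}, and path connectedness via local conic structure/triviality---so you have supplied the details that the paper leaves to the reference; nothing in your argument departs from or goes beyond what those cited results provide.
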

Hence, the rest of this paper, by saying that a semi-algebraic subset of $\RR^n$
is connected, we also mean that it is path connected.

\begin{theorem}\cite[Curve selection lemma]{Milnor1968}
	Let $S$ be a semi-algebraic subset of $\RR^n$ and $x\in\RR^n$ a point
	belonging to the closure of $S$. Then there exists an analytic
	semi-algebraic mapping $\varphi: [0, \epsilon] \rightarrow \RR^n$ such that
$\varphi(0) = x$ and $\varphi((0, \epsilon])\subset S$.
\end{theorem}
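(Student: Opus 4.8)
The plan is to prove this classical curve selection lemma by passing to a real closed field of Puiseux series and invoking the Tarski--Seidenberg transfer principle. After a translation we may assume $x=\bfz$. Let $\theta(y)$ denote the fixed boolean combination of polynomial equalities and inequalities with real coefficients that defines $S$, so that $\bfz\in\overline{S}$ is equivalent to the statement that for every real $\varepsilon>0$ the sentence $\exists y\,\bigl(\theta(y)\wedge\sum_{i=1}^n y_i^2<\varepsilon\bigr)$ holds over $\RR$. First I would work over the real closed field $R$ of algebraic Puiseux series in $t$ over $\RR$ (a subfield of $\bigcup_{q\ge 1}\RR((t^{1/q}))$), ordered so that $t$ is a positive infinitesimal; restricting to Puiseux series algebraic over $\RR(t)$ keeps us in a setting where the series we extract are convergent and semi-algebraic. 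Applying transfer to the above sentence with $\varepsilon=t>0$ produces a point $y=(y_1(t),\dots,y_n(t))\in R^n$ with $\theta(y)$ true over $R$ and $\sum_i y_i(t)^2<t$. Writing $y_i(t)=\sum_{k\ge k_i}a_{i,k}\,t^{k/q}$ with a common denominator $q\in\NN$, real coefficients, and a common positive radius of convergence $\delta$, the bound $\sum_i y_i(t)^2<t$ forces every $y_i$ to have strictly positive order, hence no constant term.

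Next I would reparametrize by setting $\varphi(\tau):=\bigl(y_1(\tau^q),\dots,y_n(\tau^q)\bigr)$. After the substitution $t=\tau^q$ each component is an ordinary convergent power series in $\tau$ with vanishing constant term and radius of convergence $\delta^{1/q}=:\epsilon$, so $\varphi$ is a well-defined real analytic map $[0,\epsilon]\to\RR^n$ with $\varphi(0)=\bfz=x$; it is semi-algebraic because each component is algebraic over $\RR(\tau)$ and therefore has a semi-algebraic graph. It remains to check $\varphi\bigl((0,\epsilon]\bigr)\subset S$. After the same change of variable (and shrinking $\epsilon$ if necessary), ``$\theta(\varphi(\tau))$ holds for all $\tau$ with $0<\tau\le\epsilon$'' is a consequence over $R$ of $\theta(y)$; concretely, each polynomial occurring in $\theta$, evaluated along $\varphi$, is a Puiseux series in $\tau$ that is either identically zero or of constant sign on a punctured neighbourhood of $0$, so the required equalities and inequalities persist for all sufficiently small real $\tau>0$. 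Shrinking $\epsilon$ once more delivers the asserted map.

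I expect the main obstacle to be the bookkeeping around the transfer, and above all the last step: one must argue that the finitely many equalities and (strict and non-strict) inequalities defining $S$, once verified for the single Puiseux point $y(t)$, persist for the substituted real points $\varphi(\tau_0)$ for all small $\tau_0>0$ --- this is exactly the uniformity built into Tarski--Seidenberg, or, elementarily, the fact that a nonzero convergent Puiseux series has constant sign near $0$. A secondary subtlety is the insistence on working inside the field of \emph{algebraic} Puiseux series, which is what makes $\varphi$ genuinely analytic and semi-algebraic rather than a merely formal curve, and one should check that the transfer is legitimate inside this smaller real closed field. As an alternative, one can follow Milnor's original inductive argument: induct on $\dim S$, using the cell decomposition of Theorem \ref{th::decomp} (or a stratification into analytic semi-algebraic pieces) to reduce to the case where $\bfz$ lies in the closure of a single analytic semi-algebraic stratum, and then construct the curve within that stratum.
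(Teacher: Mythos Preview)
The paper does not prove this theorem at all: it is stated as a quotation from Milnor's book and used as a black box, so there is no ``paper's own proof'' to compare against. Your proposal is therefore strictly more than what the paper does.

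On the substance, your Puiseux-series/transfer argument is a correct and standard route to the curve selection lemma; it is essentially the proof given in Bochnak--Coste--Roy, \emph{Real Algebraic Geometry} (the reference \cite{realAG} already used in the paper). The key technical points you flag are the right ones: (i) working in the real closure of $\RR(t)$ with $t$ a positive infinitesimal, i.e.\ the field of \emph{algebraic} Puiseux series, is what guarantees both that transfer is available and that the resulting series are convergent with semi-algebraic graphs; and (ii) the persistence of the sign conditions in $\theta$ for small real $\tau>0$ follows from the fact that each polynomial appearing in $\theta$, evaluated along $\varphi$, is an algebraic (hence convergent Puiseux) function of $\tau$ with a well-defined sign on a punctured neighbourhood of $0$. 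Your alternative sketch via stratification and dimension induction is closer in spirit to Milnor's original argument; the Puiseux approach is more algebraic and delivers analyticity and semi-algebraicity of $\varphi$ in one stroke, whereas Milnor's approach is more hands-on geometrically but requires a bit more care to obtain an analytic (rather than merely $C^\infty$ or piecewise analytic) parametrization.
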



\section{Faithful radius and types of degenerate critical points}\label{sec::faithfulradius}

In the rest of this paper, we always denote $f$ as the considered polynomial in
$\RR[X]$ with $\bfz$ being an isolated real critical point.

Denote $\crit_\RR(f)$ and $\crit_\CC(f)$ as the sets of real and complex
critical points of $f$, respectively.
Define
\begin{equation}\label{eq::gamma}
\Gamma_\RR(f):=\left\{x\in\RR^n\mid \exists \lambda\in\RR
	\quad\text{s.t.}\ \nabla f(x)=\lambda x\right\}.
\end{equation}
Since $\bfz\in\crit_\RR(f)$, we have
\[
	\Gamma_\RR(f)=\left\{x\in\RR^n\ \Big|\ \frac{\partial f}{\partial x_i}x_j=
	\frac{\partial f}{\partial x_j}x_i,\ 1\le i<j\le n\right\}.
\]
The real variety $\Gamma_\RR(f)$ is called the {\itshape tangency variety} at the origin \cite{Durfee,VS}.
Geometrically, the tangency variety $\Gamma_\RR(f)$ consists of
all points $x$ in $\RR^n$ at which the level set of $f$ is tangent to the sphere in
$\RR^n$ centered at the origin with radius $\Vert x\Vert_2$.

\begin{prop}\label{prop::uv}
For any $r\in\RR_+$ and $u\in\mathsf{B}_r$,
there exists a point $v\in\Gamma_\RR(f)$ with $\Vert
v\Vert_2\le\Vert u\Vert_2$ such that $f(v)=f(u)$.
\end{prop}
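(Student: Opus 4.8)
The plan is to fix $r \in \RR_+$ and $u \in \mathsf{B}_r$, and produce the desired $v$ by minimizing $\|x\|_2^2$ over the level set $\{x : f(x) = f(u)\}$ intersected with the closed ball $\mathsf{B}_{\|u\|_2}$. First I would handle the trivial case $u = \bfz$: then $v = \bfz \in \Gamma_\RR(f)$ works since $\bfz$ is a critical point. Assuming $u \neq \bfz$, set $\alpha := f(u)$ and consider the semi-algebraic set $K := \{x \in \RR^n : f(x) = \alpha,\ \|x\|_2 \le \|u\|_2\}$. This set is nonempty (it contains $u$) and compact, so the continuous function $x \mapsto \|x\|_2^2$ attains its minimum on $K$ at some point $v$, and clearly $\|v\|_2 \le \|u\|_2$ and $f(v) = \alpha = f(u)$. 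It remains to show $v \in \Gamma_\RR(f)$.

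Next I would argue that $v$ lies in the tangency variety by a Lagrange-multiplier / first-order optimality argument. There are two sub-cases. If $f(v) = \alpha$ is not a critical value contributing a singularity at $v$, i.e.\ $\nabla f(v) \neq \bfz$, then $v$ is a minimizer of $\|x\|_2^2$ subject to the single equality constraint $f(x) = \alpha$ (the ball constraint $\|x\|_2 \le \|u\|_2$ is automatically satisfied in a neighborhood, or if $v$ is on the boundary sphere one checks directly that $v$ is a scalar multiple of $\nabla f(v)$ anyway); the KKT conditions then give $\nabla(\|x\|_2^2)(v) = 2v = \mu \nabla f(v)$ for some $\mu \in \RR$. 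If $\mu = 0$ this forces $v = \bfz$, handled already; otherwise $\nabla f(v) = (2/\mu) v$, so $v \in \Gamma_\RR(f)$ by \eqref{eq::gamma}. If instead $\nabla f(v) = \bfz$, then $v$ is itself a real critical point of $f$, and since $\bfz \in \crit_\RR(f)$ the defining equations $\frac{\partial f}{\partial x_i}(v) x_j = \frac{\partial f}{\partial x_j}(v) x_i$ hold trivially (both sides vanish), so again $v \in \Gamma_\RR(f)$.

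The main obstacle is the regularity issue at the minimizer: the standard KKT theorem requires a constraint qualification, which can fail precisely when $\nabla f(v) = \bfz$ or when the active ball constraint interacts badly with the level-set constraint. I would address the $\nabla f(v) = \bfz$ failure by the direct observation above (it puts $v$ in $\Gamma_\RR(f)$ outright). For the case where $v$ lies on the boundary sphere $\mathsf{S}_{\|u\|_2}$ with $\nabla f(v) \neq \bfz$, rather than invoking KKT with two constraints I would note that $v$ must in fact be a constrained minimizer of $\|x\|_2$ on the level set alone — if some nearby point on $\{f = \alpha\}$ had smaller norm it would still lie in $K$, contradicting minimality — so the one-constraint analysis applies and yields $2v = \mu\nabla f(v)$ as before. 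A cleaner alternative, if one prefers to avoid case analysis, is to invoke the curve selection lemma: take a path in the level set from $v$ toward a putative smaller-norm point and differentiate $\|\varphi(t)\|_2^2$ at its minimum, but the compactness-plus-Lagrange argument above is more elementary and I would present that.
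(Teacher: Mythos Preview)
Your proposal is correct and follows essentially the same approach as the paper: minimize $\|x\|_2^2$ over the level set $\{f(x)=f(u)\}$, then use the KKT conditions (splitting on whether $\nabla f(v)=\bfz$) to conclude $v\in\Gamma_\RR(f)$. The paper's version is terser---it simply asserts a minimizer exists and invokes LICQ when $\nabla f(v)\neq\bfz$---whereas you make the compactness explicit by intersecting with $\mathsf{B}_{\|u\|_2}$ and then correctly observe that the extra ball constraint is redundant for first-order optimality; this is the same argument with the details spelled out.
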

\begin{proof}
Consider the following optimization problem
\[
\underset{x\in\RR^n}{\min}\ \Vert x\Vert_2^2\quad\text{s.t.}\ f(x)=f(u).
\]
Since $u$ is a feasible point, there exists a minimizer $v$ with $\Vert
v\Vert_2\le\Vert u\Vert_2$.
If $\nabla f(v)=\bfz$, then clearly $v\in\Gamma_\RR(f)$. Otherwise,
the linear independence constraint qualification condition holds at $v$
and therefore $v$ satisfies the Karush--Kuhn--Tucker optimality condition.
It implies that $v\in\Gamma_\RR(f)$.
\end{proof}
\begin{remark}
	For any $r\in\RR_+$, the semi-algebraic set $\Gamma_\RR(f)\cap\mathsf{B}_r$
	has finitely many connected components $K_i$ with $\bfz$ belonging to
	their closures by Proposition \ref{prop::connected}.
	For each $i$,
	by the curve selection lemma, there exists an analytic curve
	$\varphi_i: [0, \epsilon]\rightarrow\RR^n$ such that $\varphi_i(0)=\bfz$ and
	$\varphi_i(t)\in K_i$ for $t\neq 0$.
	By Proposition \ref{prop::uv}, it can
	be shown that the behavior of $f$ along the curves $\varphi_i$ captures
	all information of $f$ near $\bfz$. That is, we can identify the type of
	the critical point $\bfz$ of $f$ by extremal test of the univariate functions
	$f\circ\varphi_i$ at $0$.
	This approach was studied in \cite{Barone-Netto1984,Barone-Netto1996,VS} which,
	however, provide no general procedures to compute the expressions of the
	analytic functions $\varphi_i$.
\end{remark}

For any $r\in\RR_+$,
recall the definition of $f_r^{\min}$ and $f_r^{\max}$ in $(\ref{eq::ops})$.
\begin{cor}\label{cor::uv}
For any $r\in\RR_+$, we have
\[
f_r^{\min}=\min\{f(x)\mid x\in \Gamma_\RR(f)\cap \mathsf{B}_r\}
\quad\text{and}\quad
f_r^{\max}=\max\{f(x)\mid x\in \Gamma_\RR(f)\cap \mathsf{B}_r\}.
\]
\end{cor}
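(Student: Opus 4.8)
The plan is to derive this directly from Proposition~\ref{prop::uv}, with the only supplementary ingredient being compactness of $\mathsf{B}_r$. First I would record the trivial inclusion $\Gamma_\RR(f)\cap\mathsf{B}_r\subseteq\mathsf{B}_r$, which forces
$\min\{f(x)\mid x\in\Gamma_\RR(f)\cap\mathsf{B}_r\}\ge f_r^{\min}$ and
$\max\{f(x)\mid x\in\Gamma_\RR(f)\cap\mathsf{B}_r\}\le f_r^{\max}$,
once we know the constrained optima exist; existence will be a byproduct of the next step rather than an assumption. Note also that $\Gamma_\RR(f)\cap\mathsf{B}_r$ is nonempty since $\bfz$ lies in it.

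For the reverse inequality for the minimum, I would use that $\mathsf{B}_r$ is compact and $f$ is continuous, so there is $u\in\mathsf{B}_r$ with $f(u)=f_r^{\min}$. Applying Proposition~\ref{prop::uv} to this $u$ produces $v\in\Gamma_\RR(f)$ with $\Vert v\Vert_2\le\Vert u\Vert_2\le r$ and $f(v)=f(u)=f_r^{\min}$; hence $v\in\Gamma_\RR(f)\cap\mathsf{B}_r$ and $f$ attains the value $f_r^{\min}$ there. Since $f(x)\ge f_r^{\min}$ for every $x\in\mathsf{B}_r$, and a fortiori for every $x\in\Gamma_\RR(f)\cap\mathsf{B}_r$, this simultaneously shows that the minimum over $\Gamma_\RR(f)\cap\mathsf{B}_r$ is attained and equals $f_r^{\min}$. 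The argument for $f_r^{\max}$ is verbatim the same, applying Proposition~\ref{prop::uv} to a global maximizer of $f$ on $\mathsf{B}_r$.

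I do not expect a real obstacle: the corollary is essentially a repackaging of Proposition~\ref{prop::uv}. The only point requiring a little care is to phrase the conclusion so that the existence of the constrained extrema over $\Gamma_\RR(f)\cap\mathsf{B}_r$ is part of what is proved, not presupposed; this comes for free because the witness point $v$ both realizes the extremal value and because $f_r^{\min}$ (resp.\ $f_r^{\max}$) is already a global bound on the larger set $\mathsf{B}_r$.
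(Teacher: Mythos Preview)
Your proposal is correct and follows essentially the same approach as the paper: the paper's proof is the one-line observation that since $f_r^{\min}$ and $f_r^{\max}$ are attained (by compactness of $\mathsf{B}_r$), the result follows from Proposition~\ref{prop::uv}. Your write-up simply unpacks this, additionally making explicit the trivial inequality from the inclusion $\Gamma_\RR(f)\cap\mathsf{B}_r\subseteq\mathsf{B}_r$ and the attainment of the constrained extrema.
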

\begin{proof}
Since $f_r^{\min}$ and $f_r^{\max}$ can be reached,
the conclusion follows from Proposition \ref{prop::uv}.
\end{proof}

\begin{cor}\label{cor::notiso}
$\bfz$ is not isolated in $\Gamma_\RR(f)$ and $\dim(\Gamma_\RR(f)\backslash\crit_\RR(f))\ge 1$.
\end{cor}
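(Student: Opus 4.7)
The plan is to argue both parts by contradiction/through the tangency-variety description of extrema given by Corollary \ref{cor::uv}, together with the curve selection lemma.

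For the first claim, I would suppose for contradiction that $\bfz$ is isolated in $\Gamma_\RR(f)$. Then there exists $r\in\RR_+$ such that $\Gamma_\RR(f)\cap\mathsf{B}_r=\{\bfz\}$. By Corollary \ref{cor::uv}, both $f_r^{\min}$ and $f_r^{\max}$ are attained on $\Gamma_\RR(f)\cap\mathsf{B}_r$, so $f_r^{\min}=f_r^{\max}=f(\bfz)=0$. But this forces $f\equiv 0$ on the Euclidean ball $\mathsf{B}_r$, hence $f\equiv 0$ on $\RR^n$ since $f$ is a polynomial, which contradicts the assumption that $\bfz$ is an isolated real critical point of $f$ (if $f\equiv 0$ then every point of $\RR^n$ is a critical point). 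Thus $\bfz$ is not isolated in $\Gamma_\RR(f)$.

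For the second claim, since $\bfz$ is an isolated real critical point, there exists $r\in\RR_+$ such that $\crit_\RR(f)\cap\mathsf{B}_r=\{\bfz\}$. By the first part, the set $\Gamma_\RR(f)\cap\mathsf{B}_r$ contains points other than $\bfz$, and all such points lie in $\Gamma_\RR(f)\setminus\crit_\RR(f)$. In particular $\bfz$ belongs to the closure of the semi-algebraic set $\Gamma_\RR(f)\setminus\crit_\RR(f)$. Applying the curve selection lemma produces an analytic semi-algebraic curve $\varphi:[0,\epsilon]\to\RR^n$ with $\varphi(0)=\bfz$ and $\varphi((0,\epsilon])\subset \Gamma_\RR(f)\setminus\crit_\RR(f)$. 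Since the image of this curve is infinite, the decomposition Theorem \ref{th::decomp} rules out dimension zero (a zero-dimensional semi-algebraic set is a finite disjoint union of points), so $\dim(\Gamma_\RR(f)\setminus\crit_\RR(f))\ge 1$.

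The only step that requires any real care is the first one: one must rule out the degenerate situation where the tangency variety could accidentally shrink to $\{\bfz\}$ near the origin. The argument above sidesteps higher-order analysis entirely by passing through the variational characterization of $f_r^{\min},f_r^{\max}$ in Corollary \ref{cor::uv}; this is what makes the proof short. The second part is then an immediate consequence combined with the curve selection lemma and the standard fact that infinite semi-algebraic sets have dimension at least one.
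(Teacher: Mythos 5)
Your proof is correct, and it diverges from the paper's argument in one genuinely interesting way. For the first assertion you argue by contradiction (if $\Gamma_\RR(f)\cap\mathsf{B}_r=\{\bfz\}$ then Corollary~\ref{cor::uv} forces $f\equiv 0$ on a ball, hence everywhere), whereas the paper argues directly by producing, for each $r$, a nonzero $u_r\in\Gamma_\RR(f)\cap\mathsf{B}_r$ with $f(u_r)\neq 0$ and letting $r\to 0$; these are the same idea in contrapositive form. The substantive difference is in the second assertion: the paper invokes Sard's theorem to say the set of real critical values of $f$ is finite, then observes that since $f(u_r)\neq 0$ and $f(u_r)\to 0$ there are infinitely many distinct values among the $f(u_r)$, so infinitely many $u_r$ escape $\crit_\RR(f)$. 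You instead exploit the standing hypothesis that $\bfz$ is an isolated real critical point directly: fix an isolation radius $r$, note that every point of $\Gamma_\RR(f)\cap\mathsf{B}_r$ other than $\bfz$ automatically lies in $\Gamma_\RR(f)\setminus\crit_\RR(f)$, and the first part makes that set infinite. Your route is more elementary (no Sard) but leans harder on the isolation hypothesis; the paper's version would still work if one only knew $f$ nonconstant. One small economy you could make: the curve selection lemma is not needed at all. Once you know $\Gamma_\RR(f)\setminus\crit_\RR(f)$ contains the infinite semi-algebraic set $\Gamma_\RR(f)\cap\mathsf{B}_r\setminus\{\bfz\}$, Theorem~\ref{th::decomp} already gives dimension $\ge 1$, since a zero-dimensional semi-algebraic set is finite. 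The paper uses exactly this shortcut.
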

\begin{proof}
Since $f$ is not a zero polynomial,
either $f^{\max}_r$ or $f^{\min}_r$ is nonzero for any $r\in\RR_+$.
Hence by Proposition \ref{prop::uv}, there exists a nonzero
$u_r\in\Gamma_\RR(f)\cap\mathsf{B}_r$ such that either $f(u_r)=f^{\max}_r\neq 0$ or
$f(u_r)=f^{\min}_r\neq 0$.
Thus, $\bfz$ is not isolated in $\Gamma_\RR(f)$ since $\lim_{r\rightarrow 0} u_r=\bfz$.

Note that
$\{f(x)\mid x\in\crit_{\RR}(f)\}$ is a finite set by Sard's theorem.
Because $f(u_r)\neq 0$ for each $r$ and $\lim_{r\rightarrow 0} f(u_r)=0$,
there must be infinitely many $u_r\in\Gamma_\RR(f)\backslash\crit_\RR(f)$.
Then we have $\dim(\Gamma_\RR(f)\backslash\crit_\RR(f))\ge 1$ by Theorem
\ref{th::decomp}.
\end{proof}


\begin{define}\label{def::is}
We call a $R\in\RR_+$ an {\bf\itshape isolation radius} of $\bfz$
if $\crit_\RR(f)\cap\mathsf{B}_R=\{\bfz\}$.
\end{define}

\begin{define}\label{def::faithr}
We call an isolation radius $R$ of $\bfz$ a {\bf\itshape faithful radius}
if the following conditions hold:
\begin{inparaenum}[\upshape(i\upshape)]
\item $\Gamma_\RR(f)\cap\mathsf{B}_R$ is connected;
\item $\Gamma_\RR(f)\cap \bfV_{\RR}(f)\cap\mathsf{B}_R=\{\bfz\}$.
\end{inparaenum}
\end{define}


Note that $\Gamma_\RR(f)\cap\mathsf{B}_R$ is also path connected if $R$
is a faithful radius by Proposition \ref{prop::connected}.
The following result shows that if $R$ is a faithful radius, then we can
classify the degenerate real critical point $\bfz$ of $f$ by the signs of its
global extrema over the ball $\bfB_R$.

\begin{theorem}\label{th::main}
Suppose $R\in\RR_+$ is a faithful radius, then
\begin{enumerate}[\upshape (1)]
\item $\bfz$ is a local minimizer if and only if $f_R^{\max}>0$ and $f_R^{\min}=0$;
\item $\bfz$ is a local maximizer if and only if $f_R^{\max}=0$ and $f_R^{\min}<0$;
\item $\bfz$ is a saddle point if and only if $f_R^{\max}>0$ and $f_R^{\min}<0$.
\end{enumerate}
\end{theorem}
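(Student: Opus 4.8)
The plan is to observe that the three possibilities for the type of $\bfz$ (local minimizer, local maximizer, saddle point) are exhaustive, so it suffices to establish the three forward implications (1)$\Rightarrow$, (2)$\Rightarrow$, (3)$\Rightarrow$; each converse then follows by elimination. For instance, if $f_R^{\max}>0$ and $f_R^{\min}=0$ but $\bfz$ were a local maximizer, then (2)$\Rightarrow$ would force $f_R^{\max}=0$, and if $\bfz$ were a saddle point, (3)$\Rightarrow$ would force $f_R^{\min}<0$; both impossible, so $\bfz$ is a local minimizer. Moreover (2)$\Rightarrow$ follows from (1)$\Rightarrow$ applied to $-f$: passing to $-f$ changes neither $\Gamma_\RR(f)$, nor $\crit_\RR(f)$, nor $\bfV_\RR(f)$, so $R$ remains a faithful radius, while $(-f)_R^{\min}=-f_R^{\max}$, $(-f)_R^{\max}=-f_R^{\min}$, and $\bfz$ is a local maximizer of $f$ iff a local minimizer of $-f$. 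Implication (3)$\Rightarrow$ is immediate and uses no property of the faithful radius: since $f(\bfz)=0$ and $\bfz\in\mathsf{B}_R$ one always has $f_R^{\min}\le 0\le f_R^{\max}$, and if $f_R^{\max}=0$ then $f\le 0$ on $\mathsf{B}_R$, making $\bfz$ a local maximizer (symmetrically for $f_R^{\min}=0$); hence a saddle point satisfies $f_R^{\max}>0$ and $f_R^{\min}<0$. So the whole content sits in (1)$\Rightarrow$.

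To prove (1)$\Rightarrow$, I would suppose $\bfz$ is a local minimizer and set $\Gamma:=\Gamma_\RR(f)\cap\mathsf{B}_R$ and $U^-:=\{x\in\Gamma\mid f(x)<0\}$. Then $U^-$ is open in $\Gamma$, being the preimage of an open set under the continuous $f$. The crux is that $U^-$ is also closed in $\Gamma$: its closure in $\Gamma$ lies in $\{x\in\Gamma\mid f(x)\le 0\}$, which by condition (ii) of Definition \ref{def::faithr} equals $U^-\cup\{\bfz\}$; and $\bfz\notin\overline{U^-}$, because $\bfz$ being a local minimizer there is a ball $\mathsf{B}_\epsilon$ on which $f\ge 0$, so no sequence of points with $f<0$ can converge to $\bfz$. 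Since $\Gamma$ is connected by condition (i) of Definition \ref{def::faithr} and $\bfz\in\Gamma\setminus U^-$, the clopen subset $U^-$ is empty, i.e.\ $f\ge 0$ on $\Gamma_\RR(f)\cap\mathsf{B}_R$. Corollary \ref{cor::uv} then gives $f_R^{\min}=\min\{f(x)\mid x\in\Gamma_\RR(f)\cap\mathsf{B}_R\}\ge 0$, hence $f_R^{\min}=0$ since $f(\bfz)=0$. Finally, if $f_R^{\max}=0$ also held, then $f\le 0$ on $\mathsf{B}_R$ as well, so $f\equiv 0$ on $\mathsf{B}_R$ and every point of the open ball would be a critical point of $f$, contradicting that $R$ is an isolation radius; therefore $f_R^{\max}>0$.

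The hard part, and the only nontrivial step, is the closedness of $U^-$ in $\Gamma$ — equivalently, excluding points of $\Gamma_\RR(f)\cap\mathsf{B}_R$ with $f<0$ that accumulate at $\bfz$ — and this is exactly where the defining properties of a faithful radius are both indispensable: condition (ii) confines the zero set of $f$ on $\Gamma_\RR(f)\cap\mathsf{B}_R$ to $\{\bfz\}$, condition (i) upgrades ``clopen'' to ``empty'', and the local-minimizer hypothesis serves only to keep $\bfz$ out of $\overline{U^-}$. One could instead avoid the clopen formulation via path-connectedness (Proposition \ref{prop::connected}): join $\bfz$ to a hypothetical point with $f<0$ by a path $\gamma$ in $\Gamma_\RR(f)\cap\mathsf{B}_R$ and take $t_1$ to be the first parameter at which $f\circ\gamma$ ceases to be $\ge 0$; then $f(\gamma(t_1))=0$, so $\gamma(t_1)\in\Gamma_\RR(f)\cap\bfV_\RR(f)\cap\mathsf{B}_R=\{\bfz\}$, contradicting that $f\ge 0$ near $\bfz$. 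Everything else is routine bookkeeping with $f_R^{\min}\le 0\le f_R^{\max}$ and Corollary \ref{cor::uv}.
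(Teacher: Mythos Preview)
Your proof is correct and rests on the same core mechanism as the paper's: use the connectedness of $\Gamma_\RR(f)\cap\mathsf{B}_R$ together with condition~(ii) of Definition~\ref{def::faithr} to rule out any sign change of $f$ along $\Gamma_\RR(f)\cap\mathsf{B}_R$ away from $\bfz$. The only difference is organizational---the paper proves the ``if'' part of (3) directly via the path argument you sketch at the end, whereas you prove the ``only if'' part of (1) via the equivalent clopen formulation and then deduce the rest by symmetry and elimination.
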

\begin{proof}
$(1)$ and $(2)$ are clear if we can prove $(3)$.
Since $\bfz$ is an isolated real critical point,
we only need to prove the ``if'' part.
By Corollary \ref{cor::uv}, there exists a
$u\in\Gamma_\RR(f)\cap \mathsf{B}_R$ such that $f(u)=f_R^{\min}$. Since $R$
is a faithful radius, $\bfz$ and $u$ are path connected, i.e. there exists a
continuous
mapping $\phi(t): [a,b]\rightarrow \Gamma_\RR(f)$ such that
$\bfz\not\in\phi((a,b))$,
$\phi(a)=\bfz$ and $\phi(b)=u$. We have $f(\phi(t))<0$ for all $t\in(a,b]$.
Otherwise, by the continuity, there exists $\bar{t}\in(a,b)$ such that
$f(\phi(\bar{t}))=0$. Since
$R$ is faithful, we have $\phi(\bar{t})=\bfz$ by the definition, a contradiction.
Similarly, let $f_R^{\max}>0$ be reached at
$v\in \Gamma_\RR(f)\cap \mathsf{B}_R$, then there exists a continuous
mapping $\varphi(t): [a,b]\rightarrow\Gamma_\RR(f)$ such that
$\bfz\not\in\varphi((a,b))$,
$\varphi(a)=\bfz$, $\varphi(b)=v$ and  $f(\varphi(t))>0$ for all $t\in(a,b]$.
Therefore, $\bfz$ is a saddle point of $f$.
\end{proof}

There always exists a faithful radius of $\bfz$. In fact,
\begin{theorem}
$\bfz$ is an isolated real critical point of $f$ if and only if
there is a faithful radius of $\bfz$.
\end{theorem}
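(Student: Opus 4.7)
The forward implication is immediate from Definition \ref{def::faithr}: a faithful radius is in particular an isolation radius, so its existence witnesses the isolation of $\bfz$. For the reverse implication, assume $\bfz$ is an isolated real critical point and fix an isolation radius $R_1>0$. My plan is to show that conditions (i) and (ii) of Definition \ref{def::faithr} each hold for all sufficiently small $R$, so that any $R$ lying below $R_1$ and below the two resulting thresholds is faithful.

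For condition (ii) I would show that $\bfz$ is isolated in $\Gamma_\RR(f)\cap\bfV_\RR(f)$. If not, the curve selection lemma produces an analytic semi-algebraic arc $\varphi:[0,\epsilon]\to\RR^n$ with $\varphi(0)=\bfz$ and $\varphi((0,\epsilon])\subset(\Gamma_\RR(f)\cap\bfV_\RR(f))\setminus\{\bfz\}$. After shrinking $\epsilon$ so that $\varphi([0,\epsilon])\subseteq\mathsf{B}_{R_1}$, the isolation hypothesis forces $\nabla f(\varphi(t))\ne\bfz$ for $t>0$, so there is a nonzero scalar $\lambda(t)$ with $\nabla f(\varphi(t))=\lambda(t)\varphi(t)$. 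Differentiating $f\circ\varphi\equiv 0$ yields
\[
0 \;=\; \langle\nabla f(\varphi(t)),\varphi'(t)\rangle \;=\; \lambda(t)\langle\varphi(t),\varphi'(t)\rangle \;=\; \tfrac{\lambda(t)}{2}\,\tfrac{d}{dt}\|\varphi(t)\|_2^2,
\]
so $\|\varphi(t)\|_2$ is constant on $(0,\epsilon]$; continuity at $0$ then forces $\varphi\equiv\bfz$, a contradiction. Hence there exists $R_3\le R_1$ with $\Gamma_\RR(f)\cap\bfV_\RR(f)\cap\mathsf{B}_{R_3}=\{\bfz\}$.

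For condition (i) I would invoke the local conic structure theorem for closed semi-algebraic sets (see \cite{realAG}): since $\Gamma_\RR(f)$ is a real algebraic set containing $\bfz$, there exists $R_2>0$ such that, for every $0<R\le R_2$, the pair $(\mathsf{B}_R,\,\Gamma_\RR(f)\cap\mathsf{B}_R)$ is semi-algebraically homeomorphic, via a map fixing $\bfz$, to $(\mathsf{B}_R,\,\{tx:x\in\Gamma_\RR(f)\cap\mathsf{S}_R,\ t\in[0,1]\})$. The target cone is manifestly (path-)connected through its apex $\bfz$, so $\Gamma_\RR(f)\cap\mathsf{B}_R$ is connected as well. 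Setting $R=\min(R_1,R_2,R_3)$ then produces a faithful radius. The main obstacle I anticipate is the derivative identity used for (ii): the crucial leverage is that isolation guarantees $\lambda(t)\ne 0$ along the arc, which is exactly what allows dividing out to conclude that the norm is constant; condition (i) is comparatively straightforward once local conic structure is accepted, although that tool is not restated in the paper's preliminaries.
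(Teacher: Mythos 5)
Your proof is correct, and it takes the same overall three-radii strategy as the paper (bound each of the isolation, connectivity, and $\Gamma_\RR(f)\cap\bfV_\RR(f)$ conditions separately, then take the minimum), but the two key steps are handled by genuinely different tools. For condition (ii) both you and the paper apply the curve selection lemma and differentiate $f\circ\varphi\equiv 0$ to get $0=\tfrac{\lambda(t)}{2}\tfrac{d}{dt}\|\varphi(t)\|_2^2$; the paper then invokes a monotonicity lemma to force $\tfrac{d}{dt}\|\varphi\|_2^2\neq 0$ and hence $\lambda(t)=0$, contradicting isolation of the critical point, whereas you run the implication the other way (isolation forces $\lambda(t)\neq 0$, hence $\|\varphi\|_2^2$ is constant and equal to $0$ by continuity, contradicting $\varphi(t)\neq\bfz$). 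Both are sound; your version avoids the appeal to the monotonicity lemma. For condition (i) the routes diverge more substantially: the paper considers the finitely many closed connected components of $\Gamma_\RR(f)\cap\mathsf{B}_R$, picks $R_2$ smaller than the distance from $\bfz$ to the nearest component not containing it, and concludes connectivity of $\Gamma_\RR(f)\cap\mathsf{B}_{R_2}$. You instead use the local conic structure theorem. Your choice is the more robust one: the paper's argument implicitly assumes that the component $\mathcal{C}_1$ through $\bfz$ remains connected after intersecting with the smaller ball $\mathsf{B}_{R_2}$, which is a nontrivial fact, and the local conic structure theorem (Theorem 9.3.6 of Bochnak--Coste--Roy) is exactly what guarantees it. You are right that the paper does not list this theorem among its black-box tools, but it is available in the cited reference and your use of it is appropriate.
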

\begin{proof}
We only need to prove the ``only if'' part and
assume that $\bfz$ is an isolated real critical point of $f$.

(i) By the assumption, there is an isolation radius $R_1\in\RR_+$ such that
$\crit_\RR(f)\cap \mathsf{B}_{R_1}=\{\bfz\}$.

(ii) Since $\Gamma_\RR(f)\cap\mathsf{B}_R$ is a closed semi-algebraic set,
by Proposition \ref{prop::connected},  it has finitely
many connected components $\mathcal{C}_1,\ldots,\mathcal{C}_s$
which are closed in $\RR^n$. Assume that the
components $\mathcal{C}_i$, $2\le i\le s$, do not contain $\{\bfz\}$. For each
$2\le i\le s$, since the component $\cal{C}_i$ is closed and bounded,
the function $\sum_{i=1}^n X_i^2$ reaches its minimum on
$\cal{C}_i$ at a minimizer $u^{(i)}\in\cal{C}_i$. Fix a $R_2\in\RR_+$ such
that $0<R_2<\min_{2\le i\le s}\Vert u^{(i)}\Vert_2$,
then $\Gamma_\RR(f)\cap\mathsf{B}_{R_2}$ is connected.


(iii) We claim that there exists $R_3\in\RR_+$ such that
$\Gamma_\RR(f)\cap\bfV_\RR(f)\cap\mathsf{B}_{R_3}=\{\bfz\}$.
Suppose to the contrary that such $R_3$ does not exist.
By the Curve Selection Lemma, we can find an analytic curve
$\phi \colon [0, \epsilon] \rightarrow \mathbb{R}^n$ such that
	$\phi(0)=\bfz, f(\phi(t)) = 0$ and $\phi(t) \in\Gamma_\RR(f)\backslash\{\bfz\}$
for all $t \in (0, \epsilon]$.
For each $t$, by the definition,
$\nabla f(\phi(t)) = \lambda (t) \phi(t)$ for some
$\lambda(t) \in \mathbb{R}$ and furthermore,
\begin{eqnarray*}
0=\frac{\ud(f\circ \phi)(t)}{\ud t} =
\left\langle \nabla f(\phi(t)), \frac{\ud\phi(t)}{\ud t}\right\rangle
=\lambda (t) \frac{\ud \|\phi(t)\|^2_2}{2\ud t}.
\end{eqnarray*}
By the monotonicity lemma,
$\lambda(t) = 0$ and hence $\nabla f(\phi(t)) = \bfz$ for $0 \le t \ll 1,$
which is a contradiction since $\bfz$ is an isolated real critical point of $f$.

Clearly, $R := \min\{R_1, R_2, R_3\}$ is a faithful radius of $f.$
\end{proof}

\section{Computational aspects of faithful radius}\label{sec::cmptr}
In this section, we present some computational criteria and an algorithm for computing
a faithful radius of the isolated real critical point $\bfz$ of the polynomial $f$.

\subsection{Curve of tangency}
We now recall some background about the tangency variety at a general point
which is studied in \cite{Durfee,VS}. For any $a\in\RR^n$, let
\[
\Gamma_\RR(f,a)=\left\{x\in\RR^n\mid \exists \lambda\in\RR\quad
	\text{s.t.}\ \nabla f(x)=\lambda (x-a)\right\}.
\]
In particular, $\Gamma_\RR(f)=\Gamma_\RR(f,\bfz)$.
Geometrically, the tangency variety $\Gamma_\RR(f,a)$ consists of
all points in $\RR^n$ at which the level set of $f$ is tangent to the sphere in
$\RR^n$ centered in $a$ with radius $\Vert x-a\Vert_2$.

\begin{prop}\cite[Lemma 2.1]{VS}\label{prop::VS}
It holds that
\begin{enumerate}[\upshape (i)]
\item $\Gamma_\RR(f,a)$ is a nonempty, unbounded and semi-algebraic set;
\item There exists a proper algebraic set $\Omega\subseteq\RR^n$ such that for
each $a\in\RR\backslash\Omega$, the set
$\Gamma_\RR(f,a)\backslash\crit_\RR(f)$ is a one-dimensional submanifold of
$\RR^n$.
\end{enumerate}
\end{prop}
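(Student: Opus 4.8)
The plan is to prove the two assertions separately: (i) by a direct compactness argument, and (ii) by the parametric transversality (Sard) theorem after eliminating the multiplier $\lambda$. For (i), I would first note that $\Gamma_\RR(f,a)$ is the common zero set of the polynomials $\frac{\partial f}{\partial X_i}(X_j-a_j)-\frac{\partial f}{\partial X_j}(X_i-a_i)$ for $1\le i<j\le n$ --- the $2\times2$ minors of the $2\times n$ matrix with rows $\nabla f(x)$ and $x-a$ --- so it is a real algebraic, in particular semi-algebraic, set. For nonemptiness and unboundedness, fix an arbitrary $r\in\RR_+$ and maximize $f$ over the sphere $\{x\in\RR^n\mid\Vert x-a\Vert_2=r\}$; this set is compact and a smooth hypersurface, so a maximizer $x_r$ exists and $\nabla f(x_r)$ is normal to the sphere at $x_r$, i.e.\ $\nabla f(x_r)=\lambda(x_r-a)$ for some $\lambda\in\RR$. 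Hence $x_r\in\Gamma_\RR(f,a)$ with $\Vert x_r-a\Vert_2=r$, and letting $r\to\infty$ gives unboundedness.

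For (ii), the key observation is that off $\crit_\RR(f)$ the relation $\nabla f(x)=\lambda(x-a)$ forces $\lambda\neq 0$, so with $\mu:=1/\lambda$ it reads $a=x-\mu\nabla f(x)$. I would therefore study the polynomial map
\[
\Phi\colon\ U:=\bigl(\RR^n\setminus\crit_\RR(f)\bigr)\times(\RR\setminus\{0\})\ \longrightarrow\ \RR^n,\qquad \Phi(x,\mu)=x-\mu\nabla f(x),
\]
whose domain $U$ is an open, hence smooth $(n{+}1)$-dimensional semi-algebraic, submanifold of $\RR^{n+1}$. Let $\Sigma\subseteq U$ be the set where $\ud\Phi$ fails to be surjective; it is semi-algebraic, being cut out by the vanishing of the $n\times n$ minors of the Jacobian of $\Phi$. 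By Sard's theorem the set of critical values $\Phi(\Sigma)$ has Lebesgue measure zero in $\RR^n$, and, being semi-algebraic by the Tarski--Seidenberg theorem, it then has dimension $\le n-1$ (a semi-algebraic set of full dimension $n$ would contain a non-empty open set). Hence its real Zariski closure $\Omega\subseteq\RR^n$ (the zero set of all real polynomials vanishing on $\Phi(\Sigma)$) also has dimension $\le n-1$, so $\Omega$ is a proper algebraic subset of $\RR^n$.

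It remains to verify that every $a\notin\Omega$ works. Such an $a$ lies outside $\Phi(\Sigma)$, hence is a regular value of $\Phi$, so $\Phi^{-1}(a)$ is a smooth $1$-dimensional submanifold of $U$. For $x\in\Gamma_\RR(f,a)\setminus\crit_\RR(f)$ one has $\nabla f(x)\neq\bfz$, and pairing $x-a=\mu\nabla f(x)$ with $\nabla f(x)$ recovers $\mu$ \emph{uniquely} as the rational function $\mu(x)=\langle x-a,\nabla f(x)\rangle/\Vert\nabla f(x)\Vert_2^2$, smooth on $\RR^n\setminus\crit_\RR(f)$. Thus $\Phi^{-1}(a)$ is precisely the graph of $\mu$ over $\Gamma_\RR(f,a)\setminus\crit_\RR(f)$; the projection $(x,\mu)\mapsto x$ then restricts to a bijection $\Phi^{-1}(a)\to\Gamma_\RR(f,a)\setminus\crit_\RR(f)$ whose inverse $x\mapsto(x,\mu(x))$ is smooth, so it is a diffeomorphism. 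Consequently $\Gamma_\RR(f,a)\setminus\crit_\RR(f)$ is a one-dimensional submanifold of the open set $\RR^n\setminus\crit_\RR(f)$, hence of $\RR^n$.

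The step I expect to be the main obstacle is the passage from Sard's ``measure zero'' to ``contained in a proper algebraic set'': this uses that a measure-zero semi-algebraic subset of $\RR^n$ has dimension $<n$, that passing to the (real) Zariski closure does not increase dimension, and that $\Sigma$ and $\Phi(\Sigma)$ are genuinely semi-algebraic. The remaining delicate point is the last step --- that the projection $\Phi^{-1}(a)\to\RR^n$ is an embedding rather than merely an injective immersion --- which is exactly what the explicit smooth formula for $\mu(x)$ secures.
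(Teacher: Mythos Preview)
The paper does not supply its own proof of this proposition; it is quoted verbatim from \cite[Lemma~2.1]{VS}. Your argument is correct and self-contained. For (i) the sphere-extremum argument is the natural one (a small quibble: when $a\notin\crit_\RR(f)$ the vanishing locus of the $2\times 2$ minors is $\Gamma_\RR(f,a)\cup\{a\}$ rather than $\Gamma_\RR(f,a)$ itself, but this does not affect semi-algebraicity, and one can instead appeal directly to Tarski--Seidenberg applied to the defining existential formula).

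For (ii), your inversion $\mu=1/\lambda$, turning the parametric problem into a direct application of Sard to $\Phi(x,\mu)=x-\mu\nabla f(x)$, is slightly more elementary than the route the paper takes for the closely related Theorem~\ref{th::dim}: there the authors verify that the full map $(x,\lambda,P)\mapsto\nabla f(x)-\lambda Px$ is a submersion and then invoke Thom's weak transversality theorem. Your approach exploits the special feature that the parameter $a$ can be solved for explicitly, so ordinary Sard on a single map suffices; the transversality-theorem approach generalizes more readily when the parameter enters in a way that cannot be inverted. The passage from ``measure-zero semi-algebraic'' to ``contained in a proper algebraic set'' via dimension and real Zariski closure is handled correctly, and your explicit formula $\mu(x)=\langle x-a,\nabla f(x)\rangle/\Vert\nabla f(x)\Vert_2^2$ is exactly what is needed to upgrade the projection from an injective immersion to an embedding.
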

Therefore, $\Gamma_\RR(f,a)$ is also called {\itshape curve of tangency}.
Note that for the given $f\in\RR[X]$,
$\bfz$ might not belong to $\Omega$ as in Proposition \ref{prop::VS} and
then the statement $(ii)$ in Proposition \ref{prop::VS} is not necessarily true
for $\Gamma_\RR(f)=\Gamma_\RR(f,\bfz)$.
However, in the following we will show that
$\Gamma_\RR(f)\backslash\crit_\RR(f)$ is indeed a
one-dimensional semi-algebraic set of $\RR^n$ after a generic linear change of the
coordinates of $f$.

For $f\in\RR[X]$ and an invertible matrix
$A\in\RR^{n\times n}$, denote $f^A=f(Ax)$ the polynomial obtained
by applying the change of variables $A$ to $f$.
Denote $\Sn\subset\RR^{n\times n}$ (resp. $\SSn\subset\RR^{n\times n}$) as the set of
symmetric (resp. positive definite) matrices with real number entries.
For any matrix $P\in\RR^{n\times n}$, define
\[
	\Gamma_\RR(f,P)=\{x\in\RR^n\mid \exists \lambda\in\RR,\quad
		\text{s.t.}\ \nabla f(x)=\lambda Px\}.
\]
Given an invertible matrix $A\in\RR^{n\times n}$ and a subset $S\subseteq\RR^n$,
let
\[
	A(S)=\{Ax\mid x\in S\}.
\]
\begin{lemma}\label{lem::trans}
	Given an invertible matrix
	$A\in\RR^{n\times n}$, let $P=A^{-T}A^{-1}$, then we have
	$\Gamma_\RR(f^A)=A^{-1}(\Gamma_\RR(f,P))$ and $\crit_\RR(f^A)=A^{-1}(\crit_\RR(f))$.
\end{lemma}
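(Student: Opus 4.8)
The plan is to unwind the definitions of $f^A$, $\Gamma_\RR(\cdot)$ and $\crit_\RR(\cdot)$ and chase the chain rule. Recall $f^A(x)=f(Ax)$, so by the chain rule $\nabla f^A(x)=A^T\nabla f(Ax)$. I would first handle the critical points: $x\in\crit_\RR(f^A)$ iff $A^T\nabla f(Ax)=\bfz$; since $A$ (hence $A^T$) is invertible, this holds iff $\nabla f(Ax)=\bfz$, i.e. iff $Ax\in\crit_\RR(f)$, i.e. iff $x\in A^{-1}(\crit_\RR(f))$. That disposes of the second identity.

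For the tangency variety, fix $x\in\RR^n$ and set $y=Ax$ (so $x=A^{-1}y$). By definition $x\in\Gamma_\RR(f^A)$ iff there is $\lambda\in\RR$ with $\nabla f^A(x)=\lambda x$, i.e. $A^T\nabla f(Ax)=\lambda x$. Multiplying on the left by $A^{-T}$ (legitimate since $A$ is invertible, and the equation is equivalent to its image under an invertible map), this is equivalent to $\nabla f(y)=\lambda A^{-T}x=\lambda A^{-T}A^{-1}y=\lambda Py$ with $P=A^{-T}A^{-1}$. But that last condition is precisely $y\in\Gamma_\RR(f,P)$. Hence $x\in\Gamma_\RR(f^A)$ iff $Ax\in\Gamma_\RR(f,P)$, i.e. iff $x\in A^{-1}(\Gamma_\RR(f,P))$, which is the first identity.

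There is essentially no obstacle here: the only point requiring a word of care is that multiplying the vector equation $A^T\nabla f(Ax)=\lambda x$ by the invertible matrix $A^{-T}$ yields an \emph{equivalent} equation (and that the scalar $\lambda$ is the \emph{same} on both sides, since we never rescale it), so the two membership conditions match up with the identical quantifier ``$\exists\lambda\in\RR$''. I would also note in passing that $P=A^{-T}A^{-1}\in\SSn$ is symmetric positive definite, which is the reason this change of variables will later be useful, though that observation is not needed for the statement itself.
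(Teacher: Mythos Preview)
Your proposal is correct and follows essentially the same approach as the paper: compute $\nabla f^A(x)=A^T\nabla f(Ax)$ via the chain rule, substitute $y=Ax$, and multiply through by the invertible matrix $A^{-T}$ to pass between the defining conditions of $\Gamma_\RR(f^A)$ and $\Gamma_\RR(f,P)$ (respectively $\crit_\RR(f^A)$ and $\crit_\RR(f)$). The paper presents this as a chain of set equalities rather than an iff argument, but the content is identical.
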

\begin{proof}
By the definition, we have
\[
\begin{aligned}
	\Gamma_\RR(f^A)&=\{x\in\RR^n\mid \exists \lambda\in\RR\quad\text{s.t.}\
\nabla f^A(x)=\lambda x\}\\
	&=\{x\in\RR^n\mid \exists \lambda\in\RR\quad\text{s.t.}\
A^T\nabla f(Ax)=\lambda x\}\\
&=\{A^{-1}y\in\RR^n\mid \exists \lambda\in\RR\quad\text{s.t.}\
\nabla f(y)=\lambda A^{-T}A^{-1}y\}\\
&=A^{-1}(\Gamma_\RR(f,P)).\\
\end{aligned}
\]
Similarly, it holds that $\crit_\RR(f^A)=A^{-1}(\crit_\RR(f))$.
\end{proof}

Let $\mathcal{I}^{n \times n}$ be the set of all invertible $n\times n$ matrices in
$\RR^{n\times n}$.

\begin{theorem}\label{th::dim}
There exists an open and dense semi-algebraic set $\mathcal{U} \subset \mathcal{I}^{n \times n}$
such that for all $A \in \mathcal{U},$
the set $\Gamma_\RR(f^A)\backslash\crit_\RR(f^A)$ is a one-dimensional
semi-algebraic set of $\RR^n$.
\end{theorem}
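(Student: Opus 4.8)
The plan is to exploit Proposition \ref{prop::VS}, which already gives that for a point $a$ outside a proper algebraic set $\Omega\subseteq\RR^n$, the set $\Gamma_\RR(f,a)\backslash\crit_\RR(f)$ is a one-dimensional submanifold of $\RR^n$. The obstacle is that we are not free to choose $a$: we need the distinguished center $\bfz$. But Lemma \ref{lem::trans} shows that changing coordinates by an invertible $A$ turns $\Gamma_\RR(f^A)$ into $A^{-1}(\Gamma_\RR(f,P))$ with $P=A^{-T}A^{-1}$ ranging over all positive definite matrices. So the idea is: although we cannot move the center off $\bfz$, we can replace the \emph{Euclidean} inner product by an arbitrary positive definite one, and show that for a generic choice of that inner product the tangency variety at $\bfz$ becomes one-dimensional. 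This is the ``weighted'' analogue of perturbing the center.

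First I would set up the incidence/parametrized variety over the space of positive definite matrices: consider
\[
	\mathcal{W}=\{(P,x)\in\SSn\times\RR^n\mid \exists\lambda\in\RR,\ \nabla f(x)=\lambda Px\}\setminus(\SSn\times\crit_\RR(f)),
\]
which is semi-algebraic (eliminate $\lambda$ via the $2\times 2$ minors of the matrix $[\nabla f(x)\mid Px]$, and remove $\crit_\RR(f)$). For fixed $x\notin\crit_\RR(f)$, the condition $\nabla f(x)\parallel Px$ is a single linear equation in the entries of $P$ (namely the vanishing of the $2\times 2$ minors, which for fixed $x$ with $\nabla f(x)\neq\bfz$ cut out a linear subspace of codimension $1$ in $\Sn$, intersected with the open cone $\SSn$). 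Hence the fiber of the projection $\mathcal{W}\to\RR^n\setminus\crit_\RR(f)$ over each such $x$ has dimension $\binom{n+1}{2}-1$, so $\dim\mathcal{W}=\binom{n+1}{2}-1+n$. Now project $\pi:\mathcal{W}\to\SSn$. By the semi-algebraic Sard-type theorem / the theorem on dimension of fibers (Theorem \ref{th::decomp} bookkeeping), for $P$ outside a semi-algebraic subset of $\SSn$ of dimension $<\binom{n+1}{2}$, the fiber $\pi^{-1}(P)$ has dimension $\dim\mathcal{W}-\binom{n+1}{2}=n-1$. Taking $n-1=\dim$ down to the case $n\geq 1$: this fiber is precisely $\Gamma_\RR(f,P)\setminus\crit_\RR(f)$, so it is at most one-dimensional; Corollary \ref{cor::notiso} (applied after the coordinate change, which preserves being an isolated critical point) forces dimension exactly $1$.

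Then I would transfer back via Lemma \ref{lem::trans}: the set of ``good'' $P$ is an open dense semi-algebraic subset of $\SSn$; pulling back along the (surjective, semi-algebraic, open) map $A\mapsto A^{-T}A^{-1}$ from $\mathcal{I}^{n\times n}$ onto $\SSn$ yields an open dense semi-algebraic set $\mathcal{U}\subseteq\mathcal{I}^{n\times n}$, and for $A\in\mathcal{U}$ we get $\Gamma_\RR(f^A)\setminus\crit_\RR(f^A)=A^{-1}\big(\Gamma_\RR(f,P)\setminus\crit_\RR(f)\big)$, which is one-dimensional since $A^{-1}$ is a linear isomorphism and hence preserves dimension of semi-algebraic sets. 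I would also need the trivial remarks that $\bfz$ remains an isolated real critical point of $f^A$ (immediate from $\crit_\RR(f^A)=A^{-1}(\crit_\RR(f))$) so that Corollary \ref{cor::notiso} is legitimately available, and that openness/density survives because the preimage of an open dense set under a continuous open surjection is open dense.

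The main obstacle is the genericity step: making rigorous that a generic fiber of $\pi:\mathcal{W}\to\SSn$ has the expected dimension. The clean route is the semi-algebraic ``generic fiber dimension'' theorem — for a semi-algebraic map, the locus where the fiber dimension exceeds $\dim(\text{source})-\dim(\overline{\text{image}})$ is a semi-algebraic set of strictly smaller dimension in the base — which is exactly what lets us conclude $\dim\pi^{-1}(P)\le n-1$ off a lower-dimensional bad set, and combined with the lower bound from Corollary \ref{cor::notiso} pins it to $1$. One must be a little careful that the relevant image is dense in $\SSn$ (equivalently, that the linear equation defining the $P$-fiber over a fixed generic $x$ genuinely meets the open cone $\SSn$, which it does since, e.g., any $P$ for which $Px$ is a prescribed nonzero vector can be perturbed to stay positive definite), but this is a routine verification rather than a genuine difficulty.
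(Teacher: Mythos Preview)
Your overall strategy---set up an incidence variety over $\SSn$, count its dimension by fibering over $x$, then bound the generic fiber over $P$---is sound, but the dimension count is wrong. For fixed $x\notin\crit_\RR(f)$, the condition $\nabla f(x)\parallel Px$ is \emph{not} a single linear equation in $P$: since $\nabla f(x)\neq\bfz$, the parallelism forces $Px$ to lie in the line $\RR\cdot\nabla f(x)$, and the linear map $\Sn\to\RR^n$, $P\mapsto Px$ is surjective when $x\neq\bfz$. Hence this condition has codimension $n-1$ in $\Sn$, not $1$, and the fiber over $x$ has dimension $\binom{n+1}{2}-(n-1)$. With the correct count, $\dim\mathcal{W}=\binom{n+1}{2}+1$, and the generic fiber of $\pi:\mathcal{W}\to\SSn$ has dimension at most $1$, which is what you want; your computed value $n-1$ is correct only when $n=2$, and the sentence ``Taking $n-1=\dim$ down to the case $n\geq 1$\ldots so it is at most one-dimensional'' does not repair this.

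Once the codimension is fixed, the rest of your outline goes through essentially as written: the semi-algebraic fiber-dimension theorem (via Hardt triviality) gives that the set of $P$ with fiber dimension $>1$ is semi-algebraic of dimension $<\binom{n+1}{2}$; its complement contains a Zariski-open (hence Euclidean-open and dense) set, and pulling back along the smooth surjective submersion $A\mapsto (AA^T)^{-1}$ yields the desired $\mathcal{U}\subset\mathcal{I}^{n\times n}$. The lower bound comes from Corollary~\ref{cor::notiso} as you say. By contrast, the paper argues via Thom's weak transversality theorem applied to $F(x,\lambda,P)=\nabla f(x)-\lambda Px$: one checks directly that $\bfz$ is a regular value of $F$ (an explicit Jacobian computation), and transversality then gives that for $P$ off a lower-dimensional set, $F_P^{-1}(\bfz)$ is a one-dimensional submanifold of $\RR^{n+1}$, whose projection to $\RR^n$ is $\Gamma_\RR(f,P)\setminus\crit_\RR(f)$. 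The transversality route is somewhat cleaner in that it yields a smooth manifold structure rather than just a dimension bound, but your incidence-variety approach (with the corrected count) is a legitimate alternative that avoids invoking transversality machinery.
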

\begin{proof}
Clearly, $\SSn$ is an open semi-algebraic subset of $\mathcal{S}^{n \times n} \equiv \mathbb{R}^{\frac{n(n+1)}{2}},$
where we identify $P := (p_{ij})_{n \times n}\in\mathcal{S}^{n \times n}$
with
\[
(p_{11}, \ldots, p_{1n}, p_{22}, \ldots, p_{2n},  \ldots, p_{nn}) \in \mathbb{R}^{\frac{n(n+1)}{2}}.
\]


We first show that $\Gamma_\RR(f,P)\backslash\crit_\RR(f)$ is
a semi-algebraic set of dimension $\le 1$ for almost every $P\in\SSn$.
To do this, we consider the semi-algebraic map
\[
	\begin{aligned}
		F:\ (\RR^n\backslash\crit_\RR(f))\times\RR \times \SSn &\ \rightarrow\ &&\RR^n\\
		(x, \lambda, P)&\ \mapsto \ &&\nabla f(x)-\lambda Px.
	\end{aligned}
\]
We will show that
$\bfz\in\RR^n$ is a regular value of the map $F$.
Take any $(x,\lambda,P)\in F^{-1}(\bfz)$, then $x\neq\bfz$ and
$\lambda\neq 0$. Otherwise, we have $\nabla f(x)=\bfz$ and hence $x\in\crit_\RR(f)$,
a contradiction.
Without loss of generality, we assume that $x_1\neq 0$.
Note that $p_{i j}=p_{j i}$.
Then, a direct computation shows that the Jacobian matrix
$\jac(F)$ of the map $F$ contains the following columns
\[
	-\lambda\cdot\left[
\begin{array}{ccccc}
	x_1 &  x_2 &  x_3 & \cdots &  x_n\\
	0	    &  x_1 & 0           & \cdots & 0\\
	0           & 0           &  x_1 & \cdots & 0\\
	\vdots      & \vdots      & \vdots      &        & \vdots\\
	0           & 0           & 0           & \cdots & x_1\\
\end{array}
\right],
\]
which correspond to the partial derivatives of $F$ with respect to the
variables $p_{1 j}$ for $j=1,\ldots,n$.
Therefore, for all $(x, \lambda, P)\in F^{-1}(\bfz)$, we have
$\rank(\jac(F))=n$ and hence $\bfz$ is a regular value of $F$.
By Thom's weak transversality theorem (\cite{DT}, \cite[Theorem~1.10]{HPh}),
there exists a semi-algebraic set $\Sigma \subset \mathcal{S}^{n \times n}_{++}$ of
dimension $< \frac{n(n+1)}{2}$ such that
for all $P \in \mathcal{S}^{n \times n}_{++} \setminus \Sigma,$  $\bfz$ is a regular value
of the map
\[
\begin{aligned}
	F_P:\ (\RR^n\backslash\crit_\RR(f))\times\RR&\ \rightarrow\ &&\RR^n\\
	(x, \lambda)&\ \mapsto \ &&F(x,\lambda,P).
\end{aligned}
\]
Thus, $F_P^{-1}(\bfz)$ is either empty or a one-dimensional submanifold of $\RR^n$.
Since $\Gamma_\RR(f,P)\backslash\crit_\RR(f)$ is the projection of $F_P^{-1}(\bfz)$
on the first $n$ coordinates, by \cite[Proposition 2.8.6]{realAG},
we have $\dim(\Gamma_\RR(f,P)\backslash\crit_\RR(f))\le 1$.

Next, it is easy to see that
$$\mathcal{S}^{n \times n}_{++} \rightarrow \mathcal{S}^{n \times n}_{++}, \quad P \mapsto P^{-1},$$
 is a semi-algebraic homeomorphism. Hence,
 $$\Sigma^{-1} := \{P^{-1} \in \mathcal{S}^{n \times n}_{++}  \ | \ P \in \Sigma\} \subset \mathbb{R}^{\frac{n(n+1)}{2}}$$
 is a semi-algebraic set of dimension $< \frac{n(n+1)}{2}.$
 Consequently, by \cite[Lemma 1.4]{HPh},
 there exists a non-constant polynomial
 $\mathcal{F} \colon \mathbb{R}^{\frac{n(n+1)}{2}} \rightarrow \mathbb{R}$ such that
$$\Sigma^{-1}  \subset \{Q \in \mathcal{S}^{n \times n}_{++}  \ | \  \mathcal{F}(Q)  = 0\}.$$
Note that the corresponding
$$\mathcal{I}^{{n \times n}} \rightarrow \mathcal{S}^{n \times n}_{++}, \quad A \mapsto A A^T,$$
is a polynomial map. Thus,
$$\{A \in \mathcal{I}^{{n \times n}} \ | \ \mathcal{F}(AA^T) = 0\}$$
is an algebraic set. It follows that
$\mathcal{U} := \{A \in \mathcal{I}^{{n \times n}} \ | \ \mathcal{F}(AA^T) \ne 0\}$
is an open and dense semi-algebraic subset of $\mathcal{I}^{n \times n}.$
Furthermore, by the definition, for all $A \in \mathcal{U}$,
we have $P := (AA^T)^{-1} \in  \mathcal{S}^{n \times n}_{++} \setminus \Sigma$ and hence,
$\dim(\Gamma_\RR(f^A)\backslash\crit_\RR(f^A))\le 1$ by Lemma~\ref{lem::trans}.
Since $A\in\mathcal{U}$ is invertible, we have $\dim(\Gamma_\RR(f^A)\backslash\crit_\RR(f^A))\ge 1$
by Corollary \ref{cor::notiso} and then the conclusion follows.
\end{proof}

\begin{remark}{\rm
(i) The technique of a generic linear change of variables was also used in 
\cite{SafeyElDin2003} to show the dimension of polar varieties;
(ii) We may also use the new inner product
$\langle x, x' \rangle_P := \langle Px, Px' \rangle$
(and the corresponding  norm $\sqrt{\langle x, x' \rangle_P}$)
for some generic $P\in\mathcal{S}^{n \times n}_{++}$
instead of using a generic linear change of variables $y= Ax.$
In fact, it is not hard to see that with this new inner product,
$\Gamma_\RR(f)\backslash\crit_\RR(f)$ is also a one-dimensional
semi-algebraic set of $\RR^n.$
}\end{remark}

We illustrate the result in Theorem \ref{th::dim} by the following simple example.
\begin{example}
Consider the polynomial $f(X_1,X_2)=X_1^2+X_2^2$.
We have $\Gamma_\RR(f)=\RR^2$. However, if we make a linear
change of variables and let $f^A=(a_{1,1}X_1+a_{1,2}X_2)^2+(a_{2,1}X_1+a_{2,2}X_2)^2$,
then
$\Gamma_\RR(f^A)=\{(x_1,x_2)\in\RR^2\mid
	(a_{1,1}a_{1,2}+a_{2,1}a_{2,2})(x_2^2-x_1^2)
	+(a_{1,1}^2+a_{2,1}^2-a_{1,2}^2-a_{2,2}^2)x_1x_2=0\}$.
Clearly, $\dim(\Gamma_\RR(f^A)\backslash\crit_\RR(f^A))=1$ whenever
$a_{1,1}a_{1,2}+a_{2,1}a_{2,2}\neq 0$ or
$a_{1,1}^2+a_{2,1}^2-a_{1,2}^2-a_{2,2}^2\neq 0$.
\end{example}

For $f\in\RR[X]$ and any matrix $P\in\RR^{n\times n}$, let
\begin{equation}
\begin{aligned}
\Gamma_\CC(f)&:=\left\{x\in\CC^n\mid \exists \lambda\in\CC
	\quad\text{s.t.}\ \nabla f(x)=\lambda x\right\},\\
\Gamma_\CC(f,P)&:=\left\{x\in\CC^n\mid \exists \lambda\in\CC
	\quad\text{s.t.}\ \nabla f(x)=\lambda Px\right\}.
\end{aligned}
\end{equation}
Recall that $\crit_\CC(f)$ and $\crit_\CC(f^A)$ denote the sets of complex
critical points of $f$ and $f^A$, respectively.
As in Lemma \ref{lem::trans}, we still have
\begin{lemma}\label{lem::transC}
	Given an invertible matrix
	$A\in\RR^{n\times n}$, let $P=A^{-T}A^{-1}$, then we have
	$\Gamma_\CC(f^A)=A^{-1}(\Gamma_\CC(f,P))$ and $\crit_\CC(f^A)=A^{-1}(\crit_\CC(f))$.
\end{lemma}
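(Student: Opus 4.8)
The plan is to mimic the proof of Lemma~\ref{lem::trans} essentially verbatim, replacing $\RR$ by $\CC$ throughout; no new idea is needed, only the observation that every step in that argument is purely algebraic and so is insensitive to the choice of ground field. In particular, the curve‑selection/semi‑algebraic machinery plays no role here: this is just a substitution identity.

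First I would record the chain‑rule identity $\nabla f^A(x) = A^T\nabla f(Ax)$. Since $f^A = f(Ax)$ is obtained from $f$ by the linear substitution $A$, this is an identity of polynomial vectors (each component lies in $\RR[X]$), hence it holds for every $x\in\CC^n$ as well; no smoothness hypothesis is required because the gradient of a polynomial is again a polynomial. Consequently, for $x\in\CC^n$ we have $x\in\Gamma_\CC(f^A)$ if and only if there is $\lambda\in\CC$ with $A^T\nabla f(Ax)=\lambda x$. Putting $y:=Ax$ (so $x=A^{-1}y$, using that the real invertible matrix $A$ is a fortiori invertible over $\CC$) and multiplying through by $A^{-T}$, this is equivalent to $\nabla f(y)=\lambda A^{-T}A^{-1}y=\lambda Py$, i.e.\ to $y\in\Gamma_\CC(f,P)$. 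Hence $\Gamma_\CC(f^A)=A^{-1}(\Gamma_\CC(f,P))$, exactly as in the real case. For the second identity, $x\in\crit_\CC(f^A)$ means $\nabla f^A(x)=\bfz$, i.e.\ $A^T\nabla f(Ax)=\bfz$; since $A^T$ is invertible this is equivalent to $\nabla f(Ax)=\bfz$, that is, $Ax\in\crit_\CC(f)$, i.e.\ $x\in A^{-1}(\crit_\CC(f))$.

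There is essentially no obstacle. The only point worth a word of care is that $P=A^{-T}A^{-1}$ is built from the \emph{real} matrix $A$, so $P\in\SSn$ just as before and the manipulation above is literally the same formal computation, now carried out over $\CC$. I would therefore write ``The proof is identical to that of Lemma~\ref{lem::trans}'' and, if a self‑contained argument is preferred, simply reproduce the short displayed chain of equalities from that proof with $\RR$ replaced by $\CC$.
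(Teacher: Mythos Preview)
Your proposal is correct and matches the paper's approach exactly: the paper gives no separate proof for Lemma~\ref{lem::transC}, merely introducing it with ``As in Lemma~\ref{lem::trans}, we still have,'' which is precisely your suggestion to reproduce the displayed chain of equalities from Lemma~\ref{lem::trans} with $\RR$ replaced by $\CC$. Your additional remarks about the chain rule being a polynomial identity and $A$ remaining invertible over $\CC$ are accurate and make the transfer explicit.
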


\begin{cor}\label{cor::dim1}
There exists an open and dense semi-algebraic set
$\mathcal{U} \subset \mathcal{I}^{n \times n}$ such that for all
$A \in \mathcal{U},$
the Zariski closure
${\overline{\Gamma_\CC(f^A)\backslash\crit_\CC(f^A)}}^\mathcal{Z}$
is a one-dimensional algebraic variety in $\CC^n$.
\end{cor}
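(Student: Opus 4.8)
The plan is to imitate the proof of Theorem~\ref{th::dim} almost verbatim, but carrying out the whole transversality argument over $\CC$ instead of over $\RR$, and then transfer the genericity from $\SSn$ back to $\mathcal{I}^{n\times n}$ exactly as in the real case. First I would set up the complex analogue of the map $F$, namely
\[
	\begin{aligned}
		F_\CC:\ (\CC^n\backslash\crit_\CC(f))\times\CC \times \SSn &\ \rightarrow\ &&\CC^n\\
		(x, \lambda, P)&\ \mapsto \ &&\nabla f(x)-\lambda Px,
	\end{aligned}
\]
where $P$ still ranges only over the \emph{real} positive definite matrices $\SSn$ (so the parameter space is again $\RR^{n(n+1)/2}$, and the Thom weak transversality machinery and \cite[Lemma 1.4, Lemma 1.10]{HPh} apply unchanged). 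The same column computation on $\jac(F_\CC)$ — picking the coordinate $x_i\neq 0$ and looking at the derivatives with respect to $p_{1j}$, $j=1,\dots,n$ — shows that $\bfz$ is a regular value of $F_\CC$: indeed on $F_\CC^{-1}(\bfz)$ one has $x\neq\bfz$ (else $x\in\crit_\CC(f)$) and $\lambda\neq 0$ (else again $\nabla f(x)=\bfz$), so the displayed $n\times n$ block $-\lambda[\,\cdots\,]$ is invertible over $\CC$, giving $\rank_\CC(\jac(F_\CC))=n$.

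Next, Thom's weak transversality theorem (in the semi-algebraic form used in the paper) produces a semi-algebraic set $\Sigma\subset\SSn$ of real dimension $<\frac{n(n+1)}{2}$ such that for every $P\in\SSn\setminus\Sigma$ the restricted map $(F_\CC)_P$ has $\bfz$ as a regular value, hence $(F_\CC)_P^{-1}(\bfz)$ is either empty or a complex one-dimensional submanifold of $\CC^n$, i.e.\ a smooth affine curve. Since $\Gamma_\CC(f,P)\backslash\crit_\CC(f)$ is the projection of $(F_\CC)_P^{-1}(\bfz)$ onto the first $n$ coordinates, its Zariski closure in $\CC^n$ has dimension $\le 1$. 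Then I would run the identical transfer argument from the end of the proof of Theorem~\ref{th::dim}: the inversion $P\mapsto P^{-1}$ is a semi-algebraic homeomorphism of $\SSn$, so $\Sigma^{-1}$ has real dimension $<\frac{n(n+1)}{2}$; by \cite[Lemma 1.4]{HPh} it lies in the zero set of a non-constant polynomial $\mathcal{F}$; and since $A\mapsto AA^T$ is a polynomial map $\mathcal{I}^{n\times n}\to\SSn$, the set $\mathcal{U}:=\{A\in\mathcal{I}^{n\times n}\mid\mathcal{F}(AA^T)\neq 0\}$ is open and dense semi-algebraic, and for $A\in\mathcal{U}$ one has $P:=(AA^T)^{-1}\in\SSn\setminus\Sigma$. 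By Lemma~\ref{lem::transC}, $\Gamma_\CC(f^A)\backslash\crit_\CC(f^A)=A^{-1}\big(\Gamma_\CC(f,P)\backslash\crit_\CC(f)\big)$, and since $A^{-1}$ is a linear (hence biregular) automorphism of $\CC^n$ it preserves Zariski closures and dimension, so $\dim\,{\overline{\Gamma_\CC(f^A)\backslash\crit_\CC(f^A)}}^\mathcal{Z}\le 1$. For the reverse inequality I would note that the complex set contains the real one, $\Gamma_\RR(f^A)\backslash\crit_\RR(f^A)\subseteq\Gamma_\CC(f^A)\backslash\crit_\CC(f^A)$, and by Theorem~\ref{th::dim} (after possibly intersecting the two generic sets $\mathcal{U}$) the real set has dimension $\ge 1$, so its Zariski closure, being contained in the complex one, forces $\dim\ge 1$; alternatively invoke Corollary~\ref{cor::notiso} directly. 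Combining the two bounds gives dimension exactly $1$.

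The only genuinely delicate point — and the one I would be most careful about — is the regularity-of-$\bfz$ step over $\CC$: one must check that the Thom weak transversality / Sard-type statement being quoted from \cite{HPh} is stated for semi-algebraic (real-analytic) families, which it is, and that it legitimately applies when the fibers are complex submanifolds, i.e.\ by viewing $\CC^n\cong\RR^{2n}$ and the holomorphic map $F_\CC$ as a real-analytic semi-algebraic map with the real parameter space $\SSn$. The rank computation itself is harmless because the exhibited $n$ columns already span $\CC^n$ over $\CC$ whenever $\lambda x_i\neq 0$, so no extra case analysis beyond the real proof is needed. Everything else is a routine restatement, so I expect this corollary to follow with essentially no new ideas once the complexification of the transversality argument is spelled out.
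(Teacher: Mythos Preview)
Your overall strategy mirrors the paper's, but there is a real gap in the transversality step. You keep the parameter $P$ in the \emph{real} cone $\SSn$ and then assert that the $n$ columns $\partial F_\CC/\partial p_{1j}$ give ``$\rank_\CC(\jac(F_\CC))=n$''. That is true as a statement about $\CC$-linear independence, but it is not what is needed: to apply the real (semi-algebraic) form of Thom's weak transversality theorem with target $\CC^n\cong\RR^{2n}$, you must show that the \emph{real} differential of $F_\CC$ is surjective onto $\RR^{2n}$, i.e.\ has real rank $2n$. The $n$ real columns you exhibit span only an $n$-dimensional real subspace of $\RR^{2n}$; they span $\CC^n$ over $\CC$, but you cannot multiply by $i$ when the parameters $p_{1j}$ are real. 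Throwing in the holomorphic $(x,\lambda)$-derivatives does not obviously close the gap either, since their image is a (possibly proper) $\CC$-subspace of $\CC^n$, and the sum of a proper $\CC$-subspace with a real $n$-plane need not be all of $\RR^{2n}$. So the sentence ``the rank computation itself is harmless'' is precisely where the argument breaks.

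The paper sidesteps this by enlarging the parameter space to all \emph{complex} symmetric matrices $\mathcal{S}_\CC^{n\times n}\cong\CC^{n(n+1)/2}$. Then $F$ is holomorphic in every variable, the same $n$ columns give complex rank $n$ (hence automatically real rank $2n$), and the algebraic/complex version of Thom transversality (the paper cites \cite[Proposition~B.3]{SS2013} in addition to \cite{DT,HPh}) yields a Zariski-closed bad set $\Sigma_\CC\subset\mathcal{S}_\CC^{n\times n}$. Only afterwards does one intersect with the real cone, setting $\Sigma:=\Sigma_\CC\cap\SSn$, which is semi-algebraic of dimension $<n(n+1)/2$; from there the transfer to $\mathcal{I}^{n\times n}$ via $P\mapsto P^{-1}$ and $A\mapsto AA^T$ proceeds exactly as you describe, and the lower bound $\dim\ge 1$ comes from Corollary~\ref{cor::notiso} as you note. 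In short, your plan is right in outline, but you must complexify the parameter space before running the rank argument.
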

\begin{proof}
Denote $\mathcal{S}_\CC^{n\times n}$ as the set of symmetric matrices
in $\CC^{n\times n}$, which can be identified with the space $\CC^{\frac{n(n+1)}{2}}$.
By similar arguments as in Theorem \ref{th::dim}, it is easy to see that
$\bfz$ is a regular value of the map
\[
	\begin{aligned}
		F:\ (\CC^n\backslash\crit_\CC(f))\times\CC\times
		\mathcal{S}_\CC^{n\times n}&\ \rightarrow\ &&\CC^n\\
		(x, \lambda, P)&\ \mapsto \ &&\nabla f(x)-\lambda Px.
	\end{aligned}
\]
Then according to Thom's weak transversality theorem
(cf. \cite{DT}, \cite[Theorem~1.10]{HPh}, and \cite[Proposition B.3]{SS2013}),
there exists a Zariski closed subset
$\Sigma_\CC\subset\mathcal{S}_\CC^{n\times n}$ such that for all
$P\in\mathcal{S}_\CC^{n\times n}\backslash\Sigma_\CC$,
$\bfz$ is a regular value of the map
\[
\begin{aligned}
	F_P:\ (\CC^n\backslash\crit_\CC(f))\times\CC&\ \rightarrow\ &&\CC^n\\
	(x, \lambda)&\ \rightarrow\ &&F(x,\lambda,P).
\end{aligned}
\]
It follows that $F^{-1}_P(\bfz)$ is either empty or a one-dimensional
quasi-affine set of $\CC^{n+1}$.
Since $\overline{\Gamma_\CC(f,P)\backslash\crit_\CC(f)}^{\mathcal{Z}}$
is the Zariski closure of
the projection of $F^{-1}_P(\bfz)$ on the first $n$ coordinates,
we have $\dim(\overline{
\Gamma_\CC(f,P)\backslash\crit_\CC(f)}^{\mathcal{Z}})\le 1$ for
all $P\in\mathcal{S}_\CC^{n\times n}\backslash\Sigma_\CC$.
Let $\Sigma=\Sigma_\CC\cap\SSn$, then it is clear that
$\Sigma\subset\SSn$ is a semi-algebraic set of dimension $<\frac{n(n+1)}{2}$
and $\dim(\overline{
\Gamma_\CC(f,P)\backslash\crit_\CC(f)}^{\mathcal{Z}})\le 1$ for
all $P\in\SSn\backslash\Sigma$.
Hence, the conclusion follows by similar arguments as in the proof
of Theorem \ref{th::dim}.
\end{proof}


\subsection{Sufficient criteria for faithful radius}

For a given $\mathscr{R}\in\RR_+$, we consider the following condition
\begin{condition}\label{con::curve}
For any $\bfz\neq u\in \Gamma_\RR(f)$ with $\Vert u\Vert_2<\mathscr{R}$,
there exist a neighborhood $\mathcal{O}_u\subset\mathsf{B}_\mathscr{R}$
of $u$,
a differentiable map $\phi(t): (a,b)\rightarrow\RR^n$ and $\bar{t}\in(a,b)$
such that $\phi((a,b))=\Gamma_\RR(f)\cap\mathcal{O}_u$,
$\phi(\bar{t})=u$ and
\begin{equation}\label{eq::phi}
	\frac{\ud \left( \sum_{i=1}^n\phi^2_i \right)}{\ud t}(\bar{t})\neq 0.
\end{equation}
%
\end{condition}

\begin{theorem}\label{th::main2}
	Suppose that $\mathscr{R}\in\RR_+$ satisfies Condition \ref{con::curve}
	and $0<R<\mathscr{R}$, then $\Gamma_\RR(f)\cap\mathsf{B}_R$ is connected.
Moreover, if $R$ is an isolation radius,
then
$\Gamma_\RR(f)\cap \bfV_{\RR}(f)\cap\mathsf{B}_R=\{\bfz\}$ and hence $R$ is a faithful
radius.
\end{theorem}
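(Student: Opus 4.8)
The plan is to prove the two assertions separately, both relying on Condition~\ref{con::curve} and the ideas already used in the proof of Theorem~\ref{th::main}. For the connectedness of $\Gamma_\RR(f)\cap\mathsf{B}_R$, I would argue by contradiction using Proposition~\ref{prop::connected}: if it were disconnected, it would have a connected component $\mathcal{C}$ not containing $\bfz$; since $\Gamma_\RR(f)\cap\mathsf{B}_R$ is closed and bounded, so is $\mathcal{C}$, so the function $\sum_{i=1}^n X_i^2$ attains a positive minimum on $\mathcal{C}$ at some point $u$. The point $u$ lies in $\Gamma_\RR(f)$ with $0<\Vert u\Vert_2\le R<\mathscr{R}$, so Condition~\ref{con::curve} applies and gives a differentiable local parametrization $\phi(t)$ of $\Gamma_\RR(f)$ near $u$ with $\phi(\bar t)=u$ and $\frac{\ud(\sum_i\phi_i^2)}{\ud t}(\bar t)\neq 0$. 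But then $\sum_i\phi_i^2$ is strictly monotone near $\bar t$, so moving along $\phi$ in the decreasing direction produces points of $\Gamma_\RR(f)$ strictly closer to the origin than $u$; a small enough such displacement stays inside $\mathcal{C}$ (because $\phi((a,b))=\Gamma_\RR(f)\cap\mathcal{O}_u$ is a connected subset of $\Gamma_\RR(f)\cap\mathsf{B}_R$ meeting $\mathcal{C}$, hence contained in $\mathcal{C}$), contradicting the minimality of $\Vert u\Vert_2$ on $\mathcal{C}$.

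For the second assertion, assume in addition $R$ is an isolation radius and suppose $\Gamma_\RR(f)\cap\bfV_\RR(f)\cap\mathsf{B}_R\neq\{\bfz\}$, say it contains some $u\neq\bfz$. Among all such points choose $u$ with $\Vert u\Vert_2$ minimal (again possible by closedness and boundedness), and note $0<\Vert u\Vert_2\le R<\mathscr{R}$. Apply Condition~\ref{con::curve} at $u$ to get the local parametrization $\phi$. Here I would combine two facts: first, by the KKT/chain-rule computation already appearing in the proof that a faithful radius exists, along any differentiable curve in $\Gamma_\RR(f)$ one has $\frac{\ud(f\circ\phi)}{\ud t}=\lambda(t)\,\frac12\frac{\ud(\sum_i\phi_i^2)}{\ud t}$ where $\nabla f(\phi(t))=\lambda(t)\phi(t)$; second, since $R$ is an isolation radius, $\lambda(t)\neq 0$ for $t$ near $\bar t$ with $\phi(t)\neq\bfz$ (otherwise $\phi(t)\in\crit_\RR(f)\cap\mathsf{B}_R$). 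Together with \eqref{eq::phi}, this forces $\frac{\ud(f\circ\phi)}{\ud t}(\bar t)\neq 0$, so $f\circ\phi$ is strictly monotone near $\bar t$ and hence changes sign; thus there are points of $\Gamma_\RR(f)\cap\mathcal{O}_u$ arbitrarily close to $u$ on which $f$ is nonzero and, more importantly, by continuity of $\Vert\cdot\Vert_2$ and $f$, I can track along $\phi$ toward the origin to reach a new zero of $f$ on $\Gamma_\RR(f)$ of strictly smaller norm. The cleanest way to close this: use the connectedness of $\Gamma_\RR(f)\cap\mathsf{B}_R$ just proved to join $\bfz$ to $u$ by a path $\psi$ in $\Gamma_\RR(f)\cap\mathsf{B}_R$; then $f\circ\psi$ vanishes at both endpoints, and the monotonicity of $f$ along $\Gamma_\RR(f)$ away from $\bfz$ (coming from $\lambda\neq 0$ and the sign of $\frac{\ud}{\ud t}\sum_i\phi_i^2$) will be contradicted, exactly as in the argument of Theorem~\ref{th::main}.

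Actually the most economical route reuses Theorem~\ref{th::main}'s proof almost verbatim: having established that $\Gamma_\RR(f)\cap\mathsf{B}_R$ is connected, take $u\neq\bfz$ in $\Gamma_\RR(f)\cap\bfV_\RR(f)\cap\mathsf{B}_R$, pick a path $\phi\colon[a,b]\to\Gamma_\RR(f)\cap\mathsf{B}_R$ with $\phi(a)=\bfz$, $\phi(b)=u$, $\bfz\notin\phi((a,b))$. On $(a,b]$ the composite $f\circ\phi$ is real-analytic-by-pieces (or at least the monotonicity lemma applies on small intervals), and on each subinterval where $\phi$ is given by the Condition~\ref{con::curve} parametrization, $f\circ\phi$ is strictly monotone because $\frac{\ud(f\circ\phi)}{\ud t}=\lambda\cdot\frac12\frac{\ud}{\ud t}\sum_i\phi_i^2$ with both factors nonzero. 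Hence $f\circ\phi$ cannot return to the value $0=f(u)$ unless it is identically zero near $b$, which would force $\phi(t)\in\crit_\RR(f)\cap\mathsf{B}_R=\{\bfz\}$ near $b$, contradicting $\bfz\notin\phi((a,b))$. Either way we reach a contradiction, so $\Gamma_\RR(f)\cap\bfV_\RR(f)\cap\mathsf{B}_R=\{\bfz\}$, and the two conditions of Definition~\ref{def::faithr} hold, so $R$ is a faithful radius.

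\textbf{Main obstacle.} The delicate point is the bookkeeping that lets one "track toward the origin" using only a \emph{local} parametrization from Condition~\ref{con::curve}: one must argue that the pieces glue, that $\lambda(t)$ stays nonzero (which is where the isolation-radius hypothesis is essential), and that the sign/monotonicity statements are compatible along a global path. Handling the possibility that $\phi$ is merely differentiable (not analytic), so that "strictly monotone" must be extracted from $\frac{\ud}{\ud t}\neq 0$ at a single point plus a covering/compactness argument along $[a,b]$, rather than from a clean monotonicity lemma, is the technically fussy part; everything else is a routine repetition of the reasoning in Theorem~\ref{th::main} and in the existence-of-faithful-radius proof.
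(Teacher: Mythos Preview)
Your proof of connectedness is correct and essentially identical to the paper's: a component not containing $\bfz$ would carry a positive minimum of $\Vert X\Vert_2^2$, and Condition~\ref{con::curve} at that minimizer contradicts~\eqref{eq::phi}.

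For the second assertion, however, you over-engineer the argument, and the ``main obstacle'' you flag---gluing local parametrizations into a global monotonicity statement---is entirely avoidable. The paper's route is this: having joined $\bfz$ to a hypothetical $v\neq\bfz$ with $f(v)=0$ by a path in $\Gamma_\RR(f)\cap\mathsf{B}_R$, the continuous function $f$ along that path takes the value $0$ at both endpoints, so it has a local extremum at some \emph{interior} point $u$ of the path. Now apply Condition~\ref{con::curve} \emph{once}, at this single $u$: since $\phi$ parametrizes $\Gamma_\RR(f)\cap\mathcal{O}_u$, the function $f\circ\phi$ has a local extremum at $\bar t$, so its derivative there vanishes. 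But your own chain-rule identity gives
\[
\frac{\ud(f\circ\phi)}{\ud t}(\bar t)=\frac{\lambda}{2}\,\frac{\ud\!\left(\sum_i\phi_i^2\right)}{\ud t}(\bar t),
\]
with $\lambda\neq 0$ (because $R$ is an isolation radius and $u\neq\bfz$) and the second factor nonzero by~\eqref{eq::phi}. Contradiction. No covering, no compactness, no piecing-together is needed.

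By contrast, your second paragraph's ``track along $\phi$ toward the origin to reach a new zero of $f$ of strictly smaller norm'' is not justified: local monotonicity at $\bar t$ only says $f$ changes sign there, not that it returns to zero nearby, so the minimal-norm choice of $u$ gives nothing. Your third paragraph has the right path but then tries to establish monotonicity of $f$ along the \emph{whole} path, which is precisely the gluing headache you worry about---and which the single-extremum argument sidesteps completely.
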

\begin{proof}
	Suppose that $\Gamma_\RR(f)\cap\mathsf{B}_R$ is not connected, then it
	has a connected component $\mathcal{C}$ such that $\bfz\not\in\mathcal{C}$.
	Since $\Gamma_\RR(f)\cap\mathsf{B}_R$ is closed, $\cal{C}$ is closed by
	Proposition \ref{prop::connected}. Then,
	the function $\Vert X\Vert_2^2$ reaches its minimum on
$\cal{C}$ at a minimizer $u\in\cal{C}$. By the assumption, there exist
a neighborhood $\mathcal{O}_u$ of $u$ and
a differentiable mapping $\phi(t): (a,b)\rightarrow\RR^n$ and
$\bar{t}\in(a,b)$ such that
$\phi((a,b))=\Gamma_\RR(f)\cap\mathcal{O}_u$ and
$\phi(\bar{t})=u$.
By choosing $a, b$ near enough to $\bar{t}$, we may assume that
$\phi((a,b))\subseteq\mathcal{C}\cap\mathcal{O}_u$.
Then, the function $\sum_{i=1}^n\phi_i^2$ reaches its local
minimum at $\bar{t}$, which contradicts $(\ref{eq::phi})$. Hence,
$\Gamma_\RR(f)\cap\mathsf{B}_R$ is connected.

Now suppose that $R$ is also an isolation radius. Assume to the contrary that
there exists
$\bfz\neq v\in \Gamma_\RR(f)\cap \bfV_\RR(f)\cap \mathsf{B}_R.$
Since $\Gamma_\RR(f)\cap\mathsf{B}_R$ is connected, there exists a path
connecting $\bfz$ and $v$. Then, $f$ has a local extremum on a relative
interior of this path, say $u$. By the assumption,
there exists a differentiable mapping $\phi(t)$ on $(a,b)$ and
$\bar{t}\in(a,b)$ as described in the statement.
Then the differentiable function $f(\phi(t))$ reaches a local extremum at
$\bar{t}$.  By the mean value theorem,
$$0=\frac{\ud f(\phi)}{\ud t}(\bar{t}).$$
On the other hand, since $R$ is an isolation radius,
$\phi(\bar{t}) \in \Gamma_\RR(f) \setminus\crit_\RR(f)$ and
hence there exists $\lambda \ne 0$ such that
$$\frac{\partial f}{\partial x_i}(\phi(\bar{t})) = \lambda \phi_i(\bar{t}), \quad \textrm{ for } i = 1, \ldots, n.$$
Therefore,
\[
0=\frac{\ud f(\phi)}{\ud t}(\bar{t})=\sum_{i=1}^n \frac{\partial f}{\partial x_i}(\phi(\bar{t})) \frac{\ud\phi_i}{\ud t}(\bar{t})
=\sum_{i=1}^n \lambda \phi_i(\bar{t}) \frac{\ud\phi_i}{\ud t}(\bar{t})
=\lambda \frac{\ud \left( \sum_{i=1}^n\phi^2_i \right)}{2\ud t}(\bar{t}),
\]
which contradicts (\ref{eq::phi}). Therefore $\Gamma_\RR(f)\cap\bfV_\RR(f)\cap\mathsf{B}_R=\{\bfz\}.$
%
%
\end{proof}

According to Theorem \ref{th::main2},
if we can compute a $\mathscr{R}\in\RR_+$ satisfying Condition
\ref{con::curve} and an isolation radius $R$ of $\bfz$ is given, then any $r\in\RR_+$
with $r<\min\{R,\mathscr{R}\}$ is a faithful radius of $\bfz$.
Hence, we next show that how to compute such a $\mathscr{R}$.

For a given ideal
$I\subseteq\RR[X]$ with $\dim(I)=1$,
compute its equidimensional decomposition
$I=I^{(0)}\cap I^{(1)}$ where $\dim(I^{(i)})=i$
for $i=0,1$. Compute the radical ideal
$\sqrt{I^{(1)}}=\langle g_1,\ldots,g_s\rangle$ with generators
$g_1,\ldots,g_s\in\RR[X].$ Note that there are efficient algorithms for the equidimensional
decomposition of $I$ such that
$I^{(0)}$ and $I^{(1)}$ are themselves radical (c.f. \cite[Section 3]{AUBRY2002543}
and \cite[Algorithm 4.4.9]{singular}).
Recall that $\Vert X\Vert_2^2=X_1^2+\cdots+X_n^2$.
Denote $\mathscr{D}$ as the set of the determinants of
the Jacobian matrices $\jac\left(g_{i_1},\ldots,g_{i_{n-1}},\Vert X\Vert_2^2\right)$
for all $\{i_1,\ldots,i_{n-1}\}\subset\{1, \ldots, s \}$.
(Note that $s \ge n - 1$ because $\dim(I^{(1)}) = 1.$) Define
\begin{equation}\label{eq::g}
	\begin{aligned}
		\mathscr{R}_{I^{(0)}}
		&:=\min\{r\in\RR\backslash\{0\}\mid
			\exists x\in\bfV_\CC(I^{(0)}), \text{s.t. } x_1^2+\cdots+x_n^2=r^2\},\\
		\Delta_{I^{(1)}}&:=\{g_1,\ldots,g_s\}\cup\mathscr{D},\\
		\mathscr{R}_{I^{(1)}}&:=\inf\{r\in\RR\backslash\{0\}\mid
		\exists x\in\bfV_\CC(\Delta_{I^{(1)}}), \text{s.t. } x_1^2+\cdots+x_n^2=r^2\},\\
	\mathscr{R}_I&:=\min\{\mathscr{R}_{I^{(0)}}, \mathscr{R}_{I^{(1)}}\}.
\end{aligned}
\end{equation}

We have (see also \cite[Corollary~2.8]{Milnor1968}).
\begin{lemma}\label{lem::cs}
If $\dim(I)=1$,
then $\mathscr{R}_I>0$.
\end{lemma}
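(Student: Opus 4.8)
The statement $\mathscr{R}_I > 0$ asserts that, among the finitely or infinitely many candidate radii collected in $(\ref{eq::g})$, the infimum over the nonzero ones is strictly positive; equivalently, the relevant algebraic sets do not accumulate at the origin along a sequence of nonzero points with vanishing norm. The plan is to prove this separately for $\mathscr{R}_{I^{(0)}}$ and $\mathscr{R}_{I^{(1)}}$, since $\mathscr{R}_I = \min\{\mathscr{R}_{I^{(0)}}, \mathscr{R}_{I^{(1)}}\}$.

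For the zero-dimensional piece $I^{(0)}$: the variety $\bfV_\CC(I^{(0)})$ is finite, so the set of values $\{x_1^2+\cdots+x_n^2 \mid x\in\bfV_\CC(I^{(0)})\}$ is a finite subset of $\CC$. Hence the set of nonzero $r$ with $r^2$ equal to one of these values is finite, and its minimum is a strictly positive real number (or the set is empty, in which case $\mathscr{R}_{I^{(0)}} = +\infty$ by convention and the claim is trivial). This step is routine.

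For the one-dimensional piece $I^{(1)}$, which is the substantive part: I would argue by contradiction. Suppose $\mathscr{R}_{I^{(1)}} = 0$, i.e., there is a sequence of nonzero points $x^{(k)}\in\bfV_\CC(\Delta_{I^{(1)}})$ with $\|x^{(k)}\|^2 \to 0$. Restricting attention to real points is not immediate since $\bfV_\CC$ is complex; instead I would pass to the real semi-algebraic set defined by the real and imaginary parts of the generators $g_1,\ldots,g_s$ together with the determinants in $\mathscr{D}$, viewed in $\RR^{2n}$ (or equivalently invoke that a complex algebraic set having $\bfz$ as a non-isolated point in the norm sense yields, after the standard real-part construction, a real semi-algebraic set with $\bfz$ in its closure). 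Then $\bfz$ lies in the closure of the nonzero locus of this semi-algebraic set, so by the Curve Selection Lemma there is an analytic semi-algebraic curve $\varphi\colon[0,\epsilon]\to\CC^n$ with $\varphi(0)=\bfz$ and $\varphi(t)\in\bfV_\CC(\Delta_{I^{(1)}})\setminus\{\bfz\}$ for $t\in(0,\epsilon]$. The key geometric input is that the generators $g_i$ all vanish on $\sqrt{I^{(1)}}$ while the Jacobian determinants $\jac(g_{i_1},\ldots,g_{i_{n-1}},\|X\|_2^2)$ also vanish along the curve; since $\bfV_\CC(\sqrt{I^{(1)}})$ is (at most) one-dimensional and the curve lies inside it, near a generic point the curve parametrizes a smooth branch of this one-dimensional variety, so its tangent direction is spanned by the row of $\jac(g_{i_1},\ldots,g_{i_{n-1}})$ orthogonal complement; the vanishing of the augmented determinant then forces $\nabla(\|X\|_2^2) = 2x$ to be tangent to the curve, i.e. $\langle \varphi(t), \dot\varphi(t)\rangle = 0$. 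But then $\frac{\ud}{\ud t}\|\varphi(t)\|_2^2 = 2\langle\varphi(t),\dot\varphi(t)\rangle = 0$ identically, so $\|\varphi(t)\|_2^2$ is constant; since $\varphi(0)=\bfz$ this constant is $0$, forcing $\varphi\equiv\bfz$, contradicting $\varphi(t)\neq\bfz$ for $t>0$. Therefore $\mathscr{R}_{I^{(1)}} > 0$, and the infimum is in fact attained (it is positive).

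\textbf{Main obstacle.} The delicate point is the transition from the complex variety $\bfV_\CC(\Delta_{I^{(1)}})$ to a setting where the Curve Selection Lemma and the monotonicity/tangency argument apply cleanly: one must either carefully set up the real model in $\RR^{2n}$ and track how the Jacobian conditions translate, or argue directly that the augmented Jacobian determinants vanishing along a one-dimensional variety encodes exactly the condition that $\|X\|_2^2$ has zero derivative along every branch. The cross-reference to \cite[Corollary~2.8]{Milnor1968} suggests the authors intend precisely this Milnor-style curve-selection-plus-monotonicity argument, so I would follow that template, being careful that $s \ge n-1$ guarantees the family $\mathscr{D}$ is nonempty and that genericity of the chosen $n-1$ generators recovers the tangent line at a smooth point of the curve.
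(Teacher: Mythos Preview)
Your treatment of the zero-dimensional piece $I^{(0)}$ matches the paper exactly. For the one-dimensional piece, however, your curve-selection argument has a genuine gap, and the paper in fact takes a completely different and much shorter route.

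The gap is in the step ``Then $\bfz$ lies in the closure of the nonzero locus of this semi-algebraic set.'' The hypothesis $\mathscr{R}_{I^{(1)}}=0$ says only that the \emph{values} $x_1^2+\cdots+x_n^2$ (complex numbers that happen to be positive reals) accumulate at $0$ along a sequence $x^{(k)}\in\bfV_\CC(\Delta_{I^{(1)}})$. Over $\CC^n$ this does \emph{not} force the points $x^{(k)}$ themselves to approach $\bfz$: for instance $x(t)=(\sqrt{1+t},\,i)\in\CC^2$ has $x_1^2+x_2^2=t\to 0^+$ while $x(t)\to(1,i)$. So you cannot place the base of your analytic curve $\varphi$ at $\bfz$, and the monotonicity argument never starts. (Milnor's Corollary~2.8, which you invoke, is run for the genuine Euclidean norm on a \emph{real} variety, where small norm does force proximity to $\bfz$; here the relevant quantity is the polynomial $\sum x_i^2$ on a complex variety.) The same conflation reappears at the end: even if you had $\sum_i\varphi_i(t)^2\equiv 0$, this would not give $\varphi\equiv\bfz$ over $\CC$.

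The paper sidesteps all of this by proving directly that the image $\Phi\bigl(\bfV_\CC(\Delta_{I^{(1)}})\bigr)\subset\CC$ is \emph{finite}, where $\Phi(x)=x_1^2+\cdots+x_n^2$. Since $\sqrt{I^{(1)}}$ is radical and pure one-dimensional, $\bfV_\CC(\Delta_{I^{(1)}})$ is the union of the singular locus of $\bfV_\CC(I^{(1)})$ (zero-dimensional, hence finite) and the critical locus of $\Phi$ on the curve; the image of the latter is finite by the algebraic Sard theorem. A finite value set immediately gives $\mathscr{R}_{I^{(1)}}>0$, with no curve selection and no contradiction argument needed.
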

\begin{proof}
	Since $I^{(0)}$ is zero-dimensional, we have $\mathscr{R}_{I^{(0)}}>0$.
	Now we show that $\mathscr{R}_{I^{(1)}}>0$.
Consider the map 	
\[
\begin{aligned}
	\Phi:\ \bfV_\CC(I^{(1)})&\ \rightarrow\ &&\CC\\
	x&\ \mapsto \ &&x_1^2+\cdots+x_n^2.
\end{aligned}
\]
Since $\langle g_1,\ldots,g_s\rangle$ is radical and equidimensional one,
$\bfV_\CC(\Delta_{I^{(1)}})$ consists of the singular locus of
$\bfV_\CC(I^{(1)})$ and the set of critical points of $\Phi$.
Since $\dim(\bfV_\CC(I^{(1)}))=1$, its singular locus is zero-dimensional.
By the algebraic Sard's theorem \cite{BAG},
there are only finitely many critical values
of the map $\Phi$ (note that if $\Phi$ is not dominant, the conclusion is clearly true).
Hence, the set $\Phi(\bfV_\CC(\Delta_{I^{(1)}}))$ is finite and $\mathscr{R}_{I^{(1)}}>0$.
\end{proof}

\begin{theorem}\label{th::R}
Suppose that an ideal $I\subseteq\RR[X]$
and a $\mathscr{R}\in\RR_+$ satisfy$:$
\begin{inparaenum}[\upshape(i\upshape)]
\item $\dim(I)=1$$;$
\item $\mathscr{R}<\mathscr{R}_I$$;$
\item $\Gamma_\RR(f)\cap\mathsf{B}_{\mathscr{R}}
=\bfV_\RR(I)\cap\mathsf{B}_{\mathscr{R}}$,
\end{inparaenum}
then Condition \ref{con::curve} holds for $\mathscr{R}$.
\end{theorem}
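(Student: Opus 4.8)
The goal is to verify that the hypotheses (i)--(iii) force Condition \ref{con::curve} to hold at the radius $\mathscr{R}$. So I fix an arbitrary $\bfz \neq u \in \Gamma_\RR(f)$ with $\|u\|_2 < \mathscr{R}$ and must produce the neighborhood $\mathcal{O}_u$, the differentiable parametrization $\phi$, and the point $\bar t$ with the nonvanishing derivative \eqref{eq::phi}. By hypothesis (iii), near $u$ the set $\Gamma_\RR(f)$ coincides with $\bfV_\RR(I)$, so it suffices to understand the local structure of the real variety $\bfV_\RR(I)$ at the point $u$. The plan is to run the equidimensional decomposition $I = I^{(0)} \cap I^{(1)}$ introduced just before Lemma \ref{lem::cs}, observe that $u$ cannot lie on the zero-dimensional piece $\bfV_\CC(I^{(0)})$ (because $\|u\|_2 < \mathscr{R} \le \mathscr{R}_{I^{(0)}}$, so $u \notin \bfV_\CC(I^{(0)}) \setminus \{\bfz\}$, and $\bfz \ne u$), so that locally near $u$ we have $\bfV_\RR(I) = \bfV_\RR(I^{(1)}) = \bfV_\RR(\langle g_1,\ldots,g_s\rangle)$ with $\langle g_1,\ldots,g_s\rangle$ radical and pure of dimension $1$.

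Next I would use the condition $\|u\|_2 < \mathscr{R} < \mathscr{R}_{I^{(1)}}$ to conclude that $u$ avoids $\bfV_\CC(\Delta_{I^{(1)}})$, hence $u$ is a smooth point of the complex curve $\bfV_\CC(I^{(1)})$ and, moreover, that none of the Jacobian determinants $\det \jac(g_{i_1},\ldots,g_{i_{n-1}}, \|X\|_2^2)$ vanishes at $u$. The first fact (smoothness, i.e. the Jacobian of $(g_1,\dots,g_s)$ has rank $n-1$ at $u$) shows via the real implicit function theorem that $\bfV_\RR(I^{(1)})$ is, in a neighborhood $\mathcal{O}_u$ of $u$ that we may shrink to lie inside $\mathsf{B}_\mathscr{R}$, a one-dimensional real-analytic submanifold; parametrize it by a differentiable map $\phi\colon (a,b)\to \RR^n$ with $\phi((a,b)) = \bfV_\RR(I^{(1)})\cap \mathcal{O}_u = \Gamma_\RR(f)\cap \mathcal{O}_u$ and $\phi(\bar t) = u$. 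The second fact is exactly what gives \eqref{eq::phi}: if the tangent line to the curve at $u$ were tangent to the sphere $\|X\|_2 = \|u\|_2$, i.e. if $\frac{\ud(\sum \phi_i^2)}{\ud t}(\bar t) = 0$, then the gradient $\nabla(\|X\|_2^2)(u) = 2u$ would be a linear combination of the gradients $\nabla g_j(u)$; choosing $n-1$ of the $g_j$ whose gradients span the tangent-space normal at $u$, this linear dependence is equivalent to the vanishing of the corresponding determinant $\det\jac(g_{i_1},\dots,g_{i_{n-1}},\|X\|_2^2)$ at $u$, contradicting $u \notin \bfV_\CC(\Delta_{I^{(1)}})$.

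I expect the main obstacle to be the bookkeeping in this last step: one must choose the right $(n-1)$-subset of the generators at $u$ (the one realizing the rank $n-1$) and argue carefully that the vanishing of $\frac{\ud(\sum\phi_i^2)}{\ud t}(\bar t)$ is genuinely equivalent to the vanishing of that particular determinant from $\mathscr{D}$ --- not merely implied by it --- since Condition \ref{con::curve} demands a nonzero derivative for the specific parametrization $\phi$ of the curve, and one has to make sure the chain rule identity $\frac{\ud(\sum\phi_i^2)}{\ud t}(\bar t) = 2\langle u, \phi'(\bar t)\rangle$ together with $\phi'(\bar t)$ spanning the kernel of $\jac(g_{i_1},\dots,g_{i_{n-1}})(u)$ really does translate into that determinant. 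A secondary point to handle cleanly is why $\Delta_{I^{(1)}} = \{g_1,\dots,g_s\}\cup \mathscr{D}$ (rather than just $\mathscr{D}$) captures all the bad points: including the $g_j$ themselves is harmless here since $u \in \bfV_\RR(I^{(1)})$ already satisfies $g_j(u) = 0$, so the constraint $u \notin \bfV_\CC(\Delta_{I^{(1)}})$ with $\|u\|_2 < \mathscr{R}_{I^{(1)}}$ is really a statement about the determinants in $\mathscr{D}$; I would remark on this so the argument reads correctly. Everything else --- the implicit function theorem, shrinking $\mathcal{O}_u$ into $\mathsf{B}_\mathscr{R}$, the identification $\Gamma_\RR(f)\cap\mathcal{O}_u = \bfV_\RR(I)\cap\mathcal{O}_u$ from (iii) --- is routine.
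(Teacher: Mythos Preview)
Your plan is correct and follows the same route as the paper. One phrasing slip to fix: from $u\notin\bfV_\CC(\Delta_{I^{(1)}})$ together with $g_j(u)=0$ you only learn that \emph{some} determinant in $\mathscr{D}$ is nonzero at $u$, not that none vanishes; correspondingly, your contradiction closes once you observe that if $2u$ lay in $\operatorname{span}\{\nabla g_j(u)\}$ then \emph{every} determinant in $\mathscr{D}$ would vanish (all $n$ rows would lie in an $(n{-}1)$-dimensional space), forcing $u\in\bfV_\CC(\Delta_{I^{(1)}})$. The paper implements the same idea more concretely: it fixes the subset $\{i_1,\dots,i_{n-1}\}$ for which the determinant is nonzero, builds the local diffeomorphism $G=(g_{i_1},\dots,g_{i_{n-1}},\,\sum u_iX_i-\|u\|_2^2)$ via the inverse function theorem, sets $\phi(t)=G^{-1}(0,\dots,0,t)$, and reads off $\frac{\ud}{\ud t}\sum\phi_i^2(0)\neq 0$ from a direct matrix identity using the nonsingular matrix $M$ of \eqref{eq::J}. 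It then separately proves $\bfV_\RR(I^{(1)})\cap\mathcal{O}_u=\bfV_\RR(\langle g_{i_1},\dots,g_{i_{n-1}}\rangle)\cap\mathcal{O}_u$ by comparing irreducible components of the two complex varieties through $u$---exactly the ``bookkeeping'' you anticipate, made precise so that the parametrization $\phi$ really traces $\Gamma_\RR(f)\cap\mathcal{O}_u$ and not a larger curve.
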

\begin{proof}
Fix a $\bfz\neq u\in \Gamma_\RR(f)$ with $\Vert u\Vert_2<\mathscr{R}$.
Since $\mathscr{R}<\mathscr{R}_I$ and $\Gamma_\RR(f)\cap\mathsf{B}_{\mathscr{R}}
=\bfV_\RR(I)\cap\mathsf{B}_{\mathscr{R}}$, by Corollary \ref{cor::notiso} and
the definition of $\mathscr{R}_I$, we
have $\bfV_\RR(I)\cap\mathsf{B}_{\mathscr{R}}=\bfV_\RR(I^{(1)})\cap\mathsf{B}_{\mathscr{R}}$
and hence $u\in\bfV_\RR(I^{(1)})$.
%
Because $u\not\in \bfV_\RR(\Delta_{I^{(1)}})$,
there is a Jacobian matrix
of the form
\begin{equation}\label{eq::J}
M=\left[\begin{array}{cccc}
\frac{\partial g_{i_1}}{\partial x_1}(u) & \cdots & \frac{\partial
g_{i_1}}{\partial x_n}(u)\\
\vdots & \vdots & \vdots\\
\frac{\partial g_{i_{n-1}}}{\partial x_1}(u) & \cdots &
\frac{\partial g_{i_{n-1}}}{\partial x_n}(u)\\ u_1 & \cdots & u_n
\end{array}\right]
\end{equation}
with full rank.

Let $\tilde{g}(X)=\sum_{i=1}^n u_iX_i-\sum_{i=1}^n
u_i^2$ and $\tilde{I}^{(1)}=\langle g_{i_1},\ldots,g_{i_{n-1}}\rangle$.
Define a function
$Y=G(X):=(g_{i_1}(X),\ldots,g_{i_{n-1}}(X),\tilde{g}(X))$,
then we have $G(u)=\bfz$ and the Jacobian of $G(X)$ at $u$ is nonsingular.
Hence, by the inverse function theorem, $G(X)$ is an invertible function
in a neighborhood $\mathcal{O}_u$ of $u$. Without loss of generality,
we can assume that $\mathcal{O}_u\subseteq\mathsf{B}_\mathscr{R}$.
Thus, an invertible funtion
$X=G^{-1}(Y)=(G^{-1}_1(Y),\ldots,G^{-1}_n(Y))$ exists in some neighborhood
$\mathcal{O}_\bfz$ of $\bfz$. Moreover, $X=G^{-1}(Y)$ is differentiable in
$\mathcal{O}_\bfz$.
%
Define $\phi(t)=(\phi_i(t))$ with
$\phi_i(t)=G^{-1}_i(0,\ldots,0,t)$. Then, there is an interval $(a,b)$ such that
$\phi((a,b))=\bfV_\RR(\tilde{I}^{(1)})\cap\mathcal{O}_u$. Moreover, we have
$0\in(a,b)$ and $\phi(0)=u$.
Then we have
\[
M\cdot
\left[\begin{array}{c}
\frac{\partial G^{-1}_1}{\partial Y_n}(\bfz)\\
\vdots\\
\frac{\partial G^{-1}_{n-1}}{\partial Y_n}(\bfz)\\
\frac{\partial G^{-1}_n}{\partial Y_n}(\bfz)
\end{array}\right]
=
\left[\begin{array}{c}
\frac{\partial Y_1}{\partial Y_n}(\bfz)\\
\vdots\\
\frac{\partial Y_{n-1}}{\partial Y_n}(\bfz)\\
\frac{\partial\sum_{i=1}^n (G^{-1}_i)^2}{2\partial Y_n}(\bfz)
\end{array}\right]=
\left[\begin{array}{c}
0\\
\vdots\\
0\\
\frac{\ud\sum_{i=1}^n\phi^2_i}{2\ud t}(0)
\end{array}\right]
\]
By the implicit function theorem, there is an $i_0$ such that
$\frac{\partial G^{-1}_{i_0}}{\partial Y_n}(\bfz)\neq 0$.
Since the
matrix $M$ in (\ref{eq::J}) is nonsingular,
it implies $\frac{\ud\sum_{i=1}^n\phi^2_i}{\ud t}(0)\neq 0$.
Recalling Condition \ref{con::curve}, since
$\Gamma_\RR(f)\cap\mathcal{O}_u
=\bfV_\RR(I^{(1)})\cap\mathcal{O}_u$,
it remains to prove that
$\bfV_\RR(I^{(1)})\cap\mathcal{O}_u=\bfV_\RR(\tilde{I}^{(1)})\cap\mathcal{O}_u$.

It suffices to prove that
$\bfV_\CC(I^{(1)})\cap\mathcal{O}=\bfV_\CC(\tilde{I}^{(1)})\cap\mathcal{O}$ for
some Zariski open set $\mathcal{O}\subseteq\CC^n$ containing $u$.
Let $\bfV_\CC(I^{(1)})=V_1\cup\cdots\cup V_s$ and
$\bfV_\CC(\td{I}^{(1)})=\wt{V}_1\cup\cdots\cup\wt{V}_t$ be the irreducible
decompositions of $\bfV_\CC(I^{(1)})$ and $\bfV_\CC(\td{I}^{(1)})$, respectively.
Since the first $n-1$ rows of $M$ is linear independent, there is a unique
irreducible component, say $\widetilde{V}_1$, of $\bfV_\CC(\tilde{I}^{(1)})$
containing $u$ and $\widetilde{V}_1$ is smooth of dimension one at $u$.
Let $V_1$ be an irreducible component of $\bfV_\CC(I^{(1)})$ containing $u$.
Since $I^{(1)}\supseteq\tilde{I}^{(1)}$, we have
$\bfV_\CC(I^{(1)})\subseteq\bfV_\CC(\tilde{I}^{(1)})$ and hence
$V_1\subseteq\widetilde{V}_1$. Because $\dim(V_1)=1$, it follows that
$V_1=\widetilde{V}_1$ which also implies that $V_1$ is the unique irreducible
component of $\bfV_\CC(I^{(1)})$ containing $u$.
Let $\mathcal{O}=\CC^n\backslash\left(\bigcup_{i=2}^s V_i\cup\bigcup_{i=2}^t\wt{V}_i\right)$,
then we have
$\bfV_\CC(I^{(1)})\cap\mathcal{O}=\bfV_\CC(\tilde{I}^{(1)})\cap\mathcal{O}$
which ends the proof.
\end{proof}

Combining Theorems \ref{th::main2} and \ref{th::R}, we obtain
\begin{theorem}\label{th::fr}
Suppose that an ideal $I\subseteq\RR[X]$
and a $\mathscr{R}\in\RR_+$ satisfy$:$
\begin{inparaenum}[\upshape(i\upshape)]
\item $\dim(I)=1$$;$
\item $\mathscr{R}<\mathscr{R}_I$$;$
\item $\Gamma_\RR(f)\cap\mathsf{B}_{\mathscr{R}}
=\bfV_\RR(I)\cap\mathsf{B}_{\mathscr{R}}$,
\end{inparaenum}
	then any isolation radius $R\in\RR_+$ of $\bfz$
	with $R<\mathscr{R}$ is a faithful radius of $\bfz$.
\end{theorem}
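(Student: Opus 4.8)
The plan is to obtain the statement by directly chaining Theorems \ref{th::R} and \ref{th::main2}, since the three hypotheses listed here are word-for-word the hypotheses of Theorem \ref{th::R}.

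First I would apply Theorem \ref{th::R} to the given pair $(I, \mathscr{R})$: because $\dim(I) = 1$, $\mathscr{R} < \mathscr{R}_I$, and $\Gamma_\RR(f) \cap \mathsf{B}_{\mathscr{R}} = \bfV_\RR(I) \cap \mathsf{B}_{\mathscr{R}}$, Condition \ref{con::curve} holds for $\mathscr{R}$. (Implicitly this uses Lemma \ref{lem::cs}, which guarantees $\mathscr{R}_I > 0$, so that hypothesis (ii) can be satisfied by some $\mathscr{R} \in \RR_+$.) Now fix an arbitrary isolation radius $R \in \RR_+$ of $\bfz$ with $R < \mathscr{R}$; in particular $0 < R < \mathscr{R}$. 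Theorem \ref{th::main2} then applies verbatim: its first conclusion yields that $\Gamma_\RR(f) \cap \mathsf{B}_R$ is connected, which is clause (i) of Definition \ref{def::faithr}; and since $R$ is by assumption an isolation radius, the ``moreover'' part of Theorem \ref{th::main2} yields $\Gamma_\RR(f) \cap \bfV_\RR(f) \cap \mathsf{B}_R = \{\bfz\}$, which is clause (ii) of Definition \ref{def::faithr}. Hence $R$ is an isolation radius satisfying both defining conditions, i.e. a faithful radius of $\bfz$.

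I do not expect any genuine obstacle in this argument: all of the real work has already been carried out in the cited results — the transversality-plus-Sard argument behind $\mathscr{R}_I > 0$ (Lemma \ref{lem::cs}), the inverse/implicit function theorem construction of a differentiable local parametrization $\phi$ of the tangency curve through each $u \neq \bfz$ together with the derivative computation showing $\frac{\ud \left( \sum_{i=1}^n \phi_i^2 \right)}{\ud t} \neq 0$ (Theorem \ref{th::R}), and the connectedness argument combined with the mean value theorem applied to $f \circ \phi$ (Theorem \ref{th::main2}). The only things left to verify are the purely bookkeeping facts that the hypotheses line up, that $R < \mathscr{R}$ is exactly the smallness needed to invoke Theorem \ref{th::main2}, and that the two clauses of Definition \ref{def::faithr} are precisely what Theorem \ref{th::main2} delivers; it may also be worth remarking explicitly that no requirement on $R$ beyond ``isolation radius and $R < \mathscr{R}$'' is used, so all such $R$ are covered simultaneously.
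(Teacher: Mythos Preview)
Your proposal is correct and matches the paper's own approach exactly: the paper introduces Theorem \ref{th::fr} with the line ``Combining Theorems \ref{th::main2} and \ref{th::R}, we obtain'' and gives no further argument. Your chain---apply Theorem \ref{th::R} to verify Condition \ref{con::curve} for $\mathscr{R}$, then feed any isolation radius $R<\mathscr{R}$ into Theorem \ref{th::main2}---is precisely the intended derivation.
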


Let $\boldsymbol{\gamma}=\{\gamma_{i,j}\mid i,j=1,\ldots,n\}$ where
$\gamma_{i,j}:=\frac{\partial f}{\partial X_i}X_j-\frac{\partial f}{\partial X_j}X_i$,
then $\Gamma_\RR(f)=\bfV_\RR(\langle\boldsymbol{\gamma}\rangle)$.
If $\dim(\langle\boldsymbol{\gamma}\rangle)=1$,
let $\mathscr{R}_{\langle\boldsymbol{\gamma}\rangle}$ be defined as in (\ref{eq::g}).

\begin{cor}
If $\dim(\langle\boldsymbol{\gamma}\rangle)=1$, then any isolation radius
$R$ with $R<\mathscr{R}_{\langle\boldsymbol{\gamma}\rangle}$ is a faithful
radius of $\bfz$.
\end{cor}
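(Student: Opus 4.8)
The plan is to derive this corollary as a direct specialization of Theorem~\ref{th::fr}, by checking that the three hypotheses of that theorem are met with the choice $I = \langle\boldsymbol{\gamma}\rangle$ and with $\mathscr{R} = \mathscr{R}_{\langle\boldsymbol{\gamma}\rangle}$ replaced by any value slightly smaller than it (or, more precisely, by noting that an isolation radius $R < \mathscr{R}_{\langle\boldsymbol{\gamma}\rangle}$ already plays the role of both $R$ and a valid $\mathscr{R}$). First I would recall that by definition $\gamma_{i,j} = \frac{\partial f}{\partial X_i}X_j - \frac{\partial f}{\partial X_j}X_i$, and since $\bfz \in \crit_\RR(f)$ we have, as noted earlier in Section~\ref{sec::faithfulradius}, that $\Gamma_\RR(f) = \bfV_\RR(\langle\boldsymbol{\gamma}\rangle)$ exactly (not merely locally). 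This immediately gives hypothesis~(iii) of Theorem~\ref{th::fr} for \emph{any} radius $\mathscr{R}$, since $\Gamma_\RR(f)\cap\mathsf{B}_{\mathscr{R}} = \bfV_\RR(\langle\boldsymbol{\gamma}\rangle)\cap\mathsf{B}_{\mathscr{R}}$ with no approximation needed.

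Next, hypothesis~(i) is the standing assumption $\dim(\langle\boldsymbol{\gamma}\rangle)=1$ of the corollary, so there is nothing to prove there; and it is exactly this hypothesis that makes $\mathscr{R}_{\langle\boldsymbol{\gamma}\rangle}$ well-defined and, by Lemma~\ref{lem::cs}, strictly positive. For hypothesis~(ii), given an isolation radius $R$ with $R < \mathscr{R}_{\langle\boldsymbol{\gamma}\rangle}$, I would pick any $\mathscr{R}$ with $R < \mathscr{R} < \mathscr{R}_{\langle\boldsymbol{\gamma}\rangle}$ (possible since $\mathscr{R}_{\langle\boldsymbol{\gamma}\rangle} > 0$ and $R < \mathscr{R}_{\langle\boldsymbol{\gamma}\rangle}$). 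Then $I = \langle\boldsymbol{\gamma}\rangle$ and this $\mathscr{R}$ satisfy all three hypotheses of Theorem~\ref{th::fr}, and $R$ is an isolation radius with $R < \mathscr{R}$, so Theorem~\ref{th::fr} yields that $R$ is a faithful radius of $\bfz$.

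I do not anticipate a genuine obstacle here: the corollary is essentially a bookkeeping consequence of the already-established machinery, and the only subtle point is making sure that the strict inequality $R < \mathscr{R}_{\langle\boldsymbol{\gamma}\rangle}$ leaves room to insert an intermediate $\mathscr{R}$, which is immediate from positivity of $\mathscr{R}_{\langle\boldsymbol{\gamma}\rangle}$ (Lemma~\ref{lem::cs}). One could even streamline the argument by observing that the equidimensional decomposition $\langle\boldsymbol{\gamma}\rangle = I^{(0)} \cap I^{(1)}$ used to define $\mathscr{R}_{\langle\boldsymbol{\gamma}\rangle}$ is precisely the decomposition invoked inside the proof of Theorem~\ref{th::R}, so no new decomposition data is introduced. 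The proof is therefore just: apply Theorem~\ref{th::fr} with $I = \langle\boldsymbol{\gamma}\rangle$, using $\Gamma_\RR(f) = \bfV_\RR(\langle\boldsymbol{\gamma}\rangle)$ for hypothesis~(iii), the corollary's hypothesis for~(i), and Lemma~\ref{lem::cs} together with the choice of an intermediate $\mathscr{R}$ for~(ii).
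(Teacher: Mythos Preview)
Your proposal is correct and matches the paper's intended approach: the corollary is stated in the paper without proof, as an immediate consequence of Theorem~\ref{th::fr} applied with $I=\langle\boldsymbol{\gamma}\rangle$, using the global identity $\Gamma_\RR(f)=\bfV_\RR(\langle\boldsymbol{\gamma}\rangle)$ for hypothesis~(iii). Your insertion of an intermediate $\mathscr{R}$ with $R<\mathscr{R}<\mathscr{R}_{\langle\boldsymbol{\gamma}\rangle}$ is exactly the right way to make the invocation of Theorem~\ref{th::fr} precise.
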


Note that the ideal $\langle\boldsymbol{\gamma}\rangle$ may not be one-dimensional. Let
$\mathcal{G}:={\bf I}\left({\overline{\Gamma_\CC(f)\backslash\crit_\CC(f)}}^{\mathcal{Z}}\right)$,
i.e., the vanishing ideal of ${\overline{\Gamma_\CC(f)\backslash\crit_\CC(f)}}^{\mathcal{Z}}$
in $\RR[X]$.
According to Corollary \ref{cor::dim1}, we have $\dim(\mathcal{G})=1$
up to a generic linear change of coordinates. Moreover,
\begin{prop}\label{prop::r}
$\Gamma_\RR(f)\cap\mathsf{B}_R=\bfV_\RR(\mathcal{G})\cap\mathsf{B}_R$ holds
for any isolation radius $R$.
\end{prop}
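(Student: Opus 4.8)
The plan is to prove the two inclusions $\Gamma_\RR(f)\cap\mathsf{B}_R\subseteq\bfV_\RR(\mathcal{G})\cap\mathsf{B}_R$ and $\bfV_\RR(\mathcal{G})\cap\mathsf{B}_R\subseteq\Gamma_\RR(f)\cap\mathsf{B}_R$ separately. For the first inclusion, I would argue that, since $R$ is an isolation radius, every point $x\in\Gamma_\RR(f)\cap\mathsf{B}_R$ other than $\bfz$ lies in $\Gamma_\RR(f)\setminus\crit_\RR(f)$, hence in $\overline{\Gamma_\CC(f)\setminus\crit_\CC(f)}^{\mathcal{Z}}$ and therefore in $\bfV_\RR(\mathcal{G})$; the point $\bfz$ itself lies in $\bfV_\RR(\mathcal{G})$ because, by Corollary~\ref{cor::notiso}, $\bfz$ is not isolated in $\Gamma_\RR(f)\setminus\crit_\RR(f)$, so $\bfz$ is in the closure of that set and hence in its Zariski closure, which is exactly $\bfV_\CC(\mathcal{G})$.

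For the reverse inclusion, the key point is that $\mathcal{G}=\mathbf{I}\!\left(\overline{\Gamma_\CC(f)\setminus\crit_\CC(f)}^{\mathcal{Z}}\right)$ is the vanishing ideal of a variety contained in $\Gamma_\CC(f)$. Indeed, $\Gamma_\CC(f)$ is Zariski closed in $\CC^n$ (it is cut out by the polynomials $\gamma_{i,j}$), so $\overline{\Gamma_\CC(f)\setminus\crit_\CC(f)}^{\mathcal{Z}}\subseteq\Gamma_\CC(f)$, and taking real points gives $\bfV_\RR(\mathcal{G})=\overline{\Gamma_\CC(f)\setminus\crit_\CC(f)}^{\mathcal{Z}}\cap\RR^n\subseteq\Gamma_\CC(f)\cap\RR^n=\Gamma_\RR(f)$. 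Intersecting with $\mathsf{B}_R$ yields the inclusion. Combining the two inclusions gives the claimed equality, with no use of the faithfulness of $R$ — only the isolation-radius property is needed, matching the statement.

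The only delicate point is making sure $\bfz\in\bfV_\RR(\mathcal{G})$, i.e.\ that removing the critical locus does not accidentally throw away $\bfz$ at the level of Zariski closures; this is precisely what Corollary~\ref{cor::notiso} supplies, since $\dim(\Gamma_\RR(f)\setminus\crit_\RR(f))\ge 1$ together with the explicit sequence $u_r\to\bfz$ constructed there shows $\bfz$ is a limit point of $\Gamma_\RR(f)\setminus\crit_\RR(f)$, hence lies in its Euclidean closure and a fortiori in its Zariski closure. Everything else is a routine unwinding of definitions, so I do not expect any real obstacle; if one wanted to be careful one could also note that a generic linear change of coordinates (Corollary~\ref{cor::dim1}) is irrelevant here because the statement of Proposition~\ref{prop::r} only concerns the set-theoretic equality of real varieties, which is coordinate-dependent in form but valid as stated for the given $f$.
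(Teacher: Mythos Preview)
Your proof is correct and follows essentially the same approach as the paper: both arguments establish the two inclusions separately, both use Corollary~\ref{cor::notiso} to place $\bfz$ in $\bfV_\RR(\mathcal{G})$, and both observe that the reverse inclusion is immediate because $\Gamma_\CC(f)$ is Zariski closed. The only cosmetic difference is that the paper phrases the forward inclusion via the irreducible decomposition of $\Gamma_\CC(f)$ (splitting off the components contained in $\crit_\CC(f)$), whereas you argue directly that any nonzero point of $\Gamma_\RR(f)\cap\mathsf{B}_R$ is non-critical by the isolation-radius hypothesis; your formulation is slightly more economical but the content is the same.
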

\begin{proof}
Let $V_1\cup\cdots\cup V_s\cup V_{s+1}\cup\cdots\cup V_{t}$
be the decomposition of $\Gamma_\CC(f)$ as a union of irreducible components.
Assume that $V_i\not\subseteq\crit_\CC(f)$ for $1\le i\le s$ and
$V_j\subseteq\crit_\CC(f)$ for $s+1\le j\le t$.
Let $V^{(1)}=V_1\cup\cdots\cup V_s$ and $V^{(2)}=V_{s+1}\cup\cdots\cup V_{t}$,
then $\bfV_\CC(\mathcal{G})=V^{(1)}$ and $V^{(2)}\cap\mathsf{B}_R\subseteq\{\bfz\}$.
We have $\Gamma_\RR(f)\cap\mathsf{B}_R=\Gamma_\CC(f)\cap\mathsf{B}_R
=(V^{(1)}\cap\mathsf{B}_R)\cup (V^{(2)}\cap\mathsf{B}_R)\subseteq
(\bfV_\RR(\mathcal{G})\cap\mathsf{B}_R)\cup\{\bfz\}$.
Since $\Gamma_\RR(f)\backslash\crit_\RR(f)\subset\bfV_\RR(\mathcal{G})$,
by Corollary \ref{cor::notiso},
$\bfz\in\bfV_\RR(\mathcal{G})$ and hence
$\Gamma_\RR(f)\cap\mathsf{B}_R\subseteq\bfV_\RR(\mathcal{G})\cap\mathsf{B}_R$.
It is clear that
$\Gamma_\RR(f)\cap\mathsf{B}_R\supseteq\bfV_\RR(\mathcal{G})\cap\mathsf{B}_R$
and thus the conclusion follows.
\end{proof}
\begin{cor}\label{cor::fr}
	Suppose that $\dim(\mathcal{G})=1$ and $R$ is an isolation radius of
	$f$. Then, any $r\in\RR_+$ with $r<\min\{R,\mathscr{R}_\mathcal{G}\}$
	is a faithful radius of $\bfz$.
\end{cor}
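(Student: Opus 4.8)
The plan is to verify the three hypotheses of Theorem \ref{th::fr} with the specific choice $I = \mathcal{G}$ and any $\mathscr{R} \in \RR_+$ with $\mathscr{R} < \min\{R, \mathscr{R}_\mathcal{G}\}$, and then simply invoke that theorem. Since by assumption $\dim(\mathcal{G}) = 1$, hypothesis (i) holds immediately; this is also precisely the normalization that Corollary \ref{cor::dim1} guarantees we can arrange by a generic linear change of coordinates, so no work is needed here beyond citing it. For hypothesis (ii), note that $\mathscr{R}_{\mathcal{G}}$ is exactly the quantity defined in \eqref{eq::g} applied to the ideal $I = \mathcal{G}$ (its equidimensional decomposition $\mathcal{G} = \mathcal{G}^{(0)} \cap \mathcal{G}^{(1)}$, the radical generators of $\mathcal{G}^{(1)}$, the Jacobian determinants $\mathscr{D}$, etc.), and Lemma \ref{lem::cs} tells us $\mathscr{R}_{\mathcal{G}} > 0$ since $\dim(\mathcal{G}) = 1$; hence any $r < \min\{R, \mathscr{R}_\mathcal{G}\}$ satisfies $r < \mathscr{R}_\mathcal{G}$, giving (ii). For hypothesis (iii), that $\Gamma_\RR(f) \cap \mathsf{B}_{\mathscr{R}} = \bfV_\RR(\mathcal{G}) \cap \mathsf{B}_{\mathscr{R}}$, I would apply Proposition \ref{prop::r}: it asserts exactly this equality for any isolation radius $R$, and since $\mathscr{R} < R$ and $R$ is an isolation radius of $f$, intersecting both sides of the equality $\Gamma_\RR(f) \cap \mathsf{B}_R = \bfV_\RR(\mathcal{G}) \cap \mathsf{B}_R$ with $\mathsf{B}_{\mathscr{R}} \subseteq \mathsf{B}_R$ yields (iii). (Strictly, Proposition \ref{prop::r} is stated for the isolation radius $R$ itself, but its conclusion restricts to any smaller ball, so $\mathscr{R}$ works as well.)

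With all three hypotheses of Theorem \ref{th::fr} verified for the pair $(\mathcal{G}, \mathscr{R})$, that theorem concludes that any isolation radius $R' \in \RR_+$ of $\bfz$ with $R' < \mathscr{R}$ is a faithful radius. Here the given $R$ is an isolation radius, and any $r < \min\{R, \mathscr{R}_\mathcal{G}\}$ is itself an isolation radius of $\bfz$ (any positive number smaller than an isolation radius is again an isolation radius, directly from Definition \ref{def::is}) satisfying $r < \mathscr{R}$ whenever we pick $\mathscr{R}$ strictly between $r$ and $\min\{R, \mathscr{R}_\mathcal{G}\}$. So $r$ is a faithful radius, which is the claim.

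The only genuinely delicate point is the interplay between the generic linear change of coordinates in Corollary \ref{cor::dim1} and the hypothesis $\dim(\mathcal{G}) = 1$ stated in the corollary: one must be careful that "$\dim(\mathcal{G}) = 1$" is taken after whatever coordinate change was needed, and that all the objects $\Gamma_\RR(f)$, $\crit_\RR(f)$, $\bfB_R$, and the Euclidean norm $\Vert X \Vert_2^2$ entering the definition of $\mathscr{R}_\mathcal{G}$ are understood in the same (post-change) coordinates — this is consistent because Lemma \ref{lem::trans} shows the tangency variety transforms compatibly and an orthogonal change preserves the norm, but for a general invertible $A$ the relevant statement is Theorem \ref{th::dim}/Corollary \ref{cor::dim1} interpreted for $f^A$. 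Everything else is bookkeeping: the corollary is essentially just the specialization of Theorem \ref{th::fr} to the canonical ideal $\mathcal{G} = \mathbf{I}\bigl(\overline{\Gamma_\CC(f)\setminus\crit_\CC(f)}^{\mathcal{Z}}\bigr)$, for which hypothesis (iii) is automatic by Proposition \ref{prop::r} rather than something that must be checked by hand.
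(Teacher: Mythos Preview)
Your proposal is correct and is exactly the intended argument: the paper states Corollary~\ref{cor::fr} without explicit proof, as an immediate consequence of Theorem~\ref{th::fr} applied with $I=\mathcal{G}$, where hypothesis~(iii) is supplied by Proposition~\ref{prop::r} and hypothesis~(i) is the stated assumption. Your handling of the strict inequalities---choosing an auxiliary $\mathscr{R}$ with $r<\mathscr{R}<\min\{R,\mathscr{R}_{\mathcal{G}}\}$ and noting that $r<R$ makes $r$ itself an isolation radius---is the right bookkeeping, and the final paragraph on coordinate changes is contextual commentary rather than part of the proof proper (the corollary simply hypothesizes $\dim(\mathcal{G})=1$).
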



Provided that an isolation radius of $\bfz$ is known,
we now present an algorithm to compute a faithful radius of $\bfz$.
\begin{framed}
\begin{algorithm}\label{al::fr}{\sf
\noindent FaithfulRadius($f,R_{\text{\sf iso}}$)\\
Input: A polynomial $f\in\RR[X]$ with $\bfz$ as an isolated real
critical point and an isolation radius $R_{\text{\sf iso}}$ of $\bfz$. \\
Output: $R\in\RR_+$ such that any $0<r<R$ is a faithful radius of $\bfz$.
\begin{enumerate}[\upshape 1.]
	\item If
		$\dim(\langle\boldsymbol{\gamma}\rangle)=1$,
		then let $I=\langle\boldsymbol{\gamma}\rangle$;
		otherwise, make a linear change
		of coordinates of $f$ such that $\dim(\mathcal{G})=1$ and let
		$I=\mathcal{G}$;
	\item Compute the equidimensional decomposition $I=I^{(0)}\cap I^{(1)}$
		and the set $\Delta_{I^{(1)}}$ as defined in (\ref{eq::g});
	\item Compute elimination ideals
		$I_{{n}}^{(0)}:=(I^{(0)}+\langle\Vert X\Vert_2^2-X_{n+1}\rangle)\cap\RR[X_{n+1}]$
		and $I_{n}^{(1)}:=
		(\langle\Delta_{I^{(1)}}\rangle+\langle\Vert X\Vert_2^2-X_{n+1}\rangle)\cap\RR[X_{n+1}]$;
	\item Compute the isolation intervals $\{[a_i,b_i]\mid i=1,\ldots,t\}$ of
		$\bfV_\RR(I_{n}^{(0)}\cdot I_{n}^{(1)})$;
	\item Let $\mathscr{R}=\min\{\sqrt{a_i}\mid
	a_i>0, i=1,\ldots,t\}$ and return $R=\min\{\mathscr{R},R_{\text{\sf iso}}\}$.
\end{enumerate}}
\end{algorithm}
\end{framed}

\begin{theorem}
Algorithm \ref{al::fr} runs successfully and is correct.
\end{theorem}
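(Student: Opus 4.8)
The plan is to verify, line by line, that each of the five steps of Algorithm~\ref{al::fr} can be carried out by the black-box procedures listed in Section~\ref{sec::pre}, and that the returned value $R$ satisfies the output specification, i.e.\ every $0<r<R$ is a faithful radius of $\bfz$. Correctness will follow by assembling Corollary~\ref{cor::fr} (and the parallel statement for $I=\langle\bgamma\rangle$) once we check that the quantity $\mathscr{R}$ computed in Steps~3--5 really equals (or lower-bounds) $\mathscr{R}_I$ from~\eqref{eq::g}.

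First I would treat termination. In Step~1 we test whether $\dim(\langle\bgamma\rangle)=1$ (Hilbert-dimension computation, a black box); if not, Corollary~\ref{cor::dim1} guarantees that a generic linear change of coordinates makes $\dim(\mathcal{G})=1$, and $\mathcal{G}={\bf I}(\overline{\Gamma_\CC(f)\backslash\crit_\CC(f)}^{\mathcal{Z}})$ is computable by the fourth black box applied to the ideals defining $\Gamma_\CC(f)$ and $\crit_\CC(f)$. By Corollary~\ref{cor::notiso} the dimension is always $\ge 1$, so after the change of coordinates it is exactly $1$; in either branch $\dim(I)=1$. Step~2 uses the equidimensional-decomposition black box (with $I^{(0)},I^{(1)}$ radical, as noted in the text after Theorem~\ref{th::R}), then forms $\Delta_{I^{(1)}}=\{g_1,\dots,g_s\}\cup\mathscr{D}$ by computing finitely many Jacobian determinants --- purely polynomial arithmetic. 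Step~3 forms two elimination ideals in $\RR[X_{n+1}]$ via a Gr\"obner basis with an elimination order, hence each is generated by a single univariate polynomial. Step~4 invokes the RUR/real-root-isolation black box on the zero-dimensional system $I^{(0)}_n\cdot I^{(1)}_n$; this is legitimate because $I^{(0)}_n$ is zero-dimensional (as $I^{(0)}$ is) and $I^{(1)}_n$ is zero-dimensional by Lemma~\ref{lem::cs}, which tells us $\Phi(\bfV_\CC(\Delta_{I^{(1)}}))$ is finite, so each univariate generator is nonzero. Step~5 is a finite minimum over the isolating intervals. Thus the algorithm halts.

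Next I would argue correctness. The key identity is that the real roots of $I^{(0)}_n$ are exactly the values $x_1^2+\cdots+x_n^2$ for $x\in\bfV_\RR(I^{(0)})$ (and likewise with $\CC$), so the smallest positive such root is $\mathscr{R}_{I^{(0)}}^2$; similarly the smallest positive real root of $I^{(1)}_n$ bounds $\mathscr{R}_{I^{(1)}}^2$ from below (it could be smaller if the extra complex critical values of $\Phi$ on $\bfV_\CC(\Delta_{I^{(1)}})$ are real and positive, but a \emph{lower} bound is all we need). Hence $\mathscr{R}:=\min\{\sqrt{a_i}\mid a_i>0\}$ computed in Step~5 satisfies $\mathscr{R}\le\mathscr{R}_I$; by shrinking to any $r<\mathscr{R}$ we also have $r<\mathscr{R}_I$, so hypothesis~(ii) of Theorem~\ref{th::fr} holds. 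Hypothesis~(i), $\dim(I)=1$, holds by Step~1. Hypothesis~(iii), $\Gamma_\RR(f)\cap\mathsf{B}_{\mathscr{R}}=\bfV_\RR(I)\cap\mathsf{B}_{\mathscr{R}}$, holds trivially when $I=\langle\bgamma\rangle$ (then $\Gamma_\RR(f)=\bfV_\RR(I)$ globally), and holds when $I=\mathcal{G}$ for \emph{every} isolation radius by Proposition~\ref{prop::r}; in both cases it persists for the smaller radius $r$. Therefore Theorem~\ref{th::fr} (equivalently Corollary~\ref{cor::fr}) applies and every isolation radius $r<\mathscr{R}$ is faithful. Since the returned value is $R=\min\{\mathscr{R},R_{\text{\sf iso}}\}$ and $R_{\text{\sf iso}}$ is an isolation radius, every $0<r<R$ is simultaneously an isolation radius and strictly below $\mathscr{R}$, hence a faithful radius of $\bfz$, as required.

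The main obstacle I anticipate is the bookkeeping in Step~1 when $\dim(\langle\bgamma\rangle)>1$: one must be sure that after the generic linear change of coordinates, the object actually passed to Steps~2--5 is $\mathcal{G}$ for the \emph{new} $f$, that $\Gamma_\RR(f)$ in hypothesis~(iii) refers to the transformed polynomial, and that the final faithful radius is reported in the new coordinates (the Euclidean ball is not invariant under $A$, but since $P=(AA^T)^{-1}$ is used precisely so that $\Gamma_\RR(f^A)=\Gamma_\RR(f^A,\mathrm{Id})$ via Lemma~\ref{lem::trans}, the ball $\mathsf{B}_r$ in the new coordinates is the correct one). A secondary point needing care is the distinction between $\inf$ and $\min$ in the definition of $\mathscr{R}_{I^{(1)}}$: we must confirm the infimum is attained, which again follows from finiteness of $\Phi(\bfV_\CC(\Delta_{I^{(1)}}))$ established in Lemma~\ref{lem::cs}, so it is a genuine minimum and the real-root isolation in Step~4 sees it. Everything else is a routine check that the named black boxes accept the inputs we feed them.
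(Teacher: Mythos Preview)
Your proposal follows exactly the paper's route --- Corollary~\ref{cor::dim1} and Lemma~\ref{lem::cs} for termination, Theorem~\ref{th::fr} and Corollary~\ref{cor::fr} for correctness --- and the step-by-step checks you sketch are essentially right. There is, however, one point the paper's short proof singles out that your argument omits, and it is precisely what makes Step~5 work: you need $0\in\bfV_\RR(I_n^{(0)}\cdot I_n^{(1)})$.

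Your claim that $\mathscr{R}=\min\{\sqrt{a_i}\mid a_i>0\}\le\mathscr{R}_I$ does not follow merely from knowing that $\mathscr{R}_I^2$ is (an upper bound on) the smallest positive real root of $I_n^{(0)}\cdot I_n^{(1)}$. If $0$ were not itself a root, the isolating interval $[a_k,b_k]$ containing $\mathscr{R}_I^2$ could straddle the origin, i.e.\ have $a_k\le 0$; that interval would then be excluded from the set $\{a_i>0\}$, and by disjointness of isolating intervals the minimum in Step~5 would be attained at some $a_j>b_k\ge\mathscr{R}_I^2$, giving $\mathscr{R}>\mathscr{R}_I$ (or the set $\{a_i>0\}$ could even be empty, leaving Step~5 undefined). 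The paper closes this gap by observing, via Corollary~\ref{cor::notiso} and the proof of Proposition~\ref{prop::r}, that $\bfz\in\bfV_\RR(I^{(1)})$; since the gradient of $\|X\|_2^2$ vanishes at $\bfz$, every determinant in $\mathscr{D}$ vanishes there too, so $\bfz\in\bfV_\CC(\Delta_{I^{(1)}})$ and hence $0$ is a root of $I_n^{(1)}$. Once $0$ has its own isolating interval, disjointness forces the interval around $\mathscr{R}_I^2$ to have strictly positive left endpoint, and the desired inequality (in fact strict, $\mathscr{R}<\mathscr{R}_I$) follows. Add this observation and your argument is complete.
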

\begin{proof}
	According to the proof of Proposition \ref{prop::r}, we have
	$\bfz\in \bfV_\RR(I^{(1)})$
	and hence $0\in\bfV_\RR(I_{n}^{(0)}\cdot I_{n}^{(1)})$ by the definition of
	$\Delta_{I^{(1)}}$.
	Then, we have $\mathscr{R}<\mathscr{R}_I$ in Step 5 since $[a_i,b_i]$'s are isolation intervals of
	$\bfV_\RR(I_{n}^{(0)}\cdot I_{n}^{(1)})$.
	Then, by Corollary \ref{cor::dim1} and the proof of Lemma \ref{lem::cs},
	the algorithm runs successfully. Its correctness can be seen by
	combining Theorem \ref{th::fr} and Corollary \ref{cor::fr}.
\end{proof}

\begin{example}\label{ex::ex1}
Consider the polynomial $f=X_1^2+(1-X_1)X_2^4$	discussed in the
introduction.
The origin $\bfz$ is an isolated real critical point and degenerate.
The graphs of $f$ are shown in Figure~\ref{graph_ex1}. On the left hand side,
the graph is drawn with the variables $X_1, X_2$ varying in the range $[-2,2]$.
It seems from this graph that $\bfz$ is a saddle point. However,
if we zoom in, then we get the graph on the right hand side which indicates
that $\bfz$ is in fact a strict local minimizer.
Now we use Algorithm \ref{al::fr} to obtain a faithful radius of $\bfz$.
\begin{figure}
	\centering
	\caption{\label{graph_ex1} The graphs of $f$ in Example \ref{ex::ex1}.}
\scalebox{0.5}{
\includegraphics{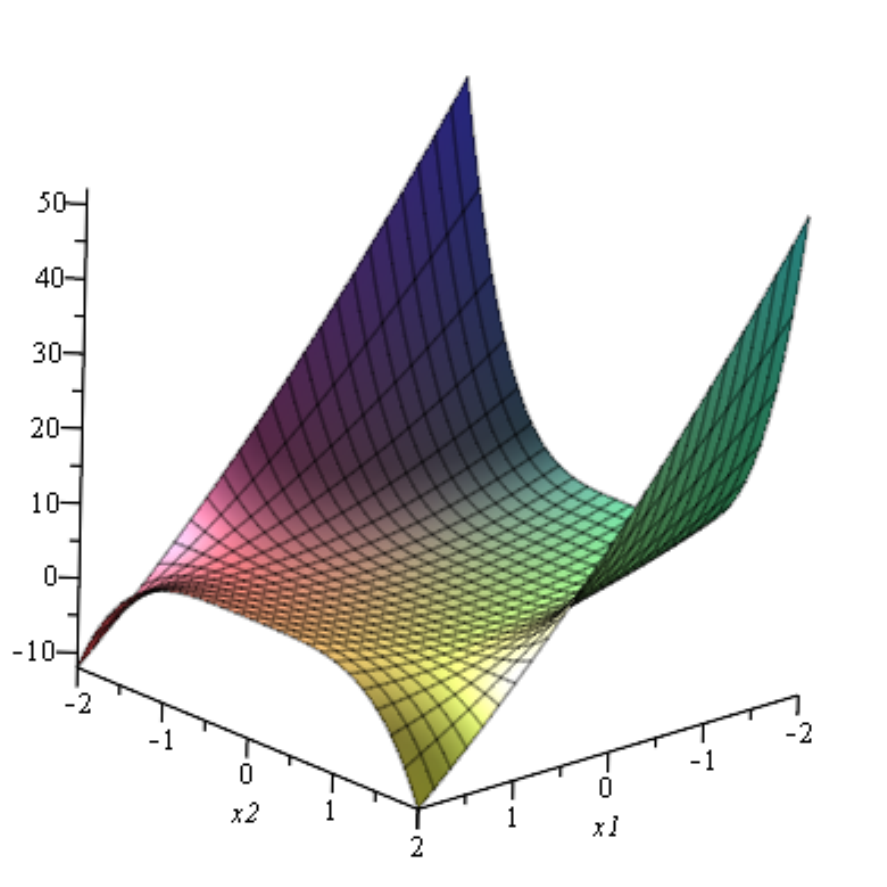}
\qquad\qquad
\includegraphics{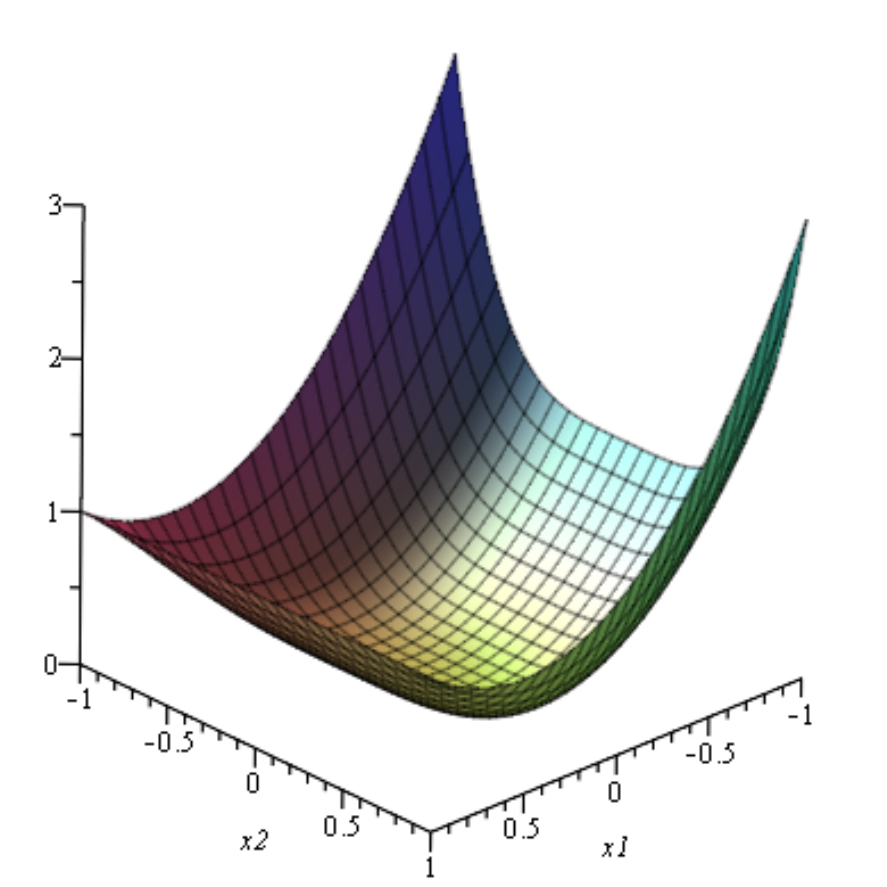}}
\end{figure}

It is easy to check that $R_{\text{\sf iso}}=1$ is an isolation radius of $\bfz$ and
$\boldsymbol{\gamma}=\{4X_1^2X_2^3-X_2^5-4X_1X_2^3+2X_1X_2\}$.
The curve of tangency $\Gamma_\RR(f)=\bfV_\RR(\langle\boldsymbol{\gamma}\rangle)$ is shown (red)
in Figure~\ref{curve_ex1}.
Since
$\dim(\langle\boldsymbol{\gamma}\rangle)=1$,
we let $I=\langle\boldsymbol{\gamma}\rangle$.
We implement Algorithm \ref{al::fr} in the software Maple.
With inputs $f$ and $R_{\text{\sf iso}}$, we get the return
$R=\frac{\sqrt{767451466998008631606300139861}}{1125899906842624}
\approx 0.778<1$.
The circle centered at $\bfz$ with radius $R$ is shown (blue)
in Figure \ref{curve_ex1}.
Hence, any $r<R$ is a faithful radius of $\bfz$.
\begin{figure}
	\centering
	\caption{\label{curve_ex1} The curve of tangency of $f$ in Example \ref{ex::ex1}.}
\scalebox{0.5}{
	\includegraphics{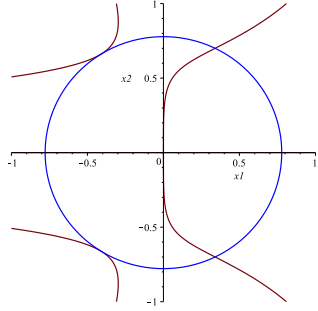}}
\end{figure}
\end{example}

\subsection{On the computation of isolation radius}

As we have seen, if an isolation radius of $\bfz$ is available,
a faithful radius of $\bfz$ can be obtained by Algorithm \ref{al::fr}.
To end this section, we propose some strategies to compute an isolation radius of $\bfz$.

Let $\mathscr{C}=
\langle\frac{\partial f}{\partial X_1},\ldots,\frac{\partial f}{\partial X_n}\rangle$.
If $\dim(\mathscr{C})=0$, then an isolation radius of $\bfz$ can be computed by the
RUR method \cite{Rouillier1999} for zero-dimensional systems.

Assume that $\dim(\mathscr{C})>0$.  We now borrow the idea from \cite{AUBRY2002543} which aims
to compute one point on each semi-algebraically connected component of a real algebraic
variety by the critical point method of a distance function.
Compute the equidimensional decomposition
$\mathscr{C}=\mathscr{C}^{(0)}\cap \mathscr{C}^{(1)}\cap\cdots\cap \mathscr{C}^{(t)}$
where $\mathscr{C}^{(k)}$ is radical and of dimension $k$ for each $k=0,\ldots,t$.
For an efficient algorithm of such decomposition, see \cite[Section 3]{AUBRY2002543}.
Then, we can compute the minimal distance $d^{(k)}$
of $\bfV_\RR(\mathscr{C}^{(k)})\backslash\{\bfz\}$ to
$\bfz$ for each $k$ and choose a positive number less than
the smallest one as an isolation radius.
Suppose that $\mathscr{C}^{(k)}=\langle h_1,\ldots,h_l\rangle$.
Let $\mathscr{M}(\mathscr{C}^{(k)})$ be the set of $h_1,\ldots,h_l$ and all the
$(n-k+1,n-k+1)$ minors of the Jacobian matrix
$\jac(h_{i_1},\ldots,h_{i_{n-k}},\Vert X\Vert_2^2)$
for all $\{i_1,\ldots,i_{n-k}\}\subset\{1,\ldots,l\}$.
Consider the map 	
\[
\begin{aligned}
	\Phi:\ \bfV_\CC(\mathscr{C}^{(k)})&\ \rightarrow\ &&\CC\\
	x&\ \mapsto \ &&x_1^2+\cdots+x_n^2.
\end{aligned}
\]
Then, $\bfV_\CC(\mathscr{M}(\mathscr{C}^{(k)}))$
consists of the singular locus $\sing(\mathscr{C}^{(k)})$ of
$\bfV_\CC(\mathscr{C}^{(k)})$ and the set of critical points of $\Phi$.
By the first part in the proof of \cite[Theorem 2.3]{AUBRY2002543},
the point of $\bfV_\RR(\mathscr{C}^{(k)})\backslash\{\bfz\}$ at the minimal distance to
$\bfz$ is contained in $\bfV_\RR(\mathscr{M}(\mathscr{C}^{(k)}))$.
Compute the elimination ideal
$\mathscr{M}_n(\mathscr{C}^{(k)})=\langle\mathscr{M}(\mathscr{C}^{(k)}),
\Vert X\Vert_2^2-X_{n+1}\rangle\cap\RR[X_{n+1}]$ and then we have
$(d^{(k)})^2\in\bfV_\RR(\mathscr{M}_n(\mathscr{C}^{(k)}))$. If
$\mathscr{M}_n(\mathscr{C}^{(k)})\neq\langle 0\rangle$,
then the smallest positive real root in $\bfV_\RR(\mathscr{M}_n(\mathscr{C}^{(k)}))$,
which can be obtained by any real root isolation algorithm for univariate polynomials,
is a lower bound of $(d^{(k)})^2$.
If $\mathscr{M}_n(\mathscr{C}^{(k)})=\langle 0\rangle$, by Sard's theorem, it happens
if and only if
the set $\Phi(\sing(\mathscr{C}^{(k)}))$ is infinite. In this case, we can replace
$\mathscr{C}$ by $\mathscr{M}(\mathscr{C}^{(k)})$ and repeat the above procedure recursively.
Since $\dim(\sing(\mathscr{C}^{(k)}))<k$, this process will finitely terminate and return
an isolation radius of $\bfz$.

Alternatively, when $\dim(\mathscr{C})>0$, we can compute an isolation radius of $\bfz$
by testing the emptyness of a real algebraic variety. Adding two new variables $X_{n+1}$
and $X_{n+2}$, a $R\in\RR_+$ is an isolation radius of $\bfz$ if and only if the following
polynomial system has no real root
\[
	\frac{\partial f}{\partial X_1},\ldots,\frac{\partial f}{\partial X_n},
	\Vert X\Vert_2^2+X_{n+1}^2-R^2, \Vert X\Vert_2^2\cdot X_{n+2}-1.
\]
Hence, we can set an initial $R$ and test the emptyness of the real algebraic variety
generated by the above polynomials. If it is empty, then $R$ is an isolation radius;
otherwise, try $R/2$ and repeat. For algorithms of such tests, see
\cite[Section 4]{AUBRY2002543} and \cite{SafeyElDin2003}.


\section{Certificates of types of degenerate critical points}\label{sec::types}

If $R\in\RR_+$ is a faithful radius of the isolated real critical point
$\bfz$ of $f$, then we can compute the extrema $f_R^{\min}$ and $f_R^{\max}$
in $(\ref{eq::ops})$ to classify the type of $\bfz$ by Theorem \ref{th::main}.
To deal with the issues when computing $f_R^{\min}$ and $f_R^{\max}$ as mentioned
in the introduction,
we next show that how to decide the type of $\bfz$ by means of real root
isolation of zero-dimensional polynomial systems. Recall the notation
$\mathsf{S}_r$ in (\ref{eq::BS}).
\begin{prop}\label{prop::connect2S}
	Suppose that $\mathscr{R}\in\RR_+$ satisfies Condition \ref{con::curve}
	and $0<R<\mathscr{R}$.
	Then for any $\bfz\neq u\in\Gamma_\RR(f)\cap\mathsf{B}_R$,
	there exists a continuous map
	$\varphi(t): [a,b]\rightarrow\Gamma_\RR(f)\cap\mathsf{B}_R$ with
	$\varphi(a)=u$, $\varphi(b)\in\mathsf{S}_R$ and $\bfz\not\in\varphi([a,b])$.
\end{prop}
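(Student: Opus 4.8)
The plan is to start from the point $u \in \Gamma_\RR(f) \cap \mathsf{B}_R$ and ``push it outward'' along the tangency variety until it hits the sphere $\mathsf{S}_R$, using Condition \ref{con::curve} to guarantee that locally $\Gamma_\RR(f)$ looks like a smooth curve on which the squared-distance function $\sum_i \phi_i^2$ has nonvanishing derivative. Concretely, I would first invoke Theorem \ref{th::main2} (valid since $0 < R < \mathscr{R}$ and we may assume $R$ is an isolation radius, or handle the general $R$ directly) to know that $\Gamma_\RR(f) \cap \mathsf{B}_R$ is connected, hence path connected by Proposition \ref{prop::connected}. So there is at least a continuous path from $u$ to $\bfz$, but I want a path going the \emph{other} way, toward the boundary, and avoiding $\bfz$.

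The key idea is a maximality/continuation argument. Consider the value $\rho := \sup\{\Vert \varphi(t)\Vert_2 \mid \varphi \text{ a continuous path in } \Gamma_\RR(f)\cap\mathsf{B}_R \text{ starting at } u, \ \bfz \notin \varphi([a,b])\}$. I claim $\rho = R$ and the supremum is attained. Suppose a path $\varphi \colon [a,b] \to \Gamma_\RR(f)\cap\mathsf{B}_R$ avoiding $\bfz$ ends at a point $w = \varphi(b)$ with $\Vert w\Vert_2 < R$. Then $w \neq \bfz$, so Condition \ref{con::curve} applies at $w$: there is a neighborhood $\mathcal{O}_w \subset \mathsf{B}_\mathscr{R}$, a differentiable map $\psi(s) \colon (a',b') \to \RR^n$ with $\psi((a',b')) = \Gamma_\RR(f) \cap \mathcal{O}_w$, $\psi(\bar{s}) = w$, and $\frac{\ud(\sum_i \psi_i^2)}{\ud s}(\bar{s}) \neq 0$. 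Because this derivative is nonzero, along $\psi$ the squared norm is strictly monotone near $\bar s$, so on one side of $\bar s$ the points of $\Gamma_\RR(f)\cap\mathcal{O}_w$ have strictly larger norm than $w$. Concatenating $\varphi$ with a short piece of $\psi$ in that direction (and shrinking $\mathcal{O}_w$ so it avoids $\bfz$, possible since $w \neq \bfz$) produces a longer admissible path reaching a point of norm $> \Vert w\Vert_2$, contradicting maximality at the level where $\Vert w\Vert_2 < R$. A standard compactness argument (paths parametrized on $[0,1]$, uniform limit along a maximizing sequence, using that $\Gamma_\RR(f)\cap\mathsf{B}_R$ is closed and that $\bfz$-avoidance is preserved because near $\bfz$ the norm is small) shows the sup is attained by some path $\varphi$ ending at $w^\ast$ with $\Vert w^\ast\Vert_2 = \rho$; by the above continuation step $\rho$ cannot be $< R$, so $\rho = R$ and $\varphi(b) = w^\ast \in \mathsf{S}_R$. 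Reparametrizing gives the desired $\varphi \colon [a,b] \to \Gamma_\RR(f)\cap\mathsf{B}_R$ with $\varphi(a) = u$, $\varphi(b) \in \mathsf{S}_R$, $\bfz \notin \varphi([a,b])$.

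The main obstacle I anticipate is making the $\bfz$-avoidance rigorous throughout the limiting argument: a maximizing sequence of paths could in principle ``pinch toward $\bfz$'' in the limit. The clean fix is to note that since $\bfz$ is an isolated real critical point and $R$ an isolation radius, the set $\Gamma_\RR(f) \cap \mathsf{B}_R \setminus \{\bfz\}$ is a one-dimensional semi-algebraic set (Theorem \ref{th::dim}, possibly after the generic linear change), and one can work in $\Gamma_\RR(f)\cap(\mathsf{B}_R \setminus \mathsf{B}_\delta)$ for small $\delta$, which is closed, bounded, and avoids $\bfz$ automatically; since $u \neq \bfz$ we have $\Vert u\Vert_2 > 0$ and may take $\delta < \Vert u\Vert_2$. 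Then the whole argument takes place in this $\bfz$-free compact semi-algebraic set, its connected component containing $u$ must meet $\mathsf{S}_R$ (otherwise it would be a component of $\Gamma_\RR(f)\cap\mathsf{B}_R$ separated from the rest, contradicting connectedness of $\Gamma_\RR(f)\cap\mathsf{B}_R$ once we let $\delta \to 0$, or more simply contradicting the continuation step above applied at its maximal-norm point), and path connectedness of that component (Proposition \ref{prop::connected}) delivers $\varphi$ directly. This reformulation sidesteps the delicate uniform-limit bookkeeping and reduces the proposition to Condition \ref{con::curve} plus the structural facts already established.
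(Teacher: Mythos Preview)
Your ``clean fix'' at the end is essentially the paper's own proof. The paper restricts to the closed annular slice
\[
S := \Gamma_\RR(f) \cap \{x \in \RR^n \mid \Vert u\Vert_2^2/2 \le \Vert x\Vert_2^2 \le R^2\},
\]
takes the connected component $\mathcal{C}$ of $S$ containing $u$, and observes that if $\mathcal{C}\cap\mathsf{S}_R=\emptyset$ then $\Vert X\Vert_2^2$ attains its maximum on the compact set $\mathcal{C}$ at some interior (in the radial sense) point, where Condition~\ref{con::curve} supplies a local curve along which the squared norm has nonzero derivative, a contradiction. Path connectedness of $\mathcal{C}$ (Proposition~\ref{prop::connected}) then gives the desired $\varphi$, and $\bfz\notin\varphi([a,b])$ is automatic because $\bfz\notin S$. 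This is precisely your second approach with the specific choice $\delta^2 = \Vert u\Vert_2^2/2$.

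Two remarks on the excess in your write-up. First, the path-maximization argument you start with is not just inelegant: the ``standard compactness argument'' for a maximizing sequence of paths is genuinely delicate (the space of continuous paths is not compact, and Arzel\`a--Ascoli would require equicontinuity you have not arranged), so you were right to abandon it. Second, the proposition does \emph{not} assume that $R$ is an isolation radius, and neither Theorem~\ref{th::main2} nor Theorem~\ref{th::dim} (nor any generic linear change) is needed; the annulus argument uses only Condition~\ref{con::curve} and the compactness of $\mathcal{C}$. Your invocation of connectedness of $\Gamma_\RR(f)\cap\mathsf{B}_R$ and the ``letting $\delta\to 0$'' alternative is also superfluous (and, as you half-noticed, not quite sound, since the component could connect to the rest only through $\bfz$); the maximal-norm-point contradiction alone suffices.
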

\begin{proof}
Consider the following semi-algebraic set
\[
S:=\Gamma_\RR(f)\cap\{x\in\RR^n\mid \Vert u\Vert_2^2/2\le \Vert x\Vert_2^2\le R^2\}.
\]
Let $\mathcal{C}$ be the connected component of $S$ containing $u$.
If $\mathcal{C}\cap\mathsf{S}_R\neq\emptyset$, then the conclusion
follows since $\mathcal{C}$ is path connected.
Otherwise, the function $\Vert X\Vert_2^2$ reaches its maximum on
$\cal{C}$ at a maximizer in $\cal{C}$. Then we can get a contradiction
using arguments similar to
the first part of the proof of Theorem \ref{th::main2}.
\end{proof}

For any $r\in\RR_+$, comparing with Corollary \ref{cor::uv}, define
\begin{equation}\label{eq::fbar}
f_r^{-}:=\min\{f(x)\mid x\in \Gamma_\RR(f)\cap \mathsf{S}_r\}
\quad\text{and}\quad
f_r^{+}:=\max\{f(x)\mid x\in \Gamma_\RR(f)\cap \mathsf{S}_r\}.
\end{equation}
\begin{theorem}\label{th::con}
	Suppose that $\mathscr{R}\in\RR_+$ satisfies Condition \ref{con::curve}.
	Then for any isolation radius $R<\mathscr{R}$,
	it holds that
\begin{enumerate}[\upshape (i)]
	\item $\bfz$ is a local minimizer if and only if $f_R^{-}>0$;
\item $\bfz$ is a local maximizer if and only if $f_R^{+}<0$;
\item $\bfz$ is a saddle point if and only if $f_R^{+}>0>f_R^{-}$.
\end{enumerate}
\end{theorem}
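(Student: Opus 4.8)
The plan is to reduce Theorem~\ref{th::con} to Theorem~\ref{th::main} by showing that, under Condition~\ref{con::curve} with an isolation radius $R<\mathscr{R}$, the sphere extrema $f_R^{-}$ and $f_R^{+}$ carry exactly the same sign information as the ball extrema $f_R^{\min}$ and $f_R^{\max}$. Since $R$ is an isolation radius below $\mathscr{R}$, Theorem~\ref{th::main2} tells us that $R$ is a faithful radius, so Theorem~\ref{th::main} already classifies $\bfz$ by the signs of $f_R^{\min}$ and $f_R^{\max}$. Moreover, by Corollary~\ref{cor::uv}, these extrema are attained on $\Gamma_\RR(f)\cap\mathsf{B}_R$. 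Hence it suffices to prove the sign equivalences $f_R^{\min}<0 \iff f_R^{-}<0$, $f_R^{\max}>0 \iff f_R^{+}>0$, together with $f_R^{\min}\le 0$ and $f_R^{\max}\ge 0$ always (the latter being immediate from $f(\bfz)=0$ and $\bfz\in\Gamma_\RR(f)\cap\mathsf{B}_R$).

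First I would handle the easy direction: since $\mathsf{S}_R\subseteq\mathsf{B}_R$ and $f$ takes the value $0$ at $\bfz\in\mathsf{B}_R$, we trivially have $f_R^{-}\ge f_R^{\min}$ is \emph{not} what we want — rather, $f_R^{-} \geq f_R^{\min}$ gives $f_R^{\min}\le 0 \le$ nothing useful; instead note directly $f_R^{\min}\le f(\bfz)=0\le f_R^{\max}$, and $f_R^{\min}\le f_R^{-}$, $f_R^{+}\le f_R^{\max}$ since the sphere is a subset of the ball. So if $f_R^{-}>0$ then $f_R^{+}\ge f_R^{-}>0$, and we must still rule out $f_R^{\min}<0$; and if $f_R^{+}<0$ similarly we must rule out $f_R^{\max}>0$. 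The substantive direction is therefore: if $f_R^{\min}<0$ then $f_R^{-}<0$, and symmetrically if $f_R^{\max}>0$ then $f_R^{+}>0$.

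The key tool is Proposition~\ref{prop::connect2S}. Suppose $f_R^{\min}<0$, attained (by Corollary~\ref{cor::uv}) at some $u\in\Gamma_\RR(f)\cap\mathsf{B}_R$ with $f(u)=f_R^{\min}<0$; in particular $u\neq\bfz$. Apply Proposition~\ref{prop::connect2S} to get a continuous path $\varphi:[a,b]\to\Gamma_\RR(f)\cap\mathsf{B}_R$ with $\varphi(a)=u$, $\varphi(b)\in\mathsf{S}_R$, and $\bfz\notin\varphi([a,b])$. Along this path, $f\circ\varphi$ is continuous, starts negative at $t=a$, and by the faithfulness of $R$ (condition (ii) of Definition~\ref{def::faithr}, which holds by Theorem~\ref{th::main2}) the function $f$ never vanishes on $\Gamma_\RR(f)\cap\mathsf{B}_R\setminus\{\bfz\}$; since the path avoids $\bfz$, the intermediate value theorem forces $f(\varphi(t))<0$ for all $t\in[a,b]$, in particular at $\varphi(b)\in\Gamma_\RR(f)\cap\mathsf{S}_R$, so $f_R^{-}\le f(\varphi(b))<0$. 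The argument for $f_R^{\max}>0\implies f_R^{+}>0$ is identical with signs reversed. Combining all the implications yields the three equivalences; e.g. $\bfz$ is a local minimizer $\iff f_R^{\max}>0$ and $f_R^{\min}=0$ (Theorem~\ref{th::main}) $\iff f_R^{\min}\ge 0$ with $f_R^{\max}>0$, and unwinding the sign equivalences this is $\iff f_R^{-}>0$ (using that $f_R^{-}>0$ already forces $f_R^{\max}\ge f_R^{+}\ge f_R^{-}>0$ and $f_R^{\min}\ge 0$). The maximizer and saddle cases follow symmetrically.

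I expect the only delicate point to be making the "$f$ does not vanish along $\varphi$" step fully rigorous: one must invoke that $R$ is a faithful radius (via Theorem~\ref{th::main2}, using both that $R<\mathscr{R}$ gives connectedness-type control through Condition~\ref{con::curve} and that $R$ is an isolation radius gives $\Gamma_\RR(f)\cap\bfV_\RR(f)\cap\mathsf{B}_R=\{\bfz\}$), and then combine this with the fact that $\varphi([a,b])$ lies in $\mathsf{B}_R$ and misses $\bfz$. Everything else is a short topological argument plus bookkeeping of signs, exactly parallel to the proof of Theorem~\ref{th::main}.
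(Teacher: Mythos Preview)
Your proposal is correct and follows essentially the same approach as the paper: both invoke Theorem~\ref{th::main2} to get that $R$ is faithful, then use Proposition~\ref{prop::connect2S} to push a point $u\in\Gamma_\RR(f)\cap\mathsf{B}_R$ with $f(u)<0$ out to $\mathsf{S}_R$ along a path avoiding $\bfz$, and use faithfulness (Definition~\ref{def::faithr}(ii)) together with the intermediate value theorem to control the sign of $f$ along that path. The only cosmetic difference is that the paper phrases the key step as a proof by contradiction (assume $f_R^{-}>0$ but $f_R^{\min}<0$, obtain a zero of $f$ on the path), whereas you argue the contrapositive directly; you should also make explicit that $f_R^{-}\ne 0$ (needed for the ``only if'' direction of (i)) follows immediately from $\Gamma_\RR(f)\cap\bfV_\RR(f)\cap\mathsf{B}_R=\{\bfz\}$ and $\bfz\notin\mathsf{S}_R$.
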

\begin{proof}
	By Theorem \ref{th::main2}, $R$ is a faithful radius of $\bfz$.
	According to Theorem \ref{th::main} and Definition \ref{def::faithr} (ii),
	the ``only if'' parts in (i), (ii) and the ``if'' part in (iii) are clear.
	
	(i). ``if'' part. Suppose that $f_R^->0$, then we have $f^{\min}_R=0$.
	Otherwise, by Corollary \ref{cor::uv}, there exists
	$\bfz\neq u\in\Gamma_\RR(f)\cap\mathsf{B}_R\backslash\mathsf{S}_R$ such
	that $f(u)<0$. By Proposition \ref{prop::connect2S},
	there exists a continuous map
	$\varphi(t): [a,b]\rightarrow\Gamma_\RR(f)\cap\mathsf{B}_R$ with
	$\varphi(a)=u$ and $\varphi(b)\in\mathsf{S}_R$. Then we have a continuous
	function $g(t):=f(\varphi(t)):[a,b]\rightarrow \RR$ such that
	$g(a)<0$ and $g(b)>0$. By the mean value theorem, there exists
	$\bar{t}\in(a,b)$ such that $g(\bar{t})=f(\varphi(\bar{t}))=0$.
	Since that $\bfz\neq\varphi(\bar{t})\in\Gamma_\RR(f)\cap\mathsf{B}_R$ by
	Proposition \ref{prop::connect2S} and $R$ is a faithful radius,
	we get a contradiction;

	Similarly, we can prove (ii) and then (iii) follows.
\end{proof}

%
For any $r\in\RR_+$, let $\mathsf{S}_{r,\CC}=\{x\in\CC^n\mid\sum_{i=1}^n x_i^2=r^2\}$.
Recall the definition $\mathscr{R}_I$ for an ideal $I$ in (\ref{eq::g}).
\begin{prop}\label{prop::cs}
	Given an ideal $I\subseteq\RR[X]$ with $\dim(I)=1$,
the system $\bfV_\CC(I)\cap\mathsf{S}_{R,\CC}$ is zero-dimensional
for any $0<R<\mathscr{R}_I$,
\end{prop}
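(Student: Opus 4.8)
The plan is to show that $\bfV_\CC(I) \cap \mathsf{S}_{R,\CC}$ is a finite set whenever $0 < R < \mathscr{R}_I$. First I would use the equidimensional decomposition $I = I^{(0)} \cap I^{(1)}$ with $\dim(I^{(i)}) = i$, which reduces the problem to the two pieces separately: $\bfV_\CC(I^{(0)}) \cap \mathsf{S}_{R,\CC}$ is automatically finite since $\bfV_\CC(I^{(0)})$ is already zero-dimensional, so the real content is to bound $\bfV_\CC(I^{(1)}) \cap \mathsf{S}_{R,\CC}$. Here $\bfV_\CC(I^{(1)})$ is a (pure) one-dimensional affine variety, and intersecting a curve with the hypersurface $\mathsf{S}_{R,\CC} = \{\sum x_i^2 = R^2\}$ generically drops the dimension to zero; the only obstruction is that an irreducible component of the curve could lie entirely inside $\mathsf{S}_{R,\CC}$, forcing the intersection to be one-dimensional.

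The key step is to rule out that obstruction using the hypothesis $R < \mathscr{R}_I$. Consider the map $\Phi \colon \bfV_\CC(I^{(1)}) \to \CC$, $x \mapsto x_1^2 + \cdots + x_n^2$ (the same $\Phi$ appearing in the proof of Lemma \ref{lem::cs}). An irreducible component $C$ of $\bfV_\CC(I^{(1)})$ is contained in $\mathsf{S}_{R,\CC}$ precisely when $\Phi$ is constant equal to $R^2$ on $C$; but then every point of $C$ is a critical point of $\Phi|_C$, so $C \subseteq \bfV_\CC(\Delta_{I^{(1)}})$ (recall $\bfV_\CC(\Delta_{I^{(1)}})$ contains the singular locus of $\bfV_\CC(I^{(1)})$ together with the critical locus of $\Phi$, as established in the proof of Lemma \ref{lem::cs}). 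In particular $\Phi$ takes the value $R^2$ on $\bfV_\CC(\Delta_{I^{(1)}})$, which by the definition of $\mathscr{R}_{I^{(1)}}$ in (\ref{eq::g}) forces $R \ge \mathscr{R}_{I^{(1)}} \ge \mathscr{R}_I$, contradicting $R < \mathscr{R}_I$. Hence no component of $\bfV_\CC(I^{(1)})$ lies in $\mathsf{S}_{R,\CC}$.

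It remains to conclude finiteness. Since $\bfV_\CC(I^{(1)})$ is a curve and $\mathsf{S}_{R,\CC}$ is an irreducible hypersurface not containing any component of that curve, for each irreducible component $C$ the intersection $C \cap \mathsf{S}_{R,\CC}$ is a proper closed subvariety of the one-dimensional $C$, hence finite; taking the union over the finitely many components gives that $\bfV_\CC(I^{(1)}) \cap \mathsf{S}_{R,\CC}$ is finite. Combined with the zero-dimensionality of the $I^{(0)}$-part, $\bfV_\CC(I) \cap \mathsf{S}_{R,\CC} = \bigl(\bfV_\CC(I^{(0)}) \cap \mathsf{S}_{R,\CC}\bigr) \cup \bigl(\bfV_\CC(I^{(1)}) \cap \mathsf{S}_{R,\CC}\bigr)$ is finite, i.e. zero-dimensional.

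The main obstacle I anticipate is the bookkeeping around the structure of $\bfV_\CC(\Delta_{I^{(1)}})$: one needs the fact (already used in the proof of Lemma \ref{lem::cs} under the assumption that $\langle g_1,\dots,g_s\rangle = \sqrt{I^{(1)}}$ is radical and equidimensional) that the minors in $\mathscr{D}$ together with the $g_i$ correctly cut out the singular locus of the curve plus the critical points of $\Phi$; once that is in hand, the dimension count is routine. One should also be slightly careful that $\mathscr{R}_{I^{(1)}}$ is defined as an infimum and $\mathscr{R}_{I^{(0)}}$ as a minimum, but Lemma \ref{lem::cs} guarantees $\mathscr{R}_I > 0$ and that $\Phi(\bfV_\CC(\Delta_{I^{(1)}}))$ is finite, so the infimum over nonzero radii is attained or irrelevant — the strict inequality $R < \mathscr{R}_I$ is exactly what is needed.
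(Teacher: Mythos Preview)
Your proposal is correct and follows essentially the same approach as the paper's proof: reduce to the one-dimensional piece $I^{(1)}$, argue by contradiction that no irreducible component of $\bfV_\CC(I^{(1)})$ can lie inside $\mathsf{S}_{R,\CC}$ because such a component would sit inside $\bfV_\CC(\Delta_{I^{(1)}})$ and force $R\ge\mathscr{R}_I$, and then conclude finiteness by a dimension drop (the paper cites Krull's Principal Ideal Theorem where you simply note that a proper closed subvariety of an irreducible curve is finite). The paper makes the tangent-space bookkeeping more explicit---it first argues $V_i\cap V_j=\emptyset$, so that at every nonsingular point of $V_i$ the tangent spaces of $V_i$ and of $\bfV_\CC(I^{(1)})$ coincide, and then checks directly that the differential of $\|X\|_2^2-R^2$ lies in the span of the $dg_j$, forcing the determinants in $\mathscr{D}$ to vanish---whereas you package this as ``$\Phi$ constant on $C$ $\Rightarrow$ every point of $C$ is a critical point of $\Phi$ or a singular point of $\bfV_\CC(I^{(1)})$''; both arrive at $C\subseteq\bfV_\CC(\Delta_{I^{(1)}})$.
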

\begin{proof}
It only needs to prove that $\bfV_\CC(I^{(1)})\cap\mathsf{S}_{R,\CC}$ is
zero-dimensional.
Let $V_1\cup\cdots\cup V_s$ be the decomposition of
$\bfV_\CC(I^{(1)})$ as a union of irreducible components.
Fix an $1\le i\le s$.
If $V_i\cap\mathsf{S}_{R,\CC}\neq\emptyset$,
then we show that $V_i\not\subseteq\mathsf{S}_{R,\CC}$.
To the contrary, assume that $V_i\subseteq\mathsf{S}_{R,\CC}$.
By the definition of $\mathscr{R}_I$ and the proof of Lemma \ref{lem::cs},
we have $V_i\cap V_j=\emptyset$ for each $j\neq i$
since $V_i\cap V_j$ is contained in the singular locus of
$\bfV_\CC(I^{(1)})$.
Then for any point $p$ in the nonsingular part of $V_i$, it holds that
$T_p(V_i)=T_p(\bfV_\CC(I^{(1)}))$ where $T_p(V_i)$
and $T_p(\bfV_\CC(I^{(1)}))$ denote the tangent spaces of $V_i$ and
$\bfV_\CC(I^{(1)})$ at $p$, respectively.
Then,
for any $h\in{\bf I}(V_i)$, the differiential of $h$ at $p$ can be expressed as
a linear combination of the differientials of $g_1,\ldots,g_s$ (the
generators of $\sqrt{I^{(1)}}$) at $p$.
In particular, by the assumption that $V_i\subseteq\mathsf{S}_{R,\CC}$, it holds for
$h:=\Vert X\Vert_2^2-R^2\in{\bf I}(V_i)$.
Hence, all the determinants in the set $\mathscr{D}$ in (\ref{eq::g}) vanish at any $p\in V_i$
since $\dim(I^{(1)})=1$.
Consequently, we have
$p\in\bfV_\CC(\Delta_{I^{(1)}})$. By the definition,
it implies that $R\ge\mathscr{R}_I$ which
is a contradiction. Therefore, if $V_i\cap\mathsf{S}_{R,\CC}\neq\emptyset$,
then $V_i\not\subseteq\mathsf{S}_{R,\CC}$ and
$\dim(V_i\cap\mathsf{S}_{R,\CC})=\dim(V_i)-1=0$
by Krull's Principal Ideal Theorem \cite[Chap. V, Corollary 3.2]{kunz}.
The conclusion follows.
\end{proof}

Recall the definition of $\boldsymbol{\gamma}$ and Algorithm \ref{al::fr}.
We now give an algorithm to decide the type of the isolated real critical point $\bfz$
of $f$.

\begin{framed}
\begin{algorithm}\label{al::main}{\sf
\noindent Type($f,R_{\text{\sf iso}}$)\\
Input: A polynomial $f\in\RR[X]$ with $\bfz$ as an isolated real
critical point and an isolation radius $R_{\text{\sf iso}}$ of $\bfz$. \\
Output: The type of $\bfz$ as a critical point of $f$.
\begin{enumerate}[\upshape 1.]
	\item If $\dim(\langle\boldsymbol{\gamma}\rangle)=1$,
		then let $I=\langle\boldsymbol{\gamma}\rangle$;
		otherwise, make a linear change
		of coordinates of $f$ such that $\dim(\mathcal{G})=1$ and let
		$I=\mathcal{G}$;
	\item Let $R=\text{\sf FaithfulRadius}(f,R_{\text{\sf iso}})$ and fix a
		radius $0<r<R$;
	\item Let $\bar{I}=I+\langle\Vert X\Vert_2^2-r^2,f-X_{n+1}\rangle\subseteq\RR[X,X_{n+1}]$;
  \item Compute intervals $\{[a_i,b_i]\mid i=1,\ldots,s\}$ such that
	  $0\not\in[a_i,b_i]$ for each $i$ and the coordinate $X_{n+1}$
	  of every point in $\bfV_\RR(\bar{I})\subseteq\RR^{n+1}$ lies in some unique $[a_i,b_i]$;
  \item Let $m=\min\{a_i, i=1,\ldots,s\}$ and $M=\max\{b_i,i=1,\ldots,s\}$;
  \item If $m>0$, return ``local minimizer''; if $M<0$, return
	  ``local maximizer''; if $m<0<M$, return ``saddle point''.
\end{enumerate}}
\end{algorithm}
\end{framed}

\begin{theorem}\label{}
Algorithm \ref{al::main} runs successfully and is correct.
\end{theorem}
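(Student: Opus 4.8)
The plan is to verify each step of Algorithm \ref{al::main} terminates and that the returned label matches the type of $\bfz$. The engine behind correctness is Theorem \ref{th::con}: once we produce an honest faithful radius $r$, the sign pattern of $f$ on $\Gamma_\RR(f)\cap\mathsf{S}_r$ determines the type. So the argument has two halves: (a) show every sub-computation is well-posed (the relevant ideals are zero-dimensional so real root isolation applies), and (b) show $m$ and $M$ computed in Steps 5--6 carry exactly the sign information of $f_r^{-}$ and $f_r^{+}$ from \eqref{eq::fbar}.

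First I would handle Step 1 and Step 2. By Corollary \ref{cor::dim1} a generic linear change of coordinates achieves $\dim(\mathcal{G})=1$, so $I$ with $\dim(I)=1$ and $\Gamma_\RR(f)\cap\mathsf{B}_{\mathscr R}=\bfV_\RR(I)\cap\mathsf{B}_{\mathscr R}$ for small $\mathscr R$ is available (using Proposition \ref{prop::r} when $I=\mathcal G$, or trivially when $I=\langle\boldsymbol\gamma\rangle$). That $\text{\sf FaithfulRadius}(f,R_{\text{\sf iso}})$ runs successfully and outputs $R$ with every $0<r<R$ a faithful radius was already proved for Algorithm \ref{al::fr}; in particular $r<\mathscr R_I$ and $r<R_{\text{\sf iso}}$, so Condition \ref{con::curve} holds for some $\mathscr R>r$ and $r$ is an isolation radius, which is exactly the hypothesis of Theorem \ref{th::con}. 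Next, for Step 3--4: the ideal $\bar I = I+\langle\Vert X\Vert_2^2-r^2, f-X_{n+1}\rangle$ cuts out $\{(x,f(x))\mid x\in\bfV_\RR(I)\cap\mathsf{S}_r\}$. Since $r<\mathscr R_I$, Proposition \ref{prop::cs} gives that $\bfV_\CC(I)\cap\mathsf{S}_{r,\CC}$ is zero-dimensional, hence $\bfV_\CC(\bar I)\subseteq\CC^{n+1}$ is zero-dimensional as well (the last coordinate is a polynomial function of the first $n$). Therefore the real root isolation black box applies and produces the intervals $[a_i,b_i]$ separating the $X_{n+1}$-coordinates; that $0\notin[a_i,b_i]$ can be enforced because $\bfV_\RR(I)\cap\mathsf{S}_r\subseteq\Gamma_\RR(f)\cap\mathsf{S}_r$ contains no zero of $f$ — indeed any such point would lie in $\Gamma_\RR(f)\cap\bfV_\RR(f)\cap\mathsf{B}_r=\{\bfz\}$ by faithfulness, but $\bfz\notin\mathsf{S}_r$.

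Then I would tie the combinatorial output to the analytic quantities. Because $r$ is an isolation radius with Condition \ref{con::curve}, Corollary \ref{cor::uv} and the connectedness of $\Gamma_\RR(f)\cap\mathsf{B}_r$ let one replace balls by the tangency variety, and Proposition \ref{prop::connect2S} pushes extrema out to the sphere; concretely $f_r^{-}=\min\{f(x)\mid x\in\bfV_\RR(I)\cap\mathsf{S}_r\}$ and $f_r^{+}=\max\{f(x)\mid x\in\bfV_\RR(I)\cap\mathsf{S}_r\}$ using $\Gamma_\RR(f)\cap\mathsf{S}_r=\bfV_\RR(I)\cap\mathsf{S}_r$. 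Since $X_{n+1}=f$ on $\bfV_\RR(\bar I)$, the set of real $X_{n+1}$-values is exactly $\{f(x)\mid x\in\bfV_\RR(I)\cap\mathsf{S}_r\}$, so $m=\min_i a_i\le f_r^{-}$ and $M=\max_i b_i\ge f_r^{+}$, while both $m$ and $M$ have the same sign as $f_r^{-}$ and $f_r^{+}$ respectively because each isolating interval avoids $0$ and $m,M$ are realized near actual roots. Hence $m>0\iff f_r^->0$, $M<0\iff f_r^+<0$, and $m<0<M\iff f_r^+>0>f_r^-$; invoking Theorem \ref{th::con} finishes correctness. If $\bfV_\RR(\bar I)=\emptyset$ one notes $\bfz\in\Gamma_\RR(f)$ but $\bfz\notin\mathsf{S}_r$, so this does not occur — and even if the real variety were empty, Corollary \ref{cor::notiso} guarantees nonzero tangency points at every small radius, so the set is nonempty.

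The main obstacle I anticipate is the sign-bookkeeping in the last paragraph: one must be careful that $m$ and $M$, which are endpoints of \emph{isolating} intervals rather than the roots themselves, nonetheless report the correct sign of the extreme root — this works precisely because $0$ is separated away from every interval, so each interval lies entirely in $(-\infty,0)$ or $(0,\infty)$, and the extreme endpoints therefore share the sign of the extreme roots. A secondary technical point is confirming $\bfV_\CC(\bar I)$ is genuinely zero-dimensional (not just $\bfV_\CC(I)\cap\mathsf{S}_{r,\CC}$); this is immediate since adjoining $f-X_{n+1}$ to a zero-dimensional system keeps it zero-dimensional. Everything else is assembling already-proved statements (Theorems \ref{th::con}, \ref{th::main2}, Propositions \ref{prop::cs}, \ref{prop::connect2S}, \ref{prop::r}, Corollaries \ref{cor::uv}, \ref{cor::dim1}, \ref{cor::notiso}) in sequence.
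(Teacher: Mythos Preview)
Your proposal is correct and follows essentially the same approach as the paper: both argue that $\bar I$ is zero-dimensional via Proposition~\ref{prop::cs}, that the $X_{n+1}$-coordinates of $\bfV_\RR(\bar I)$ recover $f_r^{-}$ and $f_r^{+}$ via Proposition~\ref{prop::r} and Algorithm~\ref{al::fr}, and that correctness then follows from Theorem~\ref{th::con}. Your version is simply more detailed, spelling out the sign-bookkeeping for the isolating intervals and the non-emptiness of $\bfV_\RR(\bar I)$ that the paper's terse proof leaves implicit.
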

\begin{proof}
	By Proposition \ref{prop::r} and Algorithm \ref{al::fr},
$f^-_{r}$ and $f^+_{r}$ respectively equal the minimal and maximal
coordinates $X_{n+1}$ of the points in $\bfV_\RR(\bar{I})$.
Since $\bar{I}$ is zero-dimensional by Proposition \ref{prop::cs} and
$f^-_r, f^+_r$ are nonzero by Theorem \ref{th::con}, the isolation
intervals $[a_i,b_i]$'s in step 5 can be obtained. Again, by Theorem \ref{th::con},
the outputs of Algorithm is correct.
\end{proof}

\paragraph{\bf Example \ref{ex::ex1} continued}
{\itshape We have shown that any
$0<r<R \approx 0.778<1$ is a faithful radius of $\bfz$.
We set $r=\frac{7}{18}$ in Step 2 of Algorithm \ref{al::main}.
In Step 4, by the command {\sf Isolate} in Maple which uses RUR method
\cite{Rouillier1999} for zero-dimensional system,
we obtain that $m=\frac{76810939241945}{562949953421312}$
and $M=\frac{437849963772149}{1125899906842624}$ in Step 5.
Therefore, we can claim that $\bfz$ is a local minimizer of $f$ by Step 6
of Algorithm \ref{al::main}. }
\vskip 8pt


%
%


\begin{example}
	Consider the following polynomial $($cf. \cite{Nie2015,ADPO}$)$
\[
	\begin{aligned}
		f(X_1,X_2,X_3)=&47X_1^5+5X_1X_2^4+33X_3^5-95X_1^4-47X_1X_3^3+51X_2^2X_3^2-92X_1X_3^2
		-70X_2^2X_3+21X_2^2.
	\end{aligned}
\]
It can be checked that $\bfz$ is a degenerate critical point and moreover
the set $\crit_\CC(f)$ is zero-dimensional. Hence, $\bfz$ is isolated real critical point.
Using the command {\sf Isolate} in Maple, we get an isolation
radius $R_{\text{\sf iso}}=\frac{70375577207295}{140737488355328}\approx 0.50$.
Running Algorithm \ref{al::fr} with inputs $f$ and $R_{\text{\sf iso}}$,
we get the output $R=\frac{459690419250099}{4503599627370496}\approx 0.102$.
Setting $r=\frac{2}{39}$ in Step 2 of Algorithm \ref{al::main}, we obtain
$m=-\frac{428092208351331}{2251799813685248}$ and $M=\frac{13589527442797}{70368744177664}$.
Thus, $\bfz$ is a saddle point of $f$. In fact, it can be certified by
letting $X_1=X_2=0$ in $f$.
\end{example}

\begin{example}
	Consider the polynomial $f(X_1,X_2,X_3)=X_1^2+X_2^4+X_3^4-4X_1X_2X_3$ with
	$\bfz$ as a degenerate critical point.  It is shown in \cite{Cushing1975} that
	the method proposed therein fails to test the type of $\bfz$.
	For any $\varepsilon>0$, we have $f(\varepsilon^2,\varepsilon,\varepsilon)<0$ and
	$f(-\varepsilon,\varepsilon,\varepsilon)>0$.
	Letting $\varepsilon\rightarrow 0$, we get that $\bfz$ is a saddle point of $f$.
	Since the set $\crit_\CC(f)=\{\bfz\}$, we set an isolation radius $R_{\text{\sf iso}}=1$.
Running Algorithm \ref{al::fr} with inputs $f$ and $R_{\text{\sf iso}}$,
we get the output $R=\frac{1}{2}$.
Setting $r=\frac{1}{4}$ in Step 2 of Algorithm \ref{al::main}, we obtain
$m=-\frac{90700979328567}{9007199254740992}$ and $M=\frac{5629499534213}{35184372088832}$.
Thus, we can detect that $\bfz$ is a saddle point of $f$.
\end{example}

To conclude this section,
we would like to point out that the cost of running Algorithms \ref{al::fr} and
\ref{al::main} can be reduced if some factor decomposition of $f$ is available.
The following propositions show that the problem of classifying the isolated
real critical point $\bfz$ of $f$ reduces to the case when
$f$ is square-free and each of its factors vanishes at $\bfz$.
\begin{prop}\label{prop::sf}
Suppose $f(X)=g(X)h(X)^2$ where $g(X),h(X)\in\RR[X].$
Then,
\begin{enumerate}[{Case} 1.]
\item $g(\bfz)\neq 0$. If $g_0>0$, then $\bfz$ is a local minimizer of $f(X)$;
otherwise, $\bfz$ is a local maximizer. Here, $g_0$ denotes the constant term of
$g(X)$.
\item $g(\bfz)=0$. If $\nabla g(\bfz)\neq\bfz$, then $\bfz$ is a saddle point of $f(X)$;
otherwise, $\bfz$ is of the same type as a common critical point of $f(X)$ and $g(X)$.
\end{enumerate}
\end{prop}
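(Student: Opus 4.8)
The plan is to funnel all three cases of the statement through a single elementary observation. First I would record the standing consequences: since $\bfz$ is isolated in $\crit_\RR(f)$ we have $f\not\equiv 0$, hence $g\not\equiv 0$ and $h\not\equiv 0$, and in particular the set $\{x\in\RR^n\mid h(x)\neq 0\}$ is open and dense; also $f(\bfz)=0$ by the standing normalization. The key lemma I would prove is a sign comparison between $f=gh^2$ and $g$ near an arbitrary point $p\in\RR^n$: $f\ge 0$ on some neighborhood of $p$ if and only if $g\ge 0$ on some neighborhood of $p$; the same with ``$\le$'' in place of ``$\ge$''; and $f$ attains values of both strict signs in every neighborhood of $p$ if and only if $g$ does. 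One direction of each equivalence is immediate from $h^2\ge 0$. For the converses I would argue by density: if, say, $g(q)<0$ for some $q$ near $p$, then $g<0$ on a small ball about $q$, that ball meets $\{h\neq 0\}$, and at such a point $f=gh^2<0$; symmetrically for the positive sign. Combining this lemma with $f(\bfz)=0$ (and with $f\not\equiv 0$, which makes the three possibilities mutually exclusive) yields the classification I really want: $\bfz$ is a local minimizer of $f$ iff $g\ge 0$ on a neighborhood of $\bfz$; a local maximizer of $f$ iff $g\le 0$ on a neighborhood of $\bfz$; and a saddle point of $f$ iff $g$ attains both strict signs in every neighborhood of $\bfz$.

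With this classification, Case 1 is immediate: if $g(\bfz)=g_0\neq 0$, then by continuity $g$ has the sign of $g_0$ on a neighborhood of $\bfz$, so $\bfz$ is a local minimizer of $f$ when $g_0>0$ and a local maximizer of $f$ when $g_0<0$.

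For Case 2, $g(\bfz)=0$, I would split on $\nabla g(\bfz)$. If $\nabla g(\bfz)\neq\bfz$, then along the segment $t\mapsto t\,\nabla g(\bfz)$ we have $g\big(t\,\nabla g(\bfz)\big)=t\,\Vert\nabla g(\bfz)\Vert_2^2+O(t^2)$, whose sign equals that of $t$ for $0<|t|\ll 1$; hence $g$ attains both strict signs in every neighborhood of $\bfz$, and by the classification $\bfz$ is a saddle point of $f$. If instead $\nabla g(\bfz)=\bfz$, then $\bfz$ is a critical point of $g$, hence a common critical point of $f$ and $g$. Since $g\not\equiv 0$ and $g(\bfz)=0$, exactly one of ``$g\ge 0$ near $\bfz$'', ``$g\le 0$ near $\bfz$'', ``$g$ attains both strict signs in every neighborhood of $\bfz$'' holds, and these say precisely that $\bfz$ is a local minimizer, a local maximizer, or a saddle point of $g$. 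By the classification above, the very same alternative describes the type of $\bfz$ as a critical point of $f$; hence $\bfz$ has the same type for $f$ as it does for $g$, which is the claim.

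The only genuinely delicate point I foresee is the converse half of the sign-comparison lemma: one must exclude the possibility that $h$ vanishes throughout the open region on which $g$ has a prescribed sign, and this is exactly where density of $\{h\neq 0\}$ (equivalently $h\not\equiv 0$, which follows from $f\not\equiv 0$) enters. A secondary, purely formal point is checking that the three alternatives used throughout are mutually exclusive and exhaustive for $g$ (and for $f$), which reduces to the fact that a polynomial vanishing on a nonempty open set is identically zero.
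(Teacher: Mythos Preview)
Your proof is correct and follows essentially the same approach as the paper: both arguments rest on comparing the sign of $f=gh^2$ with that of $g$ near $\bfz$, using that $\{h\neq 0\}$ is dense (equivalently, $\dim h^{-1}(0)<n$) to pass from strict sign information on $g$ to strict sign information on $f$. The only difference is organizational: you extract this sign-comparison as a single lemma and apply it uniformly to all three cases, whereas the paper argues each case separately, invoking the density of $\{h\neq 0\}$ explicitly only in the saddle case.
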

\begin{proof}
If $g(\mathbf{0}) \ne 0$ then there exists an open neighbourhood $U \subset \mathbb{R}^n$ of $\mathbf{0}$ such that either $g > 0$ or $g < 0$ on $U.$
This implies easily that $\mathbf{0}$ is a local minimizer or maximizer of $f.$

Assume that $g(\mathbf{0}) =  0$ and $\nabla g(\mathbf{0}) \ne \mathbf{0}.$
We have that $\mathbf{0}$ is not a local extremal point of $g.$
Consequently, for any open neighbourhood $U \subset \mathbb{R}^n$ of $\mathbf{0}$,
there exist points $u, v \in U$ such that $g(u) < 0 < g(v).$
On the other hand, the algebraic set $h^{-1}(0)$ has dimension $< n.$
By the continuity of $g$, we may assume that $u, v \not \in h^{-1}(0).$
Therefore,  $f(u) < 0 < f(v),$ and so $\mathbf{0}$ is a saddle point of $f.$

Finally, suppose that $g(\mathbf{0}) = 0$ and $\nabla g(\mathbf{0}) = \mathbf{0}.$
Because for any $x \in \mathbb{R}^n$ with $f(x) \neq 0$,
$f(x)$ and $g(x)$ have the same sign,
$\mathbf{0}$ is of the same type as a common critical point of $f$ and $g.$
\end{proof}

\begin{prop}
Suppose that $\bfz$ is a critical point of $f\in\RR[X]$ and
$f=f_1\cdot f_2$ where $f_1,f_2\in\RR[X]$. If $f_1(\bfz)\neq 0$,
then
\begin{enumerate}[\upshape (i)]
\item $\bfz$ is a saddle point of $f$ if and only if $\bfz$ is
a saddle point of $f_2$;
\item If $f_1(\bfz)>0$ $(f_1(\bfz)<0$, resp.$)$,
	then $\bfz$ is a minimizer of $f$ if
and only if $\bfz$ is a minimizer (maximizer, resp.) of $f_2$.
\end{enumerate}
\end{prop}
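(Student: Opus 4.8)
The plan is to reduce the statement to a purely local comparison of signs of $f$ and $f_2$, exploiting that $f_1$ is continuous and does not vanish at $\bfz$.

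First I would record the elementary consequences of the hypotheses. Since (as throughout the paper) $f(\bfz)=0$, we have $f_1(\bfz)f_2(\bfz)=0$, and $f_1(\bfz)\neq 0$ forces $f_2(\bfz)=0$. Differentiating the product gives $\nabla f=f_2\,\nabla f_1+f_1\,\nabla f_2$; evaluating at $\bfz$ and using $f_2(\bfz)=0$ together with $\nabla f(\bfz)=\bfz$ yields $f_1(\bfz)\nabla f_2(\bfz)=\bfz$, hence $\nabla f_2(\bfz)=\bfz$. Thus $\bfz$ is a critical point of $f_2$ with $f_2(\bfz)=0$, so it is meaningful to speak of its type as a critical point of $f_2$.

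Next, by continuity there is an open neighbourhood $U$ of $\bfz$ on which $f_1$ never vanishes, and hence keeps the constant sign $\varepsilon:=\operatorname{sign}f_1(\bfz)\in\{+1,-1\}$ on $U$. Then for every $x\in U$ the value $f(x)=f_1(x)f_2(x)$ has the sign $\varepsilon\cdot\operatorname{sign}f_2(x)$; in particular, when $\varepsilon=+1$ the functions $f$ and $f_2$ are simultaneously $\ge 0$, simultaneously $\le 0$, and simultaneously vanish on $U$, while when $\varepsilon=-1$ the same holds with $f_2$ replaced by $-f_2$. Since $f(\bfz)=f_2(\bfz)=0$ and since any neighbourhood of $\bfz$ may be shrunk inside $U$ before the comparison, the defining inequalities of a local extremum transfer directly: for $\varepsilon=+1$, ``$f\ge 0$ near $\bfz$'' is equivalent to ``$f_2\ge 0$ near $\bfz$'' and ``$f\le 0$ near $\bfz$'' to ``$f_2\le 0$ near $\bfz$'', so $\bfz$ is a local minimizer (resp.\ maximizer) of $f$ iff it is a local minimizer (resp.\ maximizer) of $f_2$; for $\varepsilon=-1$ the roles of minimizer and maximizer are interchanged. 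This is exactly assertion (ii), together with its maximizer counterpart.

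Finally, (i) is immediate from the above. Being a saddle point of $f$ means being neither a local minimizer nor a local maximizer of $f$, and by the equivalences just proved --- for either value of $\varepsilon$, since the two cases only swap the words ``minimizer'' and ``maximizer'' --- this holds precisely when $\bfz$ is neither a local minimizer nor a local maximizer of $f_2$, i.e.\ when $\bfz$ is a saddle point of $f_2$. I do not expect a genuine obstacle here: the only points requiring care are the bookkeeping of the sign $\varepsilon$ and the reminder that one must intersect with $U$ before comparing $f$ with $f_2$; once $f_2(\bfz)=0$ and $\nabla f_2(\bfz)=\bfz$ are in hand, the rest is a direct translation of definitions.
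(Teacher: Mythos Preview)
Your proof is correct and follows essentially the same approach as the paper: the paper's argument consists of the single line ``Since $f_2(\bfz)=0$ and $\nabla f_2(\bfz)=\bfz$, the conclusion is clear,'' and your write-up is a careful unpacking of exactly this, establishing the two facts via the product rule and then translating the local sign comparison through a neighbourhood on which $f_1$ keeps constant sign.
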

\begin{proof}
Since $f_2(\bfz)=0$ and $\nabla f_2(\bfz)=\bfz$, the conclusion is clear.	
\end{proof}

\section{Conclusions}\label{sec::cons}
We proposed a computable and symbolic method to determine the type of
a given isolated real critical point, which is degenerate, of a multivariate
polynomial function. Given an isolation radius of the critical point,
the tangency variety of the polynomial function at the
critical point is used to define and compute its faithful radius.
Elimination ideals and root isolation of univariate polynomials are computed in finding
a faithful radius.
Once a faithful radius of the critical point is known, its type can be determined
by isolating the real roots of a zero-dimensional polynomial system.

\end{document}